\newtheorem{theorem}{Theorem}[section]
\newtheorem{corollary}[theorem]{Corollary}
\newtheorem{proposition}[theorem]{Proposition}
\newtheorem{definition}[theorem]{Definition}
\newtheorem{remark}[theorem]{Remark}
\newtheorem{problem}{Problem}
\theoremstyle{remark}
\newtheorem{example}[theorem]{Example}
\numberwithin{equation}{section}
\DeclareMathOperator{\dom}{Dom}
\DeclareMathOperator{\col}{col}
\def\R{\mathbb R}
\def\C{\mathbb C}
\def\N{\mathbb N}
\def\Z{\mathbb Z}
\def\wt{\widetilde}
\def\vphi{\varphi}
\def\ga{\gamma}
\def\X{\mathcal{X}}
\def\B{\mathcal{B}}
\def\l{\ell}
\def\L{\mathcal{L}}
\def\W{\mathcal{W}}
\def\V{\mathcal{U}}
\def\I{I}
\def\Pr{P}
\def\Ssp{\mathcal{S}}
\def\vz{0}
\def\Sh{S}
\def\A{A}
\def\E{E}
\def\h{h}
\def\g{\mathfrak{g}}
\def\e{\mathfrak{e}}
\def\fz{\mathbf{0}}
\def\s{s}
\def\fin{\mathrm{fin}}
\def\subd{\mathrm{subd}}
\begin{document}
\thispagestyle{empty}
\title{Bohl-Perron type stability theorems for linear
difference equations with infinite delay}

\author{Elena Braverman and Illya M. Karabash}


\maketitle

\begin{abstract}
Relation between two properties of
linear difference equations with infinite delay is investigated: (i)
exponential stability, (ii) $\l^p$-input $\l^q$-state stability 
(sometimes is called Perron's property). The latter means that
solutions of the non-homogeneous equation with zero initial data
belong to $\l^q$ when non-homogeneous terms are in $\l^p$.
It is assumed that at each moment the prehistory (the sequence of preceding states) belongs
to some weighted $\l^r$-space with an exponentially fading weight (the phase space).

Our main result states that (i) $\Leftrightarrow$ (ii) whenever $(p,q) \neq (1,\infty)$
and a certain boundedness condition on coefficients is fulfilled. This condition is sharp and
ensures that, to some extent, exponential and $\l^p$-input $\l^q$-state stabilities does not depend 
 on the choice of a phase space and parameters $p$ and $q$, respectively.
$\l^1$-input $\l^\infty$-state stability corresponds to uniform stability.
We provide some evidence that similar criteria should not be expected for non-fading memory spaces.
\end{abstract}


\vspace{2mm}
\noindent
{\bf AMS Subject Classification.}  39A11, 39A10, 39A06, 39A12

\vspace{2mm}
\noindent
{\bf Key words.} Bounded delay, uniform stability, Perron's property, phase space.

\section{Introduction}

We consider systems of linear difference equation with infinite delay
\begin{equation}
x(n+1) = L (n) x_n + f(n),  \quad n \geq 0, \label{e nh}
\end{equation}
which in particular include \emph{Volterra difference systems}
\begin{equation}
\label{intro1}
x(n+1)=\sum_{k=-\infty}^{n} \ L(n,n-k) \ x(k) \ + f(n), \quad n\geq 0.
\end{equation}
It is assumed that $x(\cdot)$ is a discrete function from $\Z$ to a (real or complex) Banach space
$\X$, $f(\cdot)$ is a function from $\Z^+ (= \N \cup \{0\})$ to $\X$. The notation $| \cdot |$ 
stands for the norm in $\X$.

By $x_n$ we denote the semi-infinite prehistory sequence 
$ \left\{ x(n) , x(n-1), \dots, x (n+m), \dots \right\}$, $m \leq 0$. 
We suppose that the initial conditions, i.e., the sequence  $x_0 = \{ x(n+m)\}_{m=-\infty}^{0}$, 
belongs to an exponentially weighted $\l^\infty$-space $\B^\ga$, which is called
the phase space. More precisely, it is assumed that for certain $\ga \in \R$
\begin{equation*}
 | x_0 |_{\B^\ga}:=\sup_{m \leq 0} |x(m)| e^{\ga m} < \infty  \quad
\end{equation*}
and that $L(n)$, $n \geq 0$, are bounded linear mappings from $\B^\ga$ to $\X$.

The aim of the paper is to study relations between uniform exponential stability, uniform stability, 
and $\l^p$-input $\l^q$-state stability (or shorter $(\l^p,\l^q)$-stability) of (\ref{e nh}). 
The precise definitions are given in Section \ref{ss sol}.



For ordinary differential equations with coefficient
$a(t)$ satisfying
\begin{equation}
\int_t^{t+1} | a(\tau)| \, d\tau \leq M_1 < \infty ,\label{e intbound}
\end{equation}
boundedness of a solution of the initial value problem
\begin{equation}
\label{DE}
x^{\prime} (t)+a(t) x(t)=f(t),  \qquad x(0)=0,  \qquad t \geq 0,
\end{equation}
for any bounded on $[0, \infty)$ right hand side $f$ implies exponential stability
of the corresponding homogeneous equation
$ x^{\prime} (t)+a(t) x(t)=0$.
This result goes back to Bohl \cite{B14} and then was reinvented by Perron \cite{P30}; the
above relation is sometimes called the Perron property. 
The Bohl-Perron result was extended to arbitrary Banach phase spaces by M. Krein, 
see notes to \cite[Chapter III]{DK74}.
The result was later generalized in the following two directions.
On the one hand,  ``for any $f\in {\bf L}^{\infty}$ the solution
$x\in {\bf L}^{\infty}$" can be substituted by ``for any $f\in \mathfrak{B}_1$
the solution $x\in \mathfrak{B}_2$", where $\mathfrak{B}_1$ and $\mathfrak{B}_2$
are some Banach spaces of functions (see e.g. \cite[Problems III.10-16]{DK74} and references therein). 
In the terminology of our paper this property
is called $(\mathfrak{B}_1, \mathfrak{B}_2)$-\emph{stability}.
On the other hand, the Perron property was studied for various equations,
such as delay differential equations, impulsive delay differential
equations and difference equations.
For first order system of difference equations
$z(n+1) = \A (n) z (n) + \s (n)$, 
$ n  \geq 0$,
the relation between $(\l^p,\l^q)$-stability
and exponential stability was considered in \cite{P87,P88}
(according to \cite{P87}, this theory goes back to \cite{CS67}) and later in
\cite{AVM96,AVMZ94,NN05,P04,SS04} (see Theorem \ref{t ordUES} below and subsequent remarks).

The Perron property for higher order difference equations 
was studied in \cite{BB_FDE04,BB_JMAA05,BB_UnbDel,CKRV00}. The paper \cite{CKRV00} deals with
\emph{Volterra difference systems with unbounded delay}
\begin{equation}
\label{e Volunb}
x(n+1)=\sum_{k=0}^{n} \ L(n,n-k) \ x(k) \ + f(n), \qquad n\geq 0;
\end{equation}
exponential stability is understood in the sense of uniform in $n$ estimates on the fundamental 
(resolvent) matrix, the role of the spaces $\mathfrak{B}_{1,2}$ 
is played by exponentially weighted $\l^1$ and $\l^\infty$ spaces. 
In \cite{BB_FDE04,BB_JMAA05,BB_UnbDel}, $(\l^p,\l^q)$-stability for usual (nonweighted) $\l^p$-spaces 
is considered, estimates on the fundamental matrix 
are obtained and then applied to stability and exponential stability of equations with 
finite prehistory $ \{ x(n) \}_{n=-N}^0$, $N < \infty$. The case of bounded delay and 
$1\leq p=q \leq \infty$  is considered in \cite{BB_FDE04,BB_JMAA05} 
(as well as the case $\mathfrak{B}_{1}= \mathfrak{B}_{2}=\mathrm{c}_0$), 
the unbounded delay and $p=q=\infty$ in \cite{BB_UnbDel}.

The problem of finding Bohl-Perron type stability criteria for difference systems with infinite
delay naturally requires the phase space settings of \cite{MM04,MM05,M97} (see Section \ref{ss sp}).
\emph{In the present paper we solve this problem in exponentially fading phase spaces $\B^\ga$, $\ga >0$.} 
Following the differential version of \cite[Chapter 7]{HMN91}, we use the notion of 
\emph{uniform exponential stability in $B^\ga$}, 
which is different from stability properties considered in \cite{BB_FDE04,BB_JMAA05,BB_UnbDel,CKRV00} 
(and is more appropriate for the infinite delay case).
The method is based on the reduction of 
(\ref{e nh})
to a first order system with states in the phase space.
For systems with bounded delay this method has been announced in \cite{BB_UnbDel}.
The main difficulty is the fact that the $(\l^p,\l^q)$-stability property of (\ref{e nh}) 
is weaker than that of the reduced first order system.

The main points of the present paper are:
\begin{enumerate}
\item Uniform exponential stability and uniform stability are characterized in terms of 
$(\l^p,\l^q)$-stability (Theorems \ref{t UES} and \ref{t US}).
For the particular case of Volterra difference systems these results can be written in the following way. 
\emph{Let $\gamma>0$. Assume that either $p\neq 1$ or $q \neq \infty$. Then the homogeneous
system associated with (\ref{intro1}) is uniformly exponentially
stable in $\B^\ga$ if and only if system (\ref{intro1}) is $(\l^p,\l^q)$-stable and
\begin{equation}
\label{intro3}
\text{there exists a positive integer} \ l  \ \text{such that} \quad 
\sup_{n \geq 0} \ \sum_{k \geq l} \ e^{k \gamma} \| L(n,k) \|_{\X \to \X} < \infty.
\end{equation}
The homogeneous
system associated with (\ref{intro1}) is uniformly
stable in $\B^\ga$ if and only if (\ref{intro1}) is $(\l^1,\l^\infty)$-stable and (\ref{intro3}) holds.}

\item \emph{It is an immediate corollary that for systems with bounded delay condition (\ref{intro3}) 
can be omitted.}
    This is an exact analogue of the case of a first order difference system 
    for which Bohl-Perron type criteria do not require any boundedness restrictions on the coefficients 
    $\A (n)$ (see \cite{P87,P88,SS04} and Section
    \ref{ss sol} for details).
For differential equation (\ref{DE}) assumption (\ref{e intbound})
cannot be omitted: 
generally, the boundedness of a solution for any bounded right hand side does not imply 
exponential stability (see \cite[Section III.5.3]{DK74}). 
Similarly, in the case of systems with infinite delay,  some assumptions 
involving uniform boundedness of coefficients are essential in the Bohl-Perron criteria.
\emph{Remark \ref{r exPmn} and Example \ref{ex Pmn} shows that, in some sense, (\ref{intro3}) 
is the weakest possible assumption of such type.}


\item There are two other interesting corollaries. Namely, \emph{under the condition of (\ref{intro3})
or its more general version (\ref{e LPlinf}), (i) $(\l^p,\l^q)$-stability does not depend on $p$ and $q$ 
(excluding the case $(p,q) = (1,\infty)$), 
(ii) exponential stability in $\B^\delta$ does not depend on the choice of $\delta \in (0,\ga]$.}
Examples of Section \ref{s ex} 
show that for systems with unbounded delay these statements are not valid without condition (\ref{intro3}).

\item It is essential that we consider exponentially fading phase spaces $\B^\ga$, $\ga >0$. 
Example \ref{ex BB05 ex2} shows that the main results 
(Theorems \ref{t UES} and \ref{t US}) are not valid in the non-fading phase space $\B^0$. 
Nevertheless, for uniform stability in $\B^0$ we give two sufficient conditions of Bohl-Perron type 
(Corollary \ref{c UEga=0}).

\item Main results can be easily extended to exponentially fading phase spaces of $\l^p$ type
(see discussion in Section \ref{s Disc}).
 \end{enumerate}

The paper is organized as follows. After introducing in Section \ref{s Prelim} some notations and
presenting known results (which will be required in the sequel),
we formulate the criterion of uniform exponential stability (Theorem \ref{t UES}) 
and some of its corollaries in Section \ref{ss UESres}. The proof of Theorem \ref{t UES} is postponed 
to Section \ref{ss proofUES} and is preceded by auxiliary propositions of Section \ref{s RedOrd}, which
constitute the technical core of our method.
In Sections \ref{ss preUS}-\ref{ss USres ga>0} the above scheme is mimicked for uniform and 
$(\l^1,\l^\infty)$ stabilities in $\B^\ga$ with $\ga>0$. Section \ref{ss USres ga=0} is devoted to 
the more difficult question of uniform stability in $\B^0$.
The independence of exponential stability of the choice of a phase space is discussed in
Section \ref{s Ind ga} with the use of the notion of \emph{subdiagonal systems} (subdiagonal systems
are equivalent to shifted Volterra systems with unbounded delay, however the shift affects essentially 
the property of $(\l^p,\l^q)$-stability). Section 6 involves all relevant examples
demonstrating sharpness of theorems' conditions.  Section 7 contains discussion and open problems,
as well as some additional applications of the presented method.


Finally note that other aspects of stability and boundedness of difference systems with unbounded delay
 were studied e.g.
in \cite{CC09,CKRV00,E05,EMV07,EM96,KC-VT-M03,KS2003,LF2002,MM04,M97,Pituk2003}. 
An extensive list of applications can be found e.g. in \cite{KC-VT-M03}.
Most of these papers are devoted to Volterra difference
systems with unbounded delay.

\section{Preliminaries and notation \label{s Prelim}}

\subsection{The phase space and auxiliary spaces
\label{ss sp}}

As usual, we denote by $\Z$, $\Z^+$, and $\Z^-$ the set of all integers,
the set of all nonnegative integers, and the set of all nonpositive integers,
respectively.
We shall sometimes write $\Z^+_\tau$ to denote the infinite interval of
integer numbers in $[\tau, +\infty)$, so $\N = \Z_1^+$.
We use the convention that the sum equals zero if the lower index exceeds
the upper index
\begin{equation} \label{e sumconv}
\sum_k^j = 0 \quad \text{for} \quad j < k.
\end{equation}

For a seminormed space $\V$ with
a seminorm $|\cdot |_{\V}$, let $\Ssp ( \V )$ ($\Ssp_\pm ( \V )$)
denote the vector space of all
functions $v:\Z \to \V$ (resp., $v:\Z^\pm \to \V$).
We will also use the following standard spaces:
\begin{eqnarray*}
\l^p ( \V ):= \l^p (\Z^+, \V ) =  \left\{ v \! : \! \Z^+ \to \V  \ : \
\| v \|_p^p := \sum_{n=0}^{\infty} | v(n) |_{\V}^p  < \infty \ \right\}, \quad 1\leq p<\infty, \\
\l^{\infty} (\V ):= \l^{\infty} (\Z^+, \V ) =  \left\{ v \! :
\! \Z^+ \to \V \ : \ \| v \|_{\infty} := \sup_{n \in \Z^+} | v(n) |_\V <\infty \ \right\}.
\end{eqnarray*}
Recall that the $\l^p$-spaces are Banach spaces if $\V$ is a Banach space, and that they are
connected
by the continuous embedding
\begin{equation} \label{e ||q<||p}
\l^p ( \V ) \subseteq \l^q ( \V ) , \quad
\| v \|_q \leq \| v \|_p \qquad \text{if} \quad 1 \leq p \leq q \leq \infty .
\end{equation}

Let a (real or complex) Banach space $\X$ with a norm $| \cdot |$ be our basic space.
For a definition of the concept of a phase space we use
the vector space $\X^{\Z^-}$ of semi-infinite
tuples with elements in $\X$ and indices in $\Z^-$.
It is convenient to understand vectors of $\X^{\Z^-}$ as vector-columns.
That is, $\vphi \in \X^{\Z^-}$
has the form
\[
\vphi = \col \left( \vphi^{[m]} \right)_{m=-\infty}^{0} = \left( \begin{array}{r}
\vphi^{[0]} \\ \vphi^{[-1]} \\
 \dots \\  \vphi^{[m]} \\ \dots \end{array} \right) , \quad \text{where} \quad \vphi^{[m]} \in \X , \
m \in \Z^- \ .
\]
We will say that $\vphi^{[m]}$ is \emph{the m-th coordinate} of $\vphi$. (The notation of \cite{M97},
where function space $\Ssp_- (\X)$ is used instead of $\X^{\Z^-}$ for prehistory vectors,
can be considered as standard, but it is inconvenient for purposes of Section \ref{s RedOrd} of
the present paper.)

Our main objects are the system (\ref{e nh}) of nonhomogeneous linear functional difference
equations and the associated homogeneous system
\begin{equation}
x(n+1) = L (n) x_n ,  \quad n \in \Z^+. \label{e h}
\end{equation}
In formula (\ref{e nh}), $x(\cdot)$, $L (\cdot)$, $x_\bullet$, and $f (\cdot)$
have the following meaning:
\begin{description}
\item[\quad $\diamond$] $ \qquad x = x (\cdot) : \Z \to \X, \qquad f = f(\cdot): \Z^+ \to \X \ , 
\qquad x_\bullet : \Z \to \X^{\Z^-} $.
\item[\quad $\diamond$] The value $x_n \in \X^{\Z^-}$ of the function $x_\bullet$ at $n$ is
the prehistory of $x(n)$, i.e.,
\begin{eqnarray}
x_n^{[m]} & := & x (n+m) , \qquad m \in \Z^- ,   \notag \\
\text{and} \qquad x_n & = & \col ( x_n^{[m]} )_{m=-\infty}^{0} = \left( \begin{array}{r}
x_n^{[0]} \\ x_n^{[-1]} \\
\dots \\  x_n^{[m]} \\ \dots \end{array} \right) := \left( \begin{array}{r}
x (n) \\ x (n-1) \\
\dots \\  x (n+m) \\ \dots \end{array} \right) . \label{e xnm}
\end{eqnarray}
\item[\quad $\diamond$] For each $n \in \Z^+$, $L(n) : \dom L(n) \to \X$ is a linear map defined on a
certain linear subspace $\dom L(n) $ of $\X^{\Z^-}$. It is assumed also that the operator valued function
$L=L (\cdot)$
    \emph{defines} system  (\ref{e nh}) on a certain \emph{phase space} in the sense explained below.
\end{description}

\begin{definition} \label{d phsp}
A linear subspace
$\B \subseteq \X^{\Z^-}$ is called \emph{a phase space}
if for any $x:\Z \to \X$, the inclusion $x_0 \in \B$ implies
$x_n \in \B$ for all $n \in \Z^+$.
\end{definition}

An example of a phase space is the vector space
$\B_{\fin}$
of all \emph{finite vectors} of $\X^{\Z^-}$, that is, of all
$\vphi \in \X^{\Z^-}$ such that $\vphi^{[-j]} $ is zero for $j$ large enough. It is easy to see
that $\B_\fin \subseteq \B$ for any phase space $\B$.
A phase space $\B$ is usually assumed to be equipped with a semi-norm or a norm $| \cdot |_{\B}$ that
satisfies certain axioms (see e.g. \cite{MM05,M97}).

Let $\V_1$ be a seminormed subspace of a certain vector space $\V$ and let $\V_2$ be a normed space.
Let $J$ be a linear mapping from a linear manifold $\dom J \subseteq \V$ to $\V_2$.
We write $J \in \L (\V_1 , \V_2 )$ and say that $J$ is a bounded linear operator from
$\V_1$ to $\V_2$ if $\V_1 \subseteq \dom J $ and
$\| J \|_{\V_1 \to \V_2} := \sup \{\, |J v|_{\V_2} \, : \,
|v|_{\V_1} \leq 1 \}$ is finite. If $\V_1 = \V_2$ is a normed space,
we write $\L(\V_2) :=\L (\V_1,\V_2)$.

\begin{definition} \label{d Ldef}
We shall say that the operator valued function $L (\cdot)$ \emph{defines system} (\ref{e nh}) on
a phase space $\B$
if $ \B \subseteq \dom L (n) $ for all $n \in \Z^+$.
If the phase space $\B$ is a seminormed space, we assume additionally that $L (n) \in \L (\B,\X)$ for
all $n \in \Z^+$.
\end{definition}

We will use normed and seminormed phase spaces of the following types:
Banach spaces $\B^{\ga}$ defined for $\ga \in \R$ by
\[
\B^{\ga} := \{ \vphi = \col ( \vphi^{[m]} )_{m=-\infty}^{0} \in \X^{\Z^-} \ :
\ | \vphi |_{\B^{\ga}} := \sup_{m \in \Z^-} |\vphi^{[m]} | e^{\ga m} < \infty \} ,
\]
and the seminormed linear spaces $\B_{[j,0]}^{\ga}$
defined for $j \in \Z^-$ and $\ga \in \R$ by
\[
\B_{[j,0]}^{\ga} = \X^{\Z^-} , \qquad | \vphi |_{\B_{[j,0]}^{\ga}} :=
\sup_{j \leq m \leq 0} |\vphi^{[m]} | e^{\ga m}  .
\]
It is clear that $\B_{[j,0]}^{\ga}$ is complete, that is, the quotient
$\B_{[j,0]}^{\ga} / | \cdot |_{\B_{[j,0]}^{\ga}} $ is a Banach space.

For phase spaces of $\l^p$-type see Section \ref{s Disc}.

\subsection{Solutions of initial value problems, stability
and first order systems} \label{ss sol}

Let $\W$ be an auxiliary Banach space.
The zero vectors of spaces $\X$ and $\W$ are denoted by $\vz_\X$ and $\vz_\W$, respectively.
The zero vector of
the vector space $\X^{  \Z^-}$ is denoted by $\vz_\B$.
The zero function of the function space $\Ssp_+ (\V)$ (and its subspaces $\l^p (\V)$) will be denoted by
$\fz$ for any choice of $\V$.

From now on  we assume that the function $L$ defines system (\ref{e nh}) on a phase space $\B$.
For any $(\tau,\vphi) \in \Z^+ \times \B$, there exists  unique $x : \Z \to \X$
such that $x_\tau = \vphi$ and the relation (\ref{e nh}) holds for all $n \geq \tau$.
The function $x$ is called a solution of (\ref{e nh}) through
$(\tau,\vphi)$, and is denoted by $x (\cdot, \tau, \vphi; f)$.
For each $n \in \Z$, $x_n (\tau, \vphi; f) $ is the prehistory vector-column generated
by $x (j, \tau, \vphi; f)$, $-\infty <j \leq n$ in the way shown by (\ref{e xnm}).

\begin{definition} \label{d lplq st}
 The nonhomogeneous system (\ref{e nh}) is called \emph{$\l^p$-input $\l^q$-state
stable ($(\l^p,\l^q)$-stable, in short)} if $x(\cdot , 0, \vz_\B; f) \in \l^q (\X)$ for
any $f \in \l^p (\X)$.
\end{definition}

The following definition is a modification of standard ones, see e.g.
\cite[Section 7.2]{HMN91} and \cite{BB_JMAA05,BB_UnbDel}.
\begin{definition} \label{d UESn}
Assume that the function $L$ defines system (\ref{e nh}) on a seminormed phase space $\B$.

\textbf{(1)} The system (\ref{e h}) is called \emph{uniformly exponentially stable
(UES) in (the sense of) $\X$ with respect to the phase space $\B$}  if
there exist $K \geq 1$ and $\nu >0$ such that
\begin{equation}
|x(n, \tau, \vphi; \fz)| \leq  K e^{-\nu (n-\tau)} | \vphi |_{\B}
\quad \text{for all} \ n, \tau \ \text{such that}
\ n \geq \tau \geq 0.
\label{e ExStX}
\end{equation}
(If the phase space $\B$ is fixed we will say
in brief that the system is UES in $\X$.)

\textbf{(2)} The system (\ref{e h}) is called \emph{UES in
(the sense of) $\B$} if the $\X$-norm $|x(n, \tau, \vphi; \fz )|$
in (\ref{e ExStX}) is replaced by the
$\B$-seminorm $| x_n (\tau, \vphi; \fz ) |_{\B}$.
\end{definition}

We will use some stability results for a
\emph{first order difference system}
\begin{equation}
z(n+1) = \A (n) z (n) ,  \quad n \in \Z^+,
\label{e ordh}
\end{equation}
where $ z : \Z^+ \to \W$, $\A : \Z^+ \to \L (\W)$, and
$\W$ is a certain Banach space.
For $\psi \in \W$ and a function $\s : \Z^+ \to \W$, we denote by
$z (\cdot, \tau, \psi; \s): \Z_\tau^+ \to \W$
the solution of the associated nonhomogeneous initial value problem
\begin{eqnarray}
z(n+1) &= &\A (n) z (n) + \s (n),  \quad n \geq \tau \geq 0, 
\label{e ordnh} \\
z(\tau) & = &\psi \ , \qquad \psi \in \W .
\label{e ztau}
\end{eqnarray}

\begin{definition} \label{d ordUES}
\textbf{(1)} The homogeneous system (\ref{e ordh}) is called \emph{UES}  if there exist
$K_1 \geq 1$ and $\nu_1 >0$ such that
the solution of (\ref{e ordh}), (\ref{e ztau}) satisfies
\begin{equation}
|z(n, \tau, \psi; \fz)|_\W \leq  K_1 e^{-\nu_1 (n-\tau)}
| \psi |_\W  \quad \text{for all} \ n \geq \tau \geq 0.
\label{e ExStW}
\end{equation}

\textbf{(2)} The nonhomogeneous system (\ref{e ordnh}) is called \emph{$(\l^p,\l^q)$-stable}
if $z(\cdot , 0, \vz_\W; \s) \in \l^q (\W)$ for any $\s \in \l^p (\W)$.
\end{definition}

For first order systems the following criterion is known.

\begin{theorem}[\cite{SS04}, see also \cite{P87,P88,AVM96} for particular cases] \label{t ordUES}
 Let $\A : \Z^+ \to \L (\W)$, let $1 \leq p \leq q \leq \infty$, and let the pair $(p,q)$
 be distinct from $(1,\infty)$.
  Then the homogeneous system (\ref{e ordh}) is UES if and only if the associated
  nonhomogeneous system (\ref{e ordnh}) is $(\l^p,\l^q)$-stable.
 \end{theorem}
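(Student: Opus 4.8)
The plan is to pass through the transition operator $U(n,m) \in \L(\W)$ of (\ref{e ordh}), defined by $U(m,m) = I$ and $U(n,m) = \A(n-1)\cdots \A(m)$ for $n > m \ge 0$, so that $z(n,\tau,\psi;\fz) = U(n,\tau)\psi$ and, by discrete variation of constants, $z(n,0,\vz_\W;\s) = \sum_{k=0}^{n-1} U(n,k+1)\s(k)$. In these terms UES is equivalent to the operator bound $\|U(n,\tau)\|_{\L(\W)} \le K_1 e^{-\nu_1(n-\tau)}$ for all $n \ge \tau \ge 0$. The implication UES $\Rightarrow$ $(\l^p,\l^q)$-stability is the routine direction: the bound $\|U(n,k+1)\| \le K_1 e^{-\nu_1(n-1-k)}$ exhibits $n \mapsto z(n,0,\vz_\W;\s)$ as dominated by the convolution of $|\s|$ with the summable kernel $K_1 e^{-\nu_1 j}\mathbbm{1}_{j\ge 0}$, so Young's inequality gives $z(\cdot,0,\vz_\W;\s) \in \l^p(\W) \subseteq \l^q(\W)$ (here $p \le q$ and (\ref{e ||q<||p}) are used). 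Note this direction does not require excluding $(1,\infty)$.

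For the converse, the first step is to upgrade $(\l^p,\l^q)$-stability into a quantitative bound. The input-output map $T\colon \s \mapsto z(\cdot,0,\vz_\W;\s)$ is linear and, by hypothesis, maps $\l^p(\W)$ into $\l^q(\W)$; it is closed, since $\l^p$- (resp.\ $\l^q$-) convergence forces coordinatewise convergence and each partial sum $\sum_{k=0}^{n-1}U(n,k+1)\s(k)$ is continuous in $\s$. Hence the closed graph theorem yields $M := \|T\|_{\l^p \to \l^q} < \infty$. Testing $T$ on the single-point inputs $\s = \mathbbm{1}_{\{m\}}\psi$, for which $(T\s)(n) = U(n,m+1)\psi$, immediately gives the uniform bound $\|U(n,m)\| \le M$ for all $n \ge m \ge 1$.

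The decisive step is to promote uniform boundedness to decay by a resonant choice of input. Fix $m \ge 1$ and a unit vector $\psi$, put $N := n-m$, and feed in $\s(k) := U(k+1,m)\psi$ for $m \le k \le n-1$ and $\s(k) := 0$ otherwise. The identity $U(j,k+1)U(k+1,m) = U(j,m)$ then gives $(T\s)(j) = (j-m)U(j,m)\psi$ for $m < j \le n$, while the cocycle bound $|U(j,m)\psi| \ge M^{-1}|U(n,m)\psi|$ controls the output from below. Comparing $\|T\s\|_q \le M\|\s\|_p$ with $\|\s\|_p \le M N^{1/p}$ and the output lower bound — summing the $\l^q$-norm over $j$ when $q<\infty$, and retaining the single coordinate $j=n$ when $q=\infty$ — produces, after routine estimates of $\sum_{i\le N} i^q$, a polynomial decay estimate
\begin{equation*}
\|U(m+N,m)\| \le C\, N^{-\beta}, \qquad \beta = 1 - \tfrac1p + \tfrac1q ,
\end{equation*}
uniformly in $m \ge 1$. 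The role of the hypothesis is now transparent: $\beta>0$ precisely because $(p,q)\neq(1,\infty)$.

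Finally, choosing $N_0$ with $C N_0^{-\beta} \le \tfrac12$ gives $\|U(m+N_0,m)\| \le \tfrac12$ uniformly in $m\ge1$, and the cocycle property $U(n,m)=U(n,k)U(k,m)$ upgrades this to $\|U(m+jN_0,m)\| \le 2^{-j}$ and hence to an exponential bound $\|U(n,m)\|\le K e^{-\nu(n-m)}$ for all $n\ge m\ge1$; the remaining case $m=0$ follows from $U(n,0)=U(n,1)\A(0)$ with the fixed finite constant $\|\A(0)\|_{\L(\W)}$, since $\A(0)\in\L(\W)$. This is exactly UES. I expect the main obstacle to be the resonant step: in infinite dimensions the pointwise $\l^q$-summability of the columns of $U$ extracted from single-point inputs does not transfer to an operator-norm estimate, and it is the windowed construction above — rather than a naive Datko-type summation — that both circumvents this and reveals why $(1,\infty)$ must be excluded.
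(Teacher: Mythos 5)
The paper does not prove this theorem: it is quoted from \cite{SS04} (with \cite{P87,P88,AVM96} for special cases), so there is no in-paper proof to compare against. Your argument is correct and is essentially the standard Przyluski--Sasu-type proof from that literature: closed graph to get $\|T\|_{\l^p\to\l^q}=M<\infty$, single-point inputs to get $\sup_{n\ge m\ge 1}\|U(n,m)\|\le M$, the windowed input $\s(k)=U(k+1,m)\psi$ to force the polynomial decay $\|U(m+N,m)\|\le CN^{-\beta}$ with $\beta=1-\tfrac1p+\tfrac1q>0$ exactly when $(p,q)\neq(1,\infty)$, and iteration via the cocycle identity to convert this into an exponential bound; the separate treatment of $\tau=0$ through $U(n,0)=U(n,1)\A(0)$ correctly avoids assuming $\sup_n\|\A(n)\|<\infty$, consistent with the paper's Remark \ref{r Alinf}.
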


To the best of our knowledge, the general case of Theorem \ref{t ordUES} was first proved in
\cite{SS04}.
For $p=q=\infty$, the theorem was obtained earlier in \cite[Section 4]{P88}.
But it is easy to see that for the more general case $1 < p \leq q = \infty$
the essential part of the theorem (the implication 'if') follows from
\cite[Section 4]{P87} (one can check that (\ref{e ordnh})
is uniformly equicontrollable in the terms of \cite[Section 2]{P87}).
Note also that the remark at the end of \cite{P87} states  that the result of
\cite[Section 4]{P87} is contained implicitly in \cite{CS67}.

By a different method based on \cite{AVMZ94} the case $ 1 \leq p=q \leq \infty$
was proved in \cite[Corollary 5]{AVM96}, formally under the additional assumption
$\sup_{n \in \Z^+} \| \A (n) \|_{\W \to \W} < \infty$ (see also \cite{NN05,P04}).
This additional assumption can be easily
removed (see Remark \ref{r Alinf}).
Note that the papers \cite{AVM96,AVMZ94,NN05} study stability along with with exponential dichotomy.

The following statement is standard, cf. Proof of Theorem 2.2 in \cite{SS04}
and \cite[Lemma 2.3]{NN05} for the case of  first order systems.

\begin{proposition} \label{p GaL}
Assume that $1 \leq p,q \leq \infty$, $\ga \in \R$, and function
$L:\Z^+ \to \L (\B^\ga,\X) $ defines system (\ref{e nh}).
If (\ref{e nh}) is $(\l^p,\l^q)$-stable, then
\begin{equation} \label{e |x|q<|s|p}
\| x(\cdot , 0, \vz_\B; f) \|_q \leq K_{p,q,L}  \| f \|_p
\end{equation}
for a certain constant  $K_{p,q,L} \geq 1$
depending on $L$.
\end{proposition}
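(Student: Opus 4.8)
The plan is to recognize the solution operator as a linear map between Banach spaces and to invoke the Closed Graph Theorem. I would define $T \colon \l^p(\X) \to \l^q(\X)$ by $Tf := x(\cdot, 0, \vz_\B; f)$. Linearity and uniqueness of the solution through $(0,\vz_\B)$ make $T$ linear, and the $(\l^p,\l^q)$-stability hypothesis is exactly the statement that $Tf \in \l^q(\X)$ for every $f \in \l^p(\X)$, so $T$ is well defined as a map into $\l^q(\X)$. Since $\X$ is Banach, both $\l^p(\X)$ and $\l^q(\X)$ are Banach spaces for all $1 \le p,q \le \infty$, and the desired inequality is precisely the assertion that $T$ is bounded. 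Hence it suffices to show that the graph of $T$ is closed.

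First I would isolate the key property, which I will call \emph{coordinate-wise continuity}: for each fixed $n \in \Z^+$ the evaluation $f \mapsto x(n,0,\vz_\B;f)$ is a bounded linear map from $\l^p(\X)$ into $\X$. This I would prove by induction on $n$ using the recursion in (\ref{e nh}). For $n=0$ the value is $\vz_\X$. For the inductive step, observe that the prehistory vector $x_n$ has coordinates $x_n^{[m]} = x(n+m)$, which vanish whenever $n+m \le 0$ because the initial data is zero; thus $x_n$ is a finite vector whose only possibly nonzero coordinates are $x(1), \dots, x(n)$, each depending boundedly and linearly on $f$ by the induction hypothesis. Consequently $f \mapsto x_n$ is bounded linear into $\B^\ga$, since $| x_n |_{\B^\ga}$ is the maximum of finitely many of these terms weighted by $e^{\ga m}$. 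Because $L(n) \in \L(\B^\ga,\X)$, the term $L(n)x_n$ then depends boundedly on $f$, and adding the bounded functional $f \mapsto f(n)$ yields the claim for $x(n+1)$.

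Finally I would close the graph. Suppose $f_k \to f$ in $\l^p(\X)$ and $Tf_k \to y$ in $\l^q(\X)$. Norm convergence in any of the spaces $\l^q(\X)$ forces convergence of each coordinate in $\X$, so $(Tf_k)(n) \to y(n)$ for every $n$. On the other hand, applying the coordinate-wise continuity property to $f_k \to f$ in $\l^p(\X)$ gives $(Tf_k)(n) = x(n,0,\vz_\B;f_k) \to x(n,0,\vz_\B;f) = (Tf)(n)$ for every $n$. Uniqueness of limits in $\X$ then yields $(Tf)(n) = y(n)$ for all $n$, that is $Tf = y$, so the graph is closed. The Closed Graph Theorem provides a finite bound $\| Tf \|_q \le \| T \|\, \| f \|_p$, and setting $K_{p,q,L} := \max\{1, \| T \|\}$ gives the stated estimate. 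The only genuinely substantive step is the coordinate-wise continuity property; everything else is standard closed-graph packaging. What makes that step work is exactly the finiteness of each prehistory $x_n$ under zero initial data together with the assumed boundedness $L(n) \in \L(\B^\ga,\X)$, so I expect no serious obstacle beyond bookkeeping in the induction.
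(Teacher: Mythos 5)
Your argument is correct and is exactly the paper's proof: the paper defines $\Gamma : f \mapsto x(\cdot,0,\vz_\B;f)$, notes that closedness "follows easily from (\ref{e nh})", and invokes the closed graph principle. You have simply supplied the details the paper leaves implicit, namely the coordinate-wise boundedness of $f \mapsto x(n,0,\vz_\B;f)$ via the finiteness of the prehistory under zero initial data and the assumption $L(n)\in\L(\B^\ga,\X)$.
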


\begin{proof}
The linear operator $\Gamma : f \to x(\cdot , 0, \vz_\B; f)$
is correctly defined as an operator from $\l^p (\X)$ to $\l^q (\X)$.
It follows easily from (\ref{e nh}) that
$\Gamma$ is closed. By the closed graph principle, $\Gamma$ is bounded.
\end{proof}


We need also an analogue of the above proposition
for the first order system (\ref{e ordnh}). Namely, if $1 \leq p , q \leq \infty$ and
$\A : \Z^+ \to \L (\W)$, then
\begin{equation} \label{e |z|q<|s|p}
(\l^p,\l^q)\text{-stability of (\ref{e ordnh}) implies} \quad
\| z(\cdot , 0, \vz_\W; \s) \|_q \leq K_{p,q,\A} \| \s \|_p \, \end{equation}
with a certain constant $K_{p,q,\A} \geq 1$.
The proof is the same.

\begin{remark} \label{r Alinf}
\textbf{(1)} UE stability of (\ref{e ordh}) immediately implies
$ \sup_{n \in \Z^+} \| \A (n) \|_{\W \to \W} < \infty$.
Indeed, from (\ref{e ExStW}), we have
\[
| \A(\tau) \psi |_\W = | z(\tau+1,\tau,\psi; \fz) |_\W
 \leq K_1 e^{-\nu_1} \| \psi \|_\W ,
\quad \psi \in \W, \quad \tau \in \Z^+ .
\]
So $\| \A (n) \|_{\W \to \W} \leq K_1 e^{-\nu_1}$ for all $n \in \Z^+$.

\textbf{(2)} On the other hand, $(\l^p,\l^q)$-stability of (\ref{e ordnh}) also yields
$ \sup_{n \in \Z^+} \| \A (n) \|_{\W \to \W} < \infty$.
In fact, for $\psi \in \W$, consider $\s \in \l^p (\W)$ defined by
$\s (k) = \delta_{k,n} \psi$, where $\delta_{k,n}$ is Kronecker's delta.
Then $z (n+1, 0, \vz_\W; \s) = \psi$, $\s (n+1) = \vz_\W$, and therefore
$z (n+2, 0, \vz_\W; \s ) = \A (n+1) \psi $.
By (\ref{e |z|q<|s|p}),
\[
| \A (n+1) \psi |_\W = | z (n+2, 0, \vz_\W; \s) |_\W \leq
\| z (\cdot, 0, \vz_\W ; s) \|_q \leq
K_{p,q,\A} \| s \|_p = K_{p,q,\A} | \psi |_\W \ .
\]
\end{remark}

\subsection{Auxiliary operators and related notation\label{ss op}}

Let $\B$ be a phase space. Let $\I$ stand for the identity operator in $\X^{\Z^-}$ and
so for the identity operator in $\B$.

For functions $L:\Z^+ \to \L (\B,\X)$ and $g : \Z^+ \to \B$, we define the function $L g$ by
\begin{equation*}
L g : \Z^+ \to \X, \qquad (Lg) (n) := L(n) g (n), \quad n \in \Z^+ .
\end{equation*}
If $C$ is a map from $\B$ to $\X$, then the function $C g : \Z^+ \to \X$ has the natural meaning of
\[
(C g) (n)= C g(n), \quad n \in \Z^+ .
\]

For $m \in \Z^-$ and $g : \Z^+ \to \B^\ga$, we will use the shortening $g^{[m]}$ for
the function that maps $n \in \Z^+$ to the $m$-th coordinate $(g(n))^{[m]}$ of $g(n)$, i.e.,
\begin{equation} \label{e gm}
g^{[m]} : \Z^+ \to \X, \qquad g^{[m]} (n) := (g (n) )^{[m]} .
\end{equation}

Let us define the 'backward shift' operator
$\Sh : \X^{\Z^-} \to \X^{\Z^-}$  by
\begin{eqnarray*} \label{e Sh}
(\Sh \vphi)^{[m]}  :=  \left\{
\begin{array}{rr}
\vz_{\X} , &  m = 0 \\
\vphi^{[m+1]}  , & m \leq -1
\end{array} \right. \ , \quad \vphi \in \X^{\Z^-} .
\end{eqnarray*}
The operators $\Sh^j $, $j \in \N$, shift coordinates of $\vphi$ on $j$
units downward and supplement coordinates with indices from $1-j$
to $0$ by the zero-element $\vz_\X$. As usual, $\Sh^0 = \I$.

For $m_1 \in \{-\infty\} \cup \Z^- $ and $m_2 \in \Z^-$ such that $m_1 \leq m_2 $,
we define the projection operator
$\Pr_{[m_1,m_2]} : \X^{\Z^-}  \to \X^{\Z^-}$ by
\begin{equation} \label{e Prm1m2}
(\Pr_{[m_1,m_2]} \vphi)^{[m]} = \left\{ \begin{array}{rr}
\vphi^{[m]} , & m_1 \leq m \leq m_2 \\
\vz_{\X}  , & \text{otherwise}
\end{array} \right. \ , \qquad  m \in \Z^- , \quad  \vphi \in \X^{\Z^-}.
\end{equation}
The operator $\Pr_{[m_1,m_2]}$ saves the coordinates from $m_1$ to $m_2$
and nulls all other coordinates.
If $m_1=m_2 = m\in \Z^-$, we write
\[
\Pr_{\{m\}} := \Pr_{[m,m]}.
\]
We will use extensively the operator $\Pr_{\{0\}} $ that maps
$\col (\vphi^{[0]}, \vphi^{[-1]}, \vphi^{[-2]}, \dots)$ to
$\col (\vphi^{[0]}, \vz_\X , \vz_\X, \dots)$.

Note that for any $\vphi \in {\B^{\ga}}$,
\begin{eqnarray}
| \Sh^j \vphi |_{\B^{\ga}} & = & e^{-j\ga} | \vphi |_{\B^{\ga}} \ , \quad j \in \Z^+, \label{e |Sh|} \\
| \vphi |_{\B^{\ga}} & = &
\max \{ \ |\Pr_{\{0\}} \vphi |_{\B^{\ga}} \ , \ | (\I - \Pr_{\{0\}}) \vphi |_{\B^{\ga}} \ \} .
\label{e |phi|P0}
\end{eqnarray}

For $j \in \Z^-$ we consider also the operators
\begin{equation} \label{e Ej}
 \E_j : \X \to \X^{\Z^-} , \qquad
 (\E_j \psi)^{[m]}  = \left\{ \begin{array}{rr}
\psi , & m = j \\
\vz_{\X}  , & m \neq j
\end{array} \right. , \qquad \psi \in \X, \ \ m \in \Z^-.
\end{equation}
Assume that $\dom L(n) \supseteq \B_{\fin}$ for all $n$.
Let us define components of the operator $L(n)$ by
\begin{equation} \label{e Lnm}
L(n,k) : \X \to \X, \qquad L(n,k) := L (n) \E_{-k} , \quad \ k \in \Z^+ \ .
\end{equation}

\begin{remark} \label{r Lnk not def}
Note that generally for $L(n) \in \L(\B^\ga,\X)$, operators $L(n,k)$, $k \in \Z^+$,
do not determine the operator $L(n)$ on $\B^\ga$.
As an example, one can take $\X=\R$, $\ga=0$, and
$L(n)$ equal to any of Banach limits (see e.g. \cite[Sec. II.4.22]{DSh58} for the definition).
Then $L(n,k) = 0$ for all $k$, but $L(n) \col (1,1, \dots) =1$.
\end{remark}

However, $L(n,k)$ determine $L(n)$ on finite vector-columns in the following way: for any
$\vphi \in \B_\fin$,
\[
L (n) \vphi = \Bigl( \, L (n,0) \ L (n,1) \ \dots \ L (n,-m) \ \dots  \ \Bigr) \left( \begin{array}{r}
\vphi^{[0]} \\ \vphi^{[-1]} \\
 \dots \\  \vphi^{[m]} \\ \dots \end{array} \right)=
\sum_{m  \ : \ \vphi^{[m]} \neq \vz_\X } L (n,-m) \vphi^{[m]} \ .
\]

\section{Exponential stability and $(\l^p,\l^q)$-stability \label{s UES}}

The main result on UE stability, Theorem \ref{t UES}, and some of its
corollaries are presented in Section \ref{ss UESres}. The proof of Theorem
\ref{t UES} given in Section \ref{ss proofUES}
is based on the method described in Section \ref{s RedOrd}.


\subsection{Main results \label{ss UESres}}

Recall that the projection operators $\Pr_{[m_1,m_2]}$ were
defined in Section \ref{ss op}.

\begin{theorem} \label{t UES}
Let $\ga>0$ and let $L:\Z^+ \to \L (\B^\ga,\X)$ define system (\ref{e nh}). 
Assume that the pair $(p,q)$ is such that
\begin{equation}
1 \leq p \leq q \leq \infty \qquad \text{and} \qquad  (p,q) \neq (1,\infty) .
\label{e pqneq}
\end{equation}
Then the following statements are equivalent:
\begin{description}
\item[(i)] System (\ref{e h}) is UES in $\X$ with respect to (w.r.t.) $\B^\ga$.
\item[(ii)] System (\ref{e h}) is UES in $\B^{\ga}$.
\item[(iii)] System (\ref{e nh}) is $(\l^p,\l^q)$-stable and
\begin{equation} \label{e LPlinf}
\text{there exists } \ m \in \Z^- \ \ \text{such that} \quad
\| L (\cdot) \Pr_{[-\infty,m]} \|_\infty := 
\sup_{n \in \Z^+} \| L (n) \Pr_{[-\infty,m]} \|_{\B^\ga \to \X} < \infty \ .
\end{equation}
\end{description}
\end{theorem}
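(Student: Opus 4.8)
The natural strategy is to prove the cycle of implications $(i) \Rightarrow (ii) \Rightarrow (iii) \Rightarrow (i)$, reducing everything to the first-order criterion Theorem~\ref{t ordUES} applied to the associated first-order system with states in the phase space $\B^\ga$. The paper itself flags that the technical core lives in Section~\ref{s RedOrd}, so I would lean on those reduction propositions: rewrite (\ref{e h}) as a first-order system $z(n+1) = \A(n) z(n)$ on the Banach space $\W = \B^\ga$, where $\A(n)$ acts on a prehistory $\vphi$ by shifting its coordinates down one step and inserting $L(n)\vphi$ into the top coordinate. Concretely $\A(n) = \Sh + \E_0 L(n)$ in the notation of Section~\ref{ss op}. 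Under this identification, UE stability of (\ref{e h}) in $\B^\ga$ (statement (ii)) is literally UE stability of the first-order system, because $|z(n)|_\W = |x_n|_{\B^\ga}$.

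**Key steps.** First, $(i) \Leftrightarrow (ii)$: the two exponential-stability notions differ only by whether one measures the solution in the $\X$-norm $|x(n)|$ or the $\B^\ga$-seminorm $|x_n|_{\B^\ga}$. Since $\ga > 0$ and the weight is exponentially fading, a pointwise $\X$-estimate $|x(n)| \le K e^{-\nu(n-\tau)}|\vphi|_{\B^\ga}$ can be summed against the fading weight $e^{\ga m}$ over the whole prehistory to recover a $\B^\ga$-estimate, and conversely the $\B^\ga$-seminorm dominates the top coordinate $|x_n^{[0]}| = |x(n)|$; this is where fading of the memory ($\ga>0$) is essential and where I expect to track constants carefully using (\ref{e |Sh|}) and (\ref{e |phi|P0}). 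Second, $(ii) \Rightarrow (iii)$: from UE stability of the first-order reduction, Theorem~\ref{t ordUES} gives $(\l^p,\l^q)$-stability of the \emph{reduced} system in $\W$; one must then descend to $(\l^p,\l^q)$-stability of (\ref{e nh}) in $\X$, which is the weaker conclusion, plus derive the boundedness condition (\ref{e LPlinf}) from UE stability (Remark~\ref{r Alinf} already shows UE stability forces $\sup_n \|\A(n)\|_{\W\to\W} < \infty$, which encodes exactly the required uniform bound on $L(n)$ restricted to the tail). Third, $(iii) \Rightarrow (i)$: this is the substantive converse — from $(\l^p,\l^q)$-stability of (\ref{e nh}) in $\X$ together with (\ref{e LPlinf}), reconstruct $(\l^p,\l^q)$-stability of the reduced first-order system in $\W$, then invoke Theorem~\ref{t ordUES} to get UE stability.

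**The main obstacle.** The hard direction is $(iii) \Rightarrow (i)$, precisely for the reason the introduction singles out: the $(\l^p,\l^q)$-stability of (\ref{e nh}) is \emph{strictly weaker} than that of the reduced first-order system, because a forcing term $f(n) \in \X$ only perturbs the top coordinate, whereas an arbitrary forcing $\s(n) \in \W = \B^\ga$ for the reduced system can inject mass into every coordinate of the prehistory. So I cannot simply transfer $(\l^p,\l^q)$-stability upward for free. The role of hypothesis (\ref{e LPlinf}) is exactly to control the deep-tail coordinates: it splits $L(n) = L(n)\Pr_{[m,0]} + L(n)\Pr_{[-\infty,m-1]}$, where the first term depends on only finitely many coordinates (a bounded-delay part) and the second has uniformly small operator norm after the weight is absorbed. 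The plan for this step is to treat an arbitrary $\W$-valued forcing by decomposing it, using the fading weight to bound the contribution of the tail part of the state (which is governed by $\Sh$ and decays like $e^{-j\ga}$ by (\ref{e |Sh|})), and reducing the remaining finite-dimensional-in-delay part to genuine $\X$-valued forcing that (\ref{e nh})'s $(\l^p,\l^q)$-stability can absorb via Proposition~\ref{p GaL}. Making this decomposition quantitatively closed — i.e. showing the reduced system inherits $(\l^p,\l^q)$-stability with a finite constant rather than merely boundedness along a subsequence — is where the estimates will be most delicate, and I would expect to need the exclusion $(p,q)\neq(1,\infty)$ here to keep the summation/supremum interchange under control, mirroring its role in Theorem~\ref{t ordUES}.
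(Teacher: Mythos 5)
Your proposal is correct and follows essentially the same route as the paper: reduction to the first-order system $D(n)=\E_0 L(n)+\Sh$ on $\W=\B^\ga$, the equivalence (i)$\Leftrightarrow$(ii) via the fading weight, and, for the hard direction (iii)$\Rightarrow$(i), decomposing an arbitrary $\B^\ga$-valued forcing into a part propagated by the contractive shift plus an induced $\X$-valued forcing absorbed by Proposition~\ref{p GaL} --- which is exactly what the paper's representation formula (Proposition~\ref{p rep}) and Propositions~\ref{p hlp}, \ref{p lp}, \ref{p |Lnk|<} implement. The only minor imprecision is that the exclusion $(p,q)\neq(1,\infty)$ enters solely through the invocation of Theorem~\ref{t ordUES}, not through the decomposition estimates themselves.
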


In the case $p=q=\infty$, Theorem \ref{t UES} was obtained in \cite{BB_UnbDel}
under certain additional conditions.
The method of \cite{BB_UnbDel} differs from the method of the present paper.

\begin{remark} \label{r |L|<inf}
The proof of Theorem \ref{t UES} shows that if any of statements (i)-(iii) of
Theorem \ref{t UES} is fulfilled, then
$\sup_{n \in \Z^+} \| L (n) \|_{\B^\ga \to \X} < \infty. $
\end{remark}

\begin{remark} \label{r exPmn}
\textbf{(1)}
Simple Example \ref{ex lplq ind}
demonstrates that condition (\ref{e LPlinf})
in Theorem \ref{t UES} \emph{cannot be omitted}.
More subtle Example \ref{ex Pmn} shows that
condition (\ref{e LPlinf}) \emph{cannot be replaced} by the
less restrictive condition
\begin{equation} \label{e LPmn}
 \sup_{n \in \Z^+} \| L (n) \Pr_{[-\infty,m_n]} \|_{\B^\ga \to \X} < \infty \
\end{equation}
with non-positive $m_n$ such that $\lim\limits_{n\to\infty} m_n = -\infty$.

\textbf{(2)} Consider the case when $\ga\leq 0$ and $\X$ is nontrivial (i.e., $\X \neq \{ 0_\X\}$).
Then UE stability in $\B^\ga$ \emph{does not hold} for any system of the form (\ref{e nh}).
This follows immediately from the definitions of $\B^\ga$ and UE stability.
Example \ref{ex BB05 ex2} shows that, in general, the implication (iii)$\Rightarrow$(i) is also not valid.
\end{remark}


Since UE-stability does not depend on the choice of $p$ and $q$ in the $(\l^p,\l^q)$-stability
property we get the following.

\begin{corollary} \label{c lplq}
Let $\ga >0$ and let a function $ L:\Z^+ \to \L (\B^\ga,X)$ define system (\ref{e nh}).
Assume that (\ref{e LPlinf}) holds. Then $(\l^p,\l^q)$-stability of (\ref{e nh}) for a certain pair
$(p,q)$ satisfying (\ref{e pqneq}) implies $(\l^{p},\l^{q})$-stability of (\ref{e nh}) for all $(p,q)$
satisfying (\ref{e pqneq}).
\end{corollary}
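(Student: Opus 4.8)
The plan is to deduce Corollary~\ref{c lplq} directly from Theorem~\ref{t UES}, using the equivalence of statements (i) and (iii) there. The key observation is that statement (i), UE stability in $\X$ with respect to $\B^\ga$, is a property of the \emph{homogeneous} system (\ref{e h}) alone: it involves the constants $K \geq 1$ and $\nu > 0$ in (\ref{e ExStX}) and makes no reference whatsoever to any integrability exponents $p$ or $q$. In particular, (i) is a single, fixed statement that does not change as we vary the pair $(p,q)$ over those satisfying (\ref{e pqneq}). By contrast, statement (iii) does depend on $(p,q)$, but only through the $(\l^p,\l^q)$-stability clause, since the second clause (\ref{e LPlinf}) is also $(p,q)$-independent and is assumed to hold throughout by hypothesis.

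First I would fix an arbitrary pair $(p_1,q_1)$ satisfying (\ref{e pqneq}) for which (\ref{e nh}) is known to be $(\l^{p_1},\l^{q_1})$-stable. Since (\ref{e LPlinf}) holds by the standing assumption of the corollary, statement (iii) of Theorem~\ref{t UES} is satisfied for this pair. Applying the implication (iii)$\Rightarrow$(i) of Theorem~\ref{t UES} with $(p,q) = (p_1,q_1)$, I conclude that system (\ref{e h}) is UES in $\X$ w.r.t.\ $\B^\ga$. This intermediate conclusion, being the $(p,q)$-free statement (i), is now available as a fact independent of the original pair.

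Next I would take an arbitrary second pair $(p_2,q_2)$ satisfying (\ref{e pqneq}), with the goal of establishing $(\l^{p_2},\l^{q_2})$-stability. Here I would invoke the reverse implication (i)$\Rightarrow$(iii) of Theorem~\ref{t UES}, this time reading the theorem with $(p,q) = (p_2,q_2)$. Since (i) has already been established and does not depend on the exponents, the theorem yields statement (iii) for the new pair, whose first clause is precisely $(\l^{p_2},\l^{q_2})$-stability of (\ref{e nh}). Because $(p_2,q_2)$ was arbitrary among pairs satisfying (\ref{e pqneq}), this gives the desired conclusion for all such pairs. The entire argument is thus a two-step ``round trip'' through the $(p,q)$-independent property (i): (iii)$_{(p_1,q_1)}$~$\Rightarrow$~(i)~$\Rightarrow$~(iii)$_{(p_2,q_2)}$.

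The only point requiring care — and the closest thing to an obstacle — is the bookkeeping of which hypotheses travel with the pair and which are fixed. One must be explicit that (\ref{e LPlinf}) is a hypothesis of the corollary, hence available for \emph{both} applications of the theorem, so that statement (iii) is genuinely equivalent to $(\l^p,\l^q)$-stability for each pair under the standing assumption. Once this is noted, no computation is needed: the corollary is an immediate logical consequence of the $(p,q)$-independence of UE stability together with the biconditional in Theorem~\ref{t UES}.
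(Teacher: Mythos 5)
Your proof is correct and is exactly the paper's argument: the paper derives Corollary~\ref{c lplq} from Theorem~\ref{t UES} by noting that UE stability (statement (i)) does not depend on $(p,q)$, so one passes from (iii) for one pair to (i) and back to (iii) for any other pair. Your write-up simply spells out the same round trip in more detail.
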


\begin{remark} \label{r lplq}
Example \ref{ex lplq ind} shows that assumption (\ref{e LPlinf}) in Corollary \ref{c lplq}
\emph{cannot be dropped},
however (\ref{e LPlinf}) can be relaxed to the condition that
\begin{equation} \label{e LPlinf subd}
\text{there exists } \ m \in \Z^- \ \ \text{such that} \quad
 \sup_{n \geq -m+1} \| L (n) \Pr_{[-n+1,m]} \|_{\B^\ga \to \X} < \infty \ .
\end{equation}
Indeed, 
$(\l^p,\l^q)$-stability of (\ref{e nh}) \emph{does not depend} on the parts
$L (n) \Pr_{[-\infty,-n]}$ of the operators $L(n)$ (see also Section \ref{s Ind ga}).
\end{remark}


\begin{definition} \label{d bound del}
Assume that $L(n)$ is defined on $\B_{\fin}$ for all $n \in \Z^+$ and that
$L (n,k) \in \L (\X,\X)$ for all $n,k \in \Z^+$.
Then (\ref{e nh}) is called \emph{a system of  difference
equations with bounded delay} if there exists $m \in \Z^-$
such that
 \begin{equation} \label{e bound del}
 L (n) \Pr_{[-\infty,m]}  = 0 \qquad \text{for all} \quad n \in \Z^+.
 \end{equation}
 If $m$ is the largest (nonpositive) number such that (\ref{e bound del}) holds, then $|m|$ is called
 the order of the system (\ref{e nh}).
\end{definition}

If (\ref{e nh}) is a system
with bounded delay of order $d$, then it
can be written in the form
\begin{eqnarray*}
x(n+1) = \sum_{k=0}^{d-1} L (n,k) x (n-k) + f (n), \quad n \in \Z_+ ,
\end{eqnarray*}
 and it can be considered on the whole vector space $\X^{\Z^-}$.
Any of the spaces $\B^\ga$ or $\B_{[j,0]}^0$, with $j \leq -d+1$ and $\ga \in \R$,
can be chosen as a phase space.
 Note that UE stability in $\X$ does not depend on that choice due to the obvious equality
\begin{eqnarray}
\label{e x=x(P)}
x (\cdot, 0, \vphi ; f ) = x (\cdot, 0, \Pr_{[-d+1,0]} \vphi ; f ) , \quad \vphi \in \X^{\Z^-} .
\end{eqnarray}
Thus, Theorem \ref{t UES} implies the following result.

\begin{corollary} \label{c UESbd}
 Let $\ga \in \R$, $1 \leq p \leq q \leq \infty$, and the pair $(p,q)$ be distinct from $(1,\infty)$.
If (\ref{e nh}) is a  system with bounded delay of order $d$, then the following statements are equivalent:
\begin{description}
\item[(i)] System (\ref{e h}) is UES in $\X$ with respect to $\B^\ga$
(or, equivalently, w.r.t. $\B_{[-d+1,0]}^\ga$).
\item[(ii)] System (\ref{e nh}) is  $(\l^p,\l^q)$-stable.
\end{description}
\end{corollary}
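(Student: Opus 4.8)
The plan is to read the corollary off Theorem \ref{t UES} by showing that, for a bounded-delay system, the boundedness hypothesis (\ref{e LPlinf}) is automatic, after first transferring both statements to an auxiliary weight $\ga' > 0$. This detour through $\ga'$ is needed because Theorem \ref{t UES} is stated only for strictly positive weights, whereas the corollary must allow every $\ga \in \R$; bounded delay is exactly what lets one change weights freely.

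First I would fix any $\ga' > 0$. Since the system has bounded delay of order $d$, Definition \ref{d bound del} gives $L(n)\Pr_{[-\infty,-d]} = 0$ for all $n$, hence $\|L(\cdot)\Pr_{[-\infty,-d]}\|_\infty = 0 < \infty$, so (\ref{e LPlinf}) holds with $m = -d$ (for every weight). Theorem \ref{t UES}, applied with the weight $\ga'$ and the given pair $(p,q)$, then shows that ``(\ref{e h}) is UES in $\X$ w.r.t. $\B^{\ga'}$'' is equivalent to ``(\ref{e nh}) is $(\l^p,\l^q)$-stable,'' the clause (\ref{e LPlinf}) in part (iii) being already satisfied. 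This is the desired equivalence for the single weight $\ga'$.

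It remains to detach both statements from the particular weight. For statement (ii) this is immediate: $(\l^p,\l^q)$-stability of Definition \ref{d lplq st} refers only to the solution $x(\cdot,0,\vz_\B;f)$ from zero initial data, which for a bounded-delay system is generated by the finite recursion $x(n+1) = \sum_{k=0}^{d-1} L(n,k)x(n-k) + f(n)$ and is thus literally the same function whatever phase space carries the equation; in particular it never probes coordinates below $-d+1$, so the ambiguity of $L(n)$ on those coordinates (cf. Remark \ref{r Lnk not def}) is irrelevant. For statement (i) I would invoke (\ref{e x=x(P)}): the solution $x(\cdot,0,\vphi;\fz)$ depends on $\vphi$ only through $\Pr_{[-d+1,0]}\vphi$, and on that finite window the weighted sup-norms of $\B^\ga$, $\B^{\ga'}$ and $\B_{[-d+1,0]}^\ga$ are mutually equivalent (the weights $e^{\ga m}$ and $e^{\ga' m}$ differ by a bounded factor over the finite range $-d+1 \leq m \leq 0$). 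Hence the estimate (\ref{e ExStX}) transfers between these phase spaces, so all three versions of (i) coincide, and in particular coincide with UES in $\X$ w.r.t. $\B^{\ga'}$. Chaining these equivalences with the previous paragraph gives (i) $\Leftrightarrow$ (ii) for the originally prescribed $\ga$.

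The argument is essentially bookkeeping once Theorem \ref{t UES} is granted; the only place demanding a little care — the nearest thing to an obstacle — is the claim that $(\l^p,\l^q)$-stability is genuinely weight-independent, which hinges on the observation that the zero-initial-data solution of a bounded-delay equation is insensitive to how the tail of the prehistory is normed (or even to whether $L(n)$ is well defined there).
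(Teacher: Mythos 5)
Your proposal is correct and follows essentially the same route as the paper: apply Theorem \ref{t UES} for a positive weight, where condition (\ref{e LPlinf}) holds trivially with $m=-d$, and then transfer to arbitrary $\ga$ via (\ref{e x=x(P)}) together with the equivalence of the $\B^\ga$-norms on the range of $\Pr_{[-d+1,0]}$. The only difference is that you spell out the weight-independence of $(\l^p,\l^q)$-stability explicitly, which the paper leaves implicit since Definition \ref{d lplq st} never references the phase-space norm.
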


\begin{proof}
For $\B^\ga$ with $\ga >0$, the corollary follows from Theorem \ref{t UES}
and the fact that (\ref{e LPlinf}) is fulfilled for any system with bounded delay.
Now note that the $\B^\ga$-norms are equivalent for all $\ga \in \R$ on the subspace of
 $\vphi \in \X^{  \Z^-}$ such that
$\vphi^m =\vz_\X$ for $m \leq  - d$ (that is, on the range of $\Pr_{[-d+1,0]}$).
Combining this and (\ref{e x=x(P)}) completes the proof.
\end{proof}

The connection between UE stability and $(\l^p,\l^q)$-stability of systems with bounded delay
was considered
in \cite{BB_FDE04,BB_JMAA05,BB_UnbDel}. Corollary \ref{c UESbd}
remove the assumption $\sup_{n \in \Z^+} \| L (n) \|_{\B^0 \to \X} < \infty$ imposed
in \cite{BB_FDE04,BB_JMAA05,BB_UnbDel} and extends the results of these papers to the case 
$1\leq p< q <\infty$.

\subsection{Reduction of order, representation theorem, and
auxiliary results \label{s RedOrd}}

Let $\ga \in \R$ and let $L:\Z^+ \to \L (\B^\ga,\X)$ define
system (\ref{e nh}) on $\B^\ga$.
In this subsection, we show that system (\ref{e nh})
can be written as a system of first order difference equations in the
space $\B^\ga$.

Recall that the operators $\Sh$, $\E_0$, and $\Pr_{\{0\}}$ are defined in Section \ref{ss op}.

Let us define operators $D (n): \B^{\ga} \to \B^{\ga}$ by
\begin{equation} \label{e D}
D (n) := \E_0 L(n) + \Sh , \quad n \in \Z^+ .
\end{equation}
It follows from $L(n) \in \L (\B^\ga,\X)$ and (\ref{e |Sh|}) that operators $D (n) $ are bounded in
$\B^{\ga}$ and
\begin{eqnarray} \label{e |D|}
\| D(n) \|_{\B^{\ga} \to \B^{\ga}} = \max \{ \ \| L (n) \|_{ \B^{\ga} \to \X } \ , \ e^{-\ga} \ \} .
\end{eqnarray}

Let $y$ and $g$ be functions from $\Z^+$ to $\B^{\ga}$.
Let $\vphi \in \B^{\ga}$ and $\tau \in \Z^+$.
Consider the initial value problem
\begin{eqnarray} \label{e ndnh}
y (n+1) & = & D (n) y(n) + g (n), \quad n \in \Z_\tau^+ \ ,
 \\
y(\tau) & = & \vphi \ , \label{e y0=phi}
\end{eqnarray}
and denote its solution by $y(\cdot , \tau, \vphi; g) $.
Then $y(n) = x_n (\tau,\vphi;f)$ is the solution of (\ref{e ndnh}), (\ref{e y0=phi}) with $g= \E_0 f$ .
In other words,
\begin{equation} \label{e xn=yn}
x_n (\tau,\vphi;f)=y(n,\tau,\vphi;\E_0 f) , \quad n \in \Z_\tau^+ .
\end{equation}

Let us show that it is possible to express
$y(\cdot , 0, \vz_\B; g) $ with general
$g : \Z^+ \to \B^{\ga}$ in terms of solutions of (\ref{e nh}) and shift operators.
Recall that we use the sum convention (\ref{e sumconv}) and that for $g : \Z^+ \to \B^\ga$,
the function that maps $n \in \Z^+$ to
the $m$-th coordinate $(g(n))^{[m]}$ of $g(n)$ is denoted by $g^{[m]}$.

\begin{proposition}\label{p rep}
For any $g : \Z^+ \to \B^{\ga}$,
\begin{eqnarray}\label{e rep1}
 y (n,0,\vz_\B;g) & = & x_n (0,\vz_\B;g^{[0]} + L \h) + \h (n),  \quad \text{where}  \\
\h (n) = (H g) (n)& := & \sum_{k=0}^{n-1} \Sh^{n-k-1} (\I-\Pr_{\{0\}}) g (k) , \quad n \in \Z^+.
\label{e reph}
\end{eqnarray}
\end{proposition}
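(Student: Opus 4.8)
The plan is to verify the representation formula \eqref{e rep1}--\eqref{e reph} directly, by showing that the right-hand side of \eqref{e rep1} satisfies the same initial value problem \eqref{e ndnh}--\eqref{e y0=phi} (with $\tau=0$, $\vphi = \vz_\B$, and the given $g$) as the left-hand side does; uniqueness of solutions then forces equality. Concretely, set $u(n) := x_n(0,\vz_\B;g^{[0]}+L\h) + \h(n)$, where $\h = Hg$ is defined by \eqref{e reph}. I would first check the initial condition: since $\h(0) = \sum_{k=0}^{-1}(\dots) = \vz_\B$ by the sum convention \eqref{e sumconv}, and since $x_0(0,\vz_\B;\,\cdot\,) = \vz_\B$ by definition of a solution through $(0,\vz_\B)$, we get $u(0) = \vz_\B = y(0,0,\vz_\B;g)$. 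So both sides agree at $n=0$.

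The core is to verify that $u$ satisfies the recurrence $u(n+1) = D(n)u(n) + g(n)$. Recalling $D(n) = \E_0 L(n) + \Sh$ from \eqref{e D}, I would compute $u(n+1)$ using two facts. First, by \eqref{e xn=yn} applied with the forcing term $g^{[0]}+L\h$, the prehistory vector $x_{n+1}(0,\vz_\B;g^{[0]}+L\h)$ equals $y(n+1,0,\vz_\B;\E_0(g^{[0]}+L\h))$, so it obeys the first-order recurrence
\[
x_{n+1}(\cdots) = D(n)\,x_n(\cdots) + \E_0\bigl(g^{[0]}(n) + L(n)\h(n)\bigr).
\]
Second, I must track how $\h$ evolves. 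From \eqref{e reph}, splitting off the top term $k=n$ of the sum defining $\h(n+1)$ and shifting the index in the remaining terms, one obtains the recurrence
\[
\h(n+1) = \Sh\,\h(n) + (\I - \Pr_{\{0\}})\,g(n), \qquad \h(0)=\vz_\B,
\]
which is the key computation; here the factor $\Sh^{(n+1)-k-1} = \Sh\cdot\Sh^{n-k-1}$ produces the $\Sh\h(n)$ term and the $k=n$ summand gives $(\I-\Pr_{\{0\}})g(n)$.

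Adding these two recurrences, $u(n+1) = D(n)x_n(\cdots) + \Sh\h(n) + \E_0 g^{[0]}(n) + \E_0 L(n)\h(n) + (\I-\Pr_{\{0\}})g(n)$. The main obstacle, and the step requiring care, is to reassemble this into $D(n)u(n) + g(n)$. Using $D(n) = \E_0 L(n) + \Sh$ again, one has $D(n)u(n) = \E_0 L(n)x_n(\cdots) + \Sh\,x_n(\cdots) + \E_0 L(n)\h(n) + \Sh\h(n)$, so after cancelling the matching $\E_0 L(n)\h(n)$ and $\Sh\h(n)$ terms the identity reduces to checking that the leftover pieces combine to $g(n)$. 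Here I would invoke the structural identities of the shift and projection operators: the point is that $\E_0\,g^{[0]}(n) = \Pr_{\{0\}}g(n)$, since $\E_0$ applied to the top coordinate reconstructs exactly the $\Pr_{\{0\}}$-part, so that $\E_0 g^{[0]}(n) + (\I-\Pr_{\{0\}})g(n) = g(n)$. The only subtlety is confirming that the remaining $D(n)x_n(\cdots)$ contribution matches the one produced above — this amounts to observing that $x_n(\cdots)$ is a genuine prehistory vector, so $\Sh x_n$ and $\E_0 L(n)x_n$ are precisely the terms appearing in its own recurrence, with no extra $\Pr_{\{0\}}$-correction needed. Once these operator identities are nailed down, $u$ solves \eqref{e ndnh} with the same data as $y(\cdot,0,\vz_\B;g)$, and uniqueness of solutions completes the proof.
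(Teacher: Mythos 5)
Your proposal is correct and follows essentially the same route as the paper: the paper phrases the argument as an induction on $n$, substituting the hypothesis into the recurrence $y(n+1)=D(n)y(n)+g(n)$, while you verify that the right-hand side satisfies the same initial value problem and invoke uniqueness, which is the same computation. The key ingredients — the recurrence $\h(n+1)=\Sh\,\h(n)+(\I-\Pr_{\{0\}})g(n)$, the first-order recurrence for $x_{n+1}(0,\vz_\B;g^{[0]}+L\h)$ via (\ref{e xn=yn}), and the identity $\E_0 g^{[0]}(n)+(\I-\Pr_{\{0\}})g(n)=g(n)$ — all match the paper's proof.
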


\begin{proof}
For $n = 0$, one can see that $\h(0) = x_0 (0,\vz_\B ; g^{[0]} + L \h) = \vz_\B$,
and therefore (\ref{e rep1}) is trivial.
For $n=1$, (\ref{e nh}) implies
\[
x (1,0,\vz_\B ; g^{[0]} + L \h) = L(0) \vz_\B + g^{[0]} (0) + L(0) \h (0) = g^{[0]} (0) \ .
\]
So the vector-column $x_1 (0,\vz_\B ; g^{[0]} + L \h)$ has $0$-th coordinate equal
to $g^{[0]} (0) $ and all other coordinates equal to zero,
i.e.,
\[
x_1 (0,\vz_\B ; g^{[0]} + L \h) = \E_0 g^{[0]} (0).
\]
Since $\h (1) := (\I-\Pr_{\{0\}}) g (0)$, we see that
\[
x_1  (0,\vz_\B ; g^{[0]} + L \h) + \h (1) =
\E_0 g^{[0]} (0) + (\I-\Pr_{\{0\}}) g (0) = g(0) = y (1,0,\vz_\B;g),
\]
and so (\ref{e rep1}) holds true for $n=1$.

Let us assume (\ref{e rep1}) for certain $n \in \N$ and prove it for $n+1$.
First, note that
\begin{eqnarray}
\h (n+1) = \Sh \h (n) + (I-\Pr_{\{0\}}) g (n) , \quad n \in \Z^+ . \label{e hn+1}
\end{eqnarray}
Indeed,
\begin{multline*}
 \Sh \h (n) + (I-\Pr_{\{0\}}) g (n)   =
 \Sh \sum_{k=0}^{n-1} \Sh^{n-k-1} (\I-\Pr_{\{0\}}) g (k) + (I-\Pr_{\{0\}}) g (n)
 = \\
 =
 \sum_{k=0}^{n-1} \Sh^{n-k} (\I-\Pr_{\{0\}}) g (k) + (I-\Pr_{\{0\}}) g (n)
  =
\sum_{k=0}^{n} \Sh^{n-k} (\I-\Pr_{\{0\}}) g (k) = \h (n+1) \ .
\end{multline*}
From (\ref{e xn=yn}) and (\ref{e ndnh}), one can get
\begin{eqnarray}
D (n) x_n (0,\vz_\B ; g^{[0]} + L \h) = x_{n+1} (0,\vz_\B ; g^{[0]} + L \h) -
 \E_0 \left[ g^{[0]} (n) + L(n) \h (n) \right] .\label{e xn+1}
\end{eqnarray}
Now we substitute (\ref{e rep1}) which is assumed to be valid for $n $ into
(\ref{e ndnh}) and get
\begin{eqnarray}
y (n+1,0,\vz_\B ; g) = D (n) x_n (0, \vz_\B ; g^{[0]} + L \h) + D (n) \h (n)  + g(n) \ . \label{e yn+1}
\end{eqnarray}
Modifying the last two terms with the use of (\ref{e D}),  we get
\begin{eqnarray*}
D (n) \h (n) + g (n) & = & \E_0 L(n) \h (n) + \Sh \h (n) + g(n) = \\ & = &  \E_0 L(n) \h (n) + \Sh \h (n)
 +   \E_0 g^{[0]} (n) + (I-\Pr_{\{0\}}) g (n) .
\end{eqnarray*}
Equality (\ref{e hn+1}) implies
\begin{eqnarray*}
D (n) \h (n) + g (n) & = & \bigl[ \E_0 L(n) \h (n) +  \E_0 g^{[0]} (n)\bigr] + \bigl[\Sh \h (n)
 + (I-\Pr_{\{0\}}) g (n) \bigr]
= \\ & = & \E_0 \left[ L(n) \h (n) +  g^{[0]} (n) \right] + \h (n+1) .
\end{eqnarray*}
Substituting the last equality and (\ref{e xn+1}) into
(\ref{e yn+1}), we get
\[
y (n+1,0,\vz_\B ; g) = x_{n+1} (0, \vz_B ;  g^{[0]} + L \h) + \h (n+1)  \ .
\]
This is equality (\ref{e rep1}) for $n+1$.
Induction completes the proof.
\end{proof}

\begin{proposition} \label{p hlp}
Let $\ga >0$, $1 \leq p \leq \infty$, $g \in \l^p (\B^\ga)$, and let $\h$ be the function defined in 
(\ref{e reph}).
Then \
$ \| \h \|_p \leq (1-e^{-\ga})^{-1} \| g \|_p $.

\end{proposition}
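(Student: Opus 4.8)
The plan is to reduce the claim to a scalar convolution estimate and then apply the triangle inequality for the $\l^p$-norm in its countable form (Minkowski's inequality), which has the advantage of handling all $p \in [1,\infty]$ uniformly. First I would pass from the $\B^\ga$-valued sum (\ref{e reph}) to a pointwise scalar bound on $|\h(n)|_{\B^\ga}$. Taking $\B^\ga$-seminorms in (\ref{e reph}) and using the triangle inequality gives
\[
|\h(n)|_{\B^\ga} \le \sum_{k=0}^{n-1} |\Sh^{n-k-1}(\I - \Pr_{\{0\}}) g(k)|_{\B^\ga}.
\]
Now (\ref{e |Sh|}) turns each shift into a factor $e^{-(n-k-1)\ga}$, while (\ref{e |phi|P0}) yields $|(\I-\Pr_{\{0\}})g(k)|_{\B^\ga} \le |g(k)|_{\B^\ga}$. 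Hence, writing $b(k) := |g(k)|_{\B^\ga}$ (so that $\|b\|_p = \|g\|_p$),
\[
|\h(n)|_{\B^\ga} \le \sum_{k=0}^{n-1} e^{-(n-1-k)\ga}\, b(k) =: c(n), \qquad n \in \Z^+.
\]

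Next I would estimate $\|c\|_p$. Reindexing by $j = n-1-k$, each summand is a nonnegative geometric weight times a backward shift of $b$ (extended by zero for negative arguments): $c(n) = \sum_{j \ge 0} e^{-j\ga}\, b_j(n)$, where $b_j(n) := b(n-1-j)$ for $n \ge j+1$ and $b_j(n) := 0$ otherwise. Each $b_j$ is a shift of $b$ padded with zeros, so $\|b_j\|_p \le \|b\|_p$. Applying the triangle inequality for $\|\cdot\|_p$ to this absolutely convergent, nonnegative series and summing the geometric series — this is the step where $\ga > 0$ is essential — gives
\[
\|c\|_p \le \sum_{j \ge 0} e^{-j\ga}\|b_j\|_p \le \Big(\sum_{j \ge 0} e^{-j\ga}\Big)\|b\|_p = (1-e^{-\ga})^{-1}\|g\|_p.
\]
Since $0 \le |\h(n)|_{\B^\ga} \le c(n)$, monotonicity of the $\l^p$-norm under pointwise domination yields $\|\h\|_p \le \|c\|_p$, which is the asserted bound.

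There is no serious obstacle here; the only point requiring care is that the final inequality must hold simultaneously for $p=1$, all finite $p$, and $p=\infty$. This is why I prefer the ``Minkowski on the series'' formulation over a direct appeal to Young's convolution inequality: the countable triangle inequality $\|\sum_j b_j\|_p \le \sum_j \|b_j\|_p$ holds verbatim for every $p \in [1,\infty]$ (for $p=\infty$ it is the supremum estimate, for finite $p$ it is Minkowski's inequality), so the endpoints need no separate treatment. I would also note in passing that the interchange of summation and $\l^p$-norm is justified by the nonnegativity of the terms together with the convergence of the geometric factor.
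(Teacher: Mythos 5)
Your proof is correct and follows essentially the same route as the paper's: both reduce to the pointwise bound $|\h(n)|_{\B^\ga}\le\sum_{k=0}^{n-1}e^{-(n-1-k)\ga}\,|g(k)|_{\B^\ga}$ (you via (\ref{e |Sh|}) and (\ref{e |phi|P0}), the paper via the explicit coordinate formula (\ref{e hm=sumg})) and then control the $\l^p$-norm of this convolution by the $\l^1$-norm $(1-e^{-\ga})^{-1}$ of the geometric kernel. The only difference is that the paper invokes Young's inequality for the last step while you reprove the relevant $\l^1 * \l^p \to \l^p$ case via the countable triangle inequality, which is the standard proof of that case and treats $p=\infty$ on the same footing.
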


\begin{proof}
Recall that $h(0) = 0$ and consider $n \in \N$.
Then for $m \in \Z^-$, $m$-th coordinate of $\h(n)$ can be written
in the following way
\begin{eqnarray*}
\h^{[m]} (n) &  = \sum_{k=0}^{n-1} \bigl( \Sh^{n-k-1} (\I-\Pr_{\{0\}}) g (k) \bigr)^{[m]} =
\sum_{k=0}^{n-1} \left\{
\begin{array}{ll}
0 , & m \geq - n + k +1 \\
g^{[m+n-k-1]} (k)  , & m \leq - n + k
\end{array} \right.
\end{eqnarray*}
Since nonzero terms in the last sum correspond to
$k \in \Z^+$ such that $k \geq m+n$, we have
\begin{equation} \label{e hm=sumg}
\h^{[m]} (n) = \sum_{k=\max \{0, m+n\} }^{n-1}  g^{[m+n-k-1]} (k) \ .
\end{equation}
(Due to the sum convention (\ref{e sumconv}) this formula is also valid
for $n=0$).

Using the last formula, we estimate $| \h (n) |_{\B^\ga}$ and then $\| h \|_p$:
\begin{eqnarray}
| \h (n) |_{\B^\ga} & = & \sup_{m \in \Z^-} e^{m\ga}
\left| \sum_{k=\max \{0, m+n \} }^{n-1}  g^{[m+n-k-1]} (k) \right|
\leq \notag \\ & \leq &
\sup_{m \in \Z^-}
 \sum_{k=\max \{ 0, m+n \} }^{n-1} e^{(-n+k+1)\ga} e^{(m+n-k-1) \ga}  \left| g^{[m+n-k-1]} (k) \right|
\leq \notag \\ & \leq &
\sup_{m \in \Z^-}
 \sum_{k=\max \{ 0, m+n \} }^{n-1} e^{(-n+k+1)\ga}
 | g (k) |_{\B^\ga} =
 \sum_{k=0}^{n-1} e^{-((n-k)-1)\ga} | g (k) |_{\B^\ga} =
 (\e * \g) (n) , \label{e est h(n)}
\end{eqnarray}
where $ (\e * \g) ({\scriptstyle \bullet}) =
\sum\limits_{k=0}^{ \bullet } \e ({\scriptstyle \bullet} - k) \g (k)$
is the discrete convolution of the functions
\begin{equation}
\g \in \l^p (\R), \quad \g (n) := | g (n) |_{\B^\ga} , \quad
\text{and} \quad \e \in \l^1 (\R), \quad
\e (n) = \left\{
\begin{array}{rr}
0 , & n = 0 \\
e^{-(n-1)\ga}  , & n \geq 1
\end{array} \right. . \label{e def g e}
\end{equation}
Using Young's inequality for convolutions (see e.g. \cite[Problem VI.11.10]{DSh58}),
we get
\[
\| \h \|_p \leq \| \e * \g \|_p \leq \| \e \|_1 \| \g \|_p =
(1-e^{-\ga})^{-1} \| g \|_p \ .
\]
\end{proof}

\begin{proposition} \label{p lp}
Let $1 \leq p \leq \infty$, $\ga >0$, and let $x_0 = [x(m)]_{m=-\infty}^{0}$ belong to $ \B^\ga$. Then
\[
x_\bullet  \in \l^p (\Z^+,\B^{\ga}) \quad \text{if and only if} \quad x (\cdot) \in \l^p (\Z^+,\X).
\]
 More precisely,
\begin{eqnarray}
\| x (\cdot) \|_\infty & \leq  \| x_\bullet \|_\infty & =
\max \{ | x_{0} |_{\B^\ga} , \| x (\cdot) \|_\infty \},
\label{e ||<||linf}\\
\| x (\cdot) \|_p^p & \leq  \| x_\bullet \|_p^p & \leq
\frac {1} {1 - e^{-p\ga}}
\left( | x_{0} |_{\B^\ga}^p + \| x (\cdot) \|_p^p \right),
\quad 1 \leq p < \infty .
\label{e ||<||lp}
\end{eqnarray}
\end{proposition}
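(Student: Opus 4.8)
The plan is to deduce both the ``if and only if'' and the quantitative bounds in \eqref{e ||<||linf}--\eqref{e ||<||lp} directly from the definition $|x_n|_{\B^\ga} = \sup_{m \leq 0} |x(n+m)| e^{\ga m}$, handling $p=\infty$ and $1 \leq p < \infty$ separately. The lower bounds are immediate in both cases: taking $m=0$ in the supremum gives $|x(n)| \leq |x_n|_{\B^\ga}$ for every $n \in \Z^+$, and passing to the $\l^\infty$- or $\l^p$-norm in $n$ yields $\|x(\cdot)\|_\infty \leq \|x_\bullet\|_\infty$ and $\|x(\cdot)\|_p^p \leq \|x_\bullet\|_p^p$.

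For $p = \infty$ I would write $\|x_\bullet\|_\infty = \sup_{n \geq 0}\sup_{m \leq 0}|x(n+m)| e^{\ga m}$ and classify each term according to the sign of $j := n+m$. For $j \geq 0$ the weight satisfies $e^{\ga m} \leq 1$, so $|x(j)| e^{\ga m} \leq \|x(\cdot)\|_\infty$; for $j < 0$ one writes $e^{\ga m} = e^{\ga j} e^{-\ga n} \leq e^{\ga j}$, whence $|x(j)| e^{\ga m} \leq |x_0|_{\B^\ga}$. This gives $\|x_\bullet\|_\infty \leq \max\{|x_0|_{\B^\ga}, \|x(\cdot)\|_\infty\}$, and the reverse inequality follows by specializing to $n=0$ and to $m=0$ respectively, so equality holds.

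For $1 \leq p < \infty$ the substantive point is the upper bound in \eqref{e ||<||lp}, and the crucial idea is to \emph{avoid} the lossy estimate $\sup_m \leq (\sum_m)^{1/p}$ applied over all $m \leq 0$. Instead, setting $j = n+m$ and splitting the index set $\{j \leq n\}$ into the history indices $\{j \leq 0\}$ and the forward indices $\{1 \leq j \leq n\}$, I would write $|x_n|_{\B^\ga} = \max\{A_n, B_n\}$ with $A_n = e^{-\ga n}\sup_{j \leq 0}|x(j)| e^{\ga j} = e^{-\ga n}|x_0|_{\B^\ga}$ and $B_n = \sup_{1 \leq j \leq n}|x(j)| e^{-\ga(n-j)}$. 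Applying $\max\{a,b\}^p \leq a^p + b^p$ and then bounding the finite supremum by $B_n^p \leq \sum_{j=1}^n |x(j)|^p e^{-p\ga(n-j)}$ produces the pointwise-in-$n$ estimate $|x_n|_{\B^\ga}^p \leq e^{-p\ga n}|x_0|_{\B^\ga}^p + \sum_{j=1}^n |x(j)|^p e^{-p\ga(n-j)}$.

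Summing this over $n \geq 0$ and interchanging the order of the nonnegative double sum then closes the argument: the first term sums to $|x_0|_{\B^\ga}^p/(1-e^{-p\ga})$, while collecting, for each fixed $j \geq 1$, the contributions from all $n \geq j$ gives the geometric factor $\sum_{l \geq 0}e^{-p\ga l} = 1/(1-e^{-p\ga})$ (here $\ga > 0$ is precisely what makes the series converge), so the second term is at most $\|x(\cdot)\|_p^p/(1-e^{-p\ga})$. Adding these yields \eqref{e ||<||lp}, and the equivalence $x_\bullet \in \l^p(\B^\ga) \Leftrightarrow x(\cdot) \in \l^p(\X)$ then follows from the two-sided estimates together with the standing hypothesis $|x_0|_{\B^\ga} < \infty$. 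I expect the main obstacle to be exactly this splitting: the naive bound over all $m \leq 0$ would replace the clean term $|x_0|_{\B^\ga}^p$ by $\sum_{j \leq 0}|x(j)|^p e^{p\ga j}$ and spoil the constant, so isolating the history part so that it contributes precisely $|x_0|_{\B^\ga}^p$ is what makes the stated inequality sharp.
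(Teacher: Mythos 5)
Your proof is correct and follows essentially the same route as the paper's: split the supremum defining $|x_n|_{\B^\ga}$ into the prehistory part (which collapses to $e^{-\ga n}|x_0|_{\B^\ga}$, the paper routing this through $|x_{-1}|_{\B^\ga}\leq e^{\ga}|x_0|_{\B^\ga}$ instead) and the finite forward part, bound the latter supremum by a sum, then sum over $n$ and interchange the order of summation to extract the geometric factor $(1-e^{-p\ga})^{-1}$. The only difference is the placement of the cut between history and future indices, which is immaterial, and your explicit treatment of the $p=\infty$ equality, which the paper dismisses as obvious.
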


\begin{proof}
Formula (\ref{e ||<||linf}) and the first inequality in
(\ref{e ||<||lp}) are obvious.
Let us prove the second inequality in
(\ref{e ||<||lp}).
Note that
$| x_{-1} |_{\B^\ga} \leq e^\ga | x_0 |_{\B^\ga}$ and
for $n \in \Z^+$,
\[
| x_n |_{\B^\ga}^p = \sup_{m \in \Z^-}    |e^{\ga m} x(n+m)|^p
\leq e^{p \ga (-n-1)} | x_{-1} |_{\B^\ga}^p + \sum_{m=-n}^{0} e^{p \ga m} | x(n+m) |^p  .
\]
Therefore,
\begin{eqnarray*}
\sum_{n=0}^{+\infty} | x_n |_{\B^\ga}^p
 & \leq &
| x_{-1} |_{\B^\ga}^p \sum_{n=0}^{+\infty} e^{-p \ga (n+1)}
+ \sum_{m=-\infty}^0 e^{p \ga m} \sum_{n=-m}^{+\infty} |x(n+m)|^p
= \\ & = &
\frac {e^{-p\ga}} {1 - e^{-p\ga}} | x_{-1} |_{\B^\ga}^p
+ \frac {1}{1 - e^{-p\ga}} \| x (\cdot) \|_p^p \leq
\frac {1} {1 - e^{-p\ga}}
\left( | x_{0} |_{\B^\ga}^p + \| x (\cdot) \|_p^p \right) .
\end{eqnarray*}
\end{proof}

\begin{remark} \label{r lp}
Clearly, if $\ga =0$, the proposition is valid only for $p = \infty$.
In this case (\ref{e ||<||linf}) still holds.
\end{remark}

Note that UE  stability of (\ref{e h}) in $\B^{\ga}$ coincides with UE stability
(in the sense of Definition \ref{d ordUES}) of the homogeneous system
\begin{equation}
\label{e ndh}
y (n+1) = D (n) y(n) , 
 \qquad y \in \Ssp_+ (\B^{\ga}) \ .
\end{equation}
corresponding to (\ref{e ndnh}).
 On the other hand system (\ref{e h}) is UES in $\X$ if it is UES in $\B^{\ga}$.
If $\ga>0$ the converse is also true (cf. \cite[Section 7.2]{HMN91}
for the differential equation case).

\begin{proposition} \label{p UES}
Let $\ga>0$. Then the following statements are equivalent
\begin{description}
\item[(i)] System (\ref{e h}) is UES in $\X$ w.r.t. $\B^{\ga}$.
\item[(ii)] System (\ref{e h}) is UES in $\B^{\ga}$.
\item[(iii)] System (\ref{e ndh}) is UES.
\end{description}
\end{proposition}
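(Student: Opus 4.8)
The plan is to prove the chain of equivalences by establishing (ii)$\Leftrightarrow$(iii) directly from the definitions and then (i)$\Leftrightarrow$(ii) using the specific structure of the exponentially fading phase space $\B^\ga$. The equivalence (ii)$\Leftrightarrow$(iii) is essentially a matter of unwinding definitions: by (\ref{e xn=yn}), the prehistory $x_n(\tau,\vphi;\fz)$ of a solution of the homogeneous system (\ref{e h}) equals $y(n,\tau,\vphi;\fz)$, the solution of the homogeneous first-order system (\ref{e ndh}). Since UE stability in $\B^\ga$ (Definition \ref{d UESn}(2)) asks for the decay estimate $|x_n(\tau,\vphi;\fz)|_{\B^\ga} \leq K e^{-\nu(n-\tau)} |\vphi|_{\B^\ga}$, and UE stability of (\ref{e ndh}) in the sense of Definition \ref{d ordUES}(1) asks for $|y(n,\tau,\vphi;\fz)|_{\B^\ga} \leq K_1 e^{-\nu_1(n-\tau)}|\vphi|_{\B^\ga}$, these two estimates are literally the same statement once one identifies the two solutions. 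So (ii)$\Leftrightarrow$(iii) should follow with essentially no work beyond citing (\ref{e xn=yn}).

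The substantive content is (i)$\Leftrightarrow$(ii). The implication (ii)$\Rightarrow$(i) is immediate and does not even need $\ga>0$: since $|x(n,\tau,\vphi;\fz)| \leq |x_n(\tau,\vphi;\fz)|_{\B^\ga}$ (the $\X$-norm of the zeroth coordinate is dominated by the $\B^\ga$-seminorm, as $e^{\ga\cdot 0}=1$), any $\B^\ga$-decay estimate forces the same decay on the $\X$-norm. This is the remark already made in the paragraph preceding Proposition \ref{p UES}. The real work is (i)$\Rightarrow$(ii), and here is where I expect the main obstacle. Given UE stability in $\X$, i.e.\ $|x(n,\tau,\vphi;\fz)| \leq K e^{-\nu(n-\tau)}|\vphi|_{\B^\ga}$, I must control the entire prehistory seminorm $|x_n(\tau,\vphi;\fz)|_{\B^\ga} = \sup_{m\leq 0} |x(n+m,\tau,\vphi;\fz)| e^{\ga m}$.

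First I would split this supremum at the index $m = \tau - n$, i.e.\ at the time $n+m=\tau$, separating the coordinates that reach back into the prescribed initial data $\vphi$ (where $n+m<\tau$, so $x(n+m)=\vphi^{[n+m-\tau]}$) from those governed by the solution for $n+m\geq\tau$. For the ``future'' coordinates $\tau \leq n+m \leq n$ I would apply the $\X$-estimate from (i) to each $|x(n+m,\tau,\vphi;\fz)|$, obtaining a bound $K e^{-\nu(n+m-\tau)}|\vphi|_{\B^\ga}$, and then multiply by $e^{\ga m}$ and take the supremum over $m$; since $m\leq 0$ the factor $e^{\ga m}\leq 1$ only helps, and the decay in $n$ is retained. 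For the ``past'' coordinates $n+m<\tau$ I would write $|x(n+m)| e^{\ga m} = |\vphi^{[n+m-\tau]}| e^{\ga(n+m-\tau)} e^{\ga(\tau - n)} \leq |\vphi|_{\B^\ga}\, e^{-\ga(n-\tau)}$, which decays exponentially in $n-\tau$ precisely because $\ga>0$ — this is exactly the point where the fading hypothesis is indispensable (cf.\ Remark \ref{r exPmn}(2)). Taking the maximum of the two contributions yields $|x_n(\tau,\vphi;\fz)|_{\B^\ga} \leq K' e^{-\nu'(n-\tau)}|\vphi|_{\B^\ga}$ with $\nu' = \min\{\nu,\ga\}$ and a suitable $K'$, which is (ii). The only delicate bookkeeping is handling the two regimes of the supremum uniformly and matching the decay rates; the conceptual heart is simply that in a fading space the weight $e^{\ga m}$ automatically suppresses the contribution of the old initial data, so exponential decay of the current state propagates to exponential decay of the whole weighted history.
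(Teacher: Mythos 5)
Your proposal is correct and proves the same statement, but it organizes the key implication (i)$\Rightarrow$(ii) differently from the paper. The paper works entirely inside the phase space: it writes $|x(n,\tau,\vphi;\fz)| = |\Pr_{\{0\}}y(n,\tau,\vphi;\fz)|_{\B^\ga}$, observes that $|(\I-\Pr_{\{0\}})y(n,\tau,\vphi;\fz)|_{\B^\ga} = e^{-\ga}|y(n-1,\tau,\vphi;\fz)|_{\B^\ga}$, and then runs an induction in $n$ using the max-decomposition (\ref{e |phi|P0}) to propagate the bound $Ke^{-\nu_1(n-\tau)}|\vphi|_{\B^\ga}$ with $\nu_1=\min\{\nu,\ga\}$. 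You instead unroll this induction by hand, splitting the defining supremum $\sup_{m\le 0}|x(n+m)|e^{\ga m}$ at $m=\tau-n$ into a ``solution'' part controlled by the $\X$-estimate and an ``initial data'' part controlled by the weight. The two arguments rest on exactly the same two facts (the $\X$-decay of the newest coordinate and the per-step factor $e^{-\ga}$ on the old ones), so neither buys anything the other does not; yours is more explicit coordinate bookkeeping, the paper's is a cleaner one-step recursion. One sentence of yours is imprecise and worth repairing: for the coordinates with $\tau\le n+m\le n$ it is \emph{not} enough to discard the factor $e^{\ga m}\le 1$, since $\sup_{\tau-n\le m\le 0}e^{-\nu(n+m-\tau)}=1$ and all decay in $n$ would be lost; you must keep the product and note that
\[
\sup_{\tau-n\le m\le 0} e^{\ga m}\,e^{-\nu(n+m-\tau)} \;\le\; e^{-\min\{\nu,\ga\}\,(n-\tau)} ,
\]
which is consistent with (and in fact forces) the rate $\nu'=\min\{\nu,\ga\}$ you state at the end. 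With that correction the argument is complete; your identification of (ii) with (iii) via (\ref{e xn=yn}) and the trivial direction (ii)$\Rightarrow$(i) match the paper.
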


\begin{proof}
We have only to prove that (\ref{e ExStX}) implies
(\ref{e ExStW}) with $\W=\B^\ga$ for the solution $y(n, \tau, \vphi; \fz) $
of (\ref{e ndh}).

Indeed,
\[
|x (n, \tau, \vphi; \fz)| = |\Pr_{\{0\}} y(n, \tau, \vphi; \fz)
|_{\B^{\ga}},
\]
so (\ref{e ExStX}) can be rewritten as
\begin{equation*}
|\Pr_{\{0\}} y(n,\tau, \vphi; \fz) |_{\B^{\ga}} \leq  K e^{-\nu (n-\tau)} | \vphi |_{\B^{\ga}}  ,
\quad n \geq \tau \geq 0,
\end{equation*}
(recall that $K \geq 1$ and $\nu >0$).
Note that
\begin{equation*}
| (\I - \Pr_{\{0\}}) y(n,\tau, \vphi; \fz) |_{\B^{\ga}}
 =  e^{-\ga} |  y(n-1,\tau, \vphi; \fz) |_{\B^{\ga}} .
\end{equation*}
Using (\ref{e |phi|P0}), we see that the assumption
\[
| y(n-1,\tau, \vphi; \fz) |_{\B^{\ga}} \leq  K e^{- \nu_1  (n-1-\tau) } | \vphi |_{\B^{\ga}}, \quad
\text{where} \ \nu_1 := \min \{\nu , \ga \},
\]
implies
\[
| y(n,\tau, \vphi; \fz) |_{\B^{\ga}}
\leq \max \{ \, K e^{-\nu (n-\tau)} | \vphi |_{\B^{\ga}} \ ,
\ K e^{-\ga} e^{- \nu_1  (n-1-\tau) } | \vphi |_{\B^{\ga}}  \, \} \leq
K e^{- \nu_1  (n-\tau) } | \vphi |_{\B^{\ga}} .
\]
Induction completes the proof.
\end{proof}

Recall that the operators $L (n,k)$ are defined in Section \ref{ss op}.

\begin{proposition} \label{p |Lnk|<}
Let $\ga \in \R$, $L:\Z^+ \to \L (\B^{\ga},\X)$,
and $1 \leq p,q \leq \infty$. Assume that (\ref{e nh}) is $(\l^p,\l^q)$-stable. Then:
\begin{description}
\item[(i)] For any $k \geq 0$ and $n \geq k+1$,
\begin{eqnarray*}
\| L (n,k) \|_{\X \to \X} \ \leq \ 2^{k} \ K_{p,q,L}^{k+1},
\end{eqnarray*}
where $K_{p,q,L} $ is the constant introduced in Proposition \ref{p GaL}.
\item[(ii)] For any $j \in \Z^-$,
\begin{eqnarray} \label{e supLP<inf}
\sup\limits_{n \in \Z^+} \| L(n) \Pr_{[j,0]} \|_{\B^\ga \to \X} < \infty \ .
\end{eqnarray}
 \end{description}
\end{proposition}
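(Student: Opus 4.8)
\emph{Plan.} For part (i) the idea is to feed system (\ref{e nh}), started from zero initial data, a finitely supported input $f\in\l^p(\X)$ that steers the prehistory so that $x_n(0,\vz_\B;f)=\E_{-k}\psi$; the very next step of the recursion then reads off $L(n,k)\psi$. I would prove this by induction on $k$. Fix $n\geq k+1$ and $\psi\in\X$ with $|\psi|\leq1$. Since the zero initial data forces $x(m)=0$ for $m\leq0$, I set $f(i)=0$ for $0\leq i\leq n-k-2$ (which gives $x(1)=\dots=x(n-k-1)=0$), then $f(n-k-1)=\psi$ (which yields $x(n-k)=\psi$), and finally $f(n-k+i)=-L(n-k+i,i)\psi$ for $i=0,\dots,k-1$. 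A short inner induction on $i$, using (\ref{e Lnm}), shows that $x_{n-k+i}=\E_{-i}\psi$ and hence $x(n-k+i+1)=L(n-k+i,i)\psi+f(n-k+i)=0$; thus $x(n-k+1)=\dots=x(n)=0$. Consequently $x_n=\E_{-k}\psi$ exactly, and with $f(n)=0$ relation (\ref{e nh}) gives $x(n+1)=L(n)\E_{-k}\psi=L(n,k)\psi$.

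To turn this into an estimate I use $|x(n+1)|\leq\|x(\cdot,0,\vz_\B;f)\|_\infty\leq\|x(\cdot,0,\vz_\B;f)\|_q$ together with $\|f\|_p\leq\|f\|_1$ and Proposition \ref{p GaL}, obtaining
\[
\|L(n,k)\|_{\X\to\X}=\sup_{|\psi|\leq1}|x(n+1)|\leq K_{p,q,L}\|f\|_1\leq K_{p,q,L}\Bigl(1+\sum_{i=0}^{k-1}\|L(n-k+i,i)\|_{\X\to\X}\Bigr).
\]
Writing $a_k:=\sup_{m\geq k+1}\|L(m,k)\|_{\X\to\X}$ and noting that $n-k+i\geq i+1$ (so that the inductive bounds are applicable), this reads $a_k\leq K_{p,q,L}\bigl(1+\sum_{i=0}^{k-1}a_i\bigr)$ with $a_0\leq K_{p,q,L}$. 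Setting $T_k:=1+\sum_{i=0}^{k-1}a_i$, one gets $T_{k+1}\leq(1+K_{p,q,L})\,T_k$ and $T_0=1$, hence $T_k\leq(1+K_{p,q,L})^k$ and
\[
a_k\leq K_{p,q,L}\,T_k\leq K_{p,q,L}(1+K_{p,q,L})^k\leq 2^k K_{p,q,L}^{k+1},
\]
the last step using $K_{p,q,L}\geq1$. This is exactly the bound claimed in (i).

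For part (ii), I expand $L(n)\Pr_{[j,0]}$ coordinatewise via (\ref{e Lnm}): for $\vphi\in\B^\ga$ one has $L(n)\Pr_{[j,0]}\vphi=\sum_{k=0}^{|j|}L(n,k)\vphi^{[-k]}$, and from the definition of $\B^\ga$, $|\vphi^{[-k]}|\leq e^{\ga k}|\vphi|_{\B^\ga}$. Hence $\|L(n)\Pr_{[j,0]}\|_{\B^\ga\to\X}\leq\sum_{k=0}^{|j|}e^{\ga k}\|L(n,k)\|_{\X\to\X}$. For $n\geq|j|+1$ part (i) bounds this by the $n$-independent constant $\sum_{k=0}^{|j|}e^{\ga k}2^k K_{p,q,L}^{k+1}$, while for the finitely many remaining indices $n\leq|j|$ one simply uses $\|L(n)\Pr_{[j,0]}\|_{\B^\ga\to\X}\leq\|L(n)\|_{\B^\ga\to\X}<\infty$ (since $\Pr_{[j,0]}$ is a norm-nonincreasing projection of $\B^\ga$ and $L(n)\in\L(\B^\ga,\X)$). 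Taking the maximum over these two cases gives (\ref{e supLP<inf}).

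The main obstacle is the bookkeeping in the construction of $f$ in part (i): one must verify carefully that the prehistory collapses \emph{exactly} to $\E_{-k}\psi$ at time $n$, and then arrange the induction so that the recursion for $a_k$ closes. This is precisely where the hypothesis $n\geq k+1$ is essential, since it guarantees that all indices $n-k+i$ appearing in the cancellation exceed $i+1$, so that the previously established bounds on $\|L(\cdot,i)\|_{\X\to\X}$ with $i<k$ are available.
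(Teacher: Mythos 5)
Your proposal is correct and follows essentially the same route as the paper: the same finitely supported control $f$ that steers the state to $\E_{-k}\psi$ (the paper writes $n_0=n-k$), the same application of Proposition \ref{p GaL} with $\|f\|_p\le\|f\|_1$, and the same coordinatewise expansion for part (ii); your $T_k$ recursion is just a repackaging of the paper's direct substitution of the inductive bounds and yields the identical constant $2^kK_{p,q,L}^{k+1}$.
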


\begin{proof}
\textbf{(i)}
We use induction in $k$ to prove that
\begin{eqnarray} \label{e L(n+k,k)}
\| L (n_0+k, k) \|_{\X \to \X} \leq 2^{k} K_{p,q,L}^{k+1}.
\end{eqnarray}
for any $n_0 \in \N$ and $k \in \Z^+$.

Let $k=0$. Then (\ref{e L(n+k,k)}) can be obtained in the way shown in Remark \ref{r Alinf} (2).
Assume now that $k_1 \in \N$ and (\ref{e L(n+k,k)}) holds for all $0 \leq k \leq k_1 -1$.

For any $\psi \in \X$ and  $n_0 \in \N$ we can choose $f \in \l^p (\X)$ such that
\begin{eqnarray} \label{e xn=psi}
x_{n_0-1} (0,\vz_\B; f) = \vz_\B , \quad x_{n_0+k} (0,\vz_\B; f) = \E_{-k} \psi \quad
\text{for} \quad 0 \leq k \leq k_1 .
\end{eqnarray}
Indeed, consider $f (\cdot)$ defined by
\begin{eqnarray}
f (j) & = & \vz_\X   \ \qquad \text{ for } \  0 \leq j \leq n_0-2 , \quad \label{e f1} \\
f (n_0-1) & = & \psi ,  \quad
\label{e f12}\\
f(n_0+k) & = & - L (n_0+k,k) \psi
 \qquad \text{ for } \ \quad 0 \leq k \leq k_1-1,
 \label{e f2}\\
f(n_0+k) & = & \vz_\X  \ \qquad \text{ for } \ \quad k \geq k_1 \label{e f3}.
\end{eqnarray}
Then it is easy to see that (\ref{e xn=psi}) holds.
Note that 
\[
x (n_0+k_1+1) = L (n_0+k_1,k_1) \psi  \quad \text{ and } \quad
|x (n_0+k_1+1)| \leq \| x(\cdot , 0, \vz_\B; f) \|_q.
\]
Applying Proposition \ref{p GaL} to $f$ defined by (\ref{e f1})--(\ref{e f3}), we get
\begin{eqnarray*}
| L (n_0+k_1,k_1) \psi | \leq \| x(\cdot , 0, \vz_\B; f) \|_q \leq K_{p,q,L}  \| f \|_p .
\end{eqnarray*}
It follows from (\ref{e ||q<||p}) that
\begin{eqnarray*}
\| f \|_p \leq \| f \|_1 = |\psi| + \sum_{k=0}^{k_1-1} | L (n_0+k,k) \psi |
\leq |\psi| \left( 1+ \sum_{k=0}^{k_1-1} | L (n_0+k,k)  |_{\X \to \X} \right).
\end{eqnarray*}
Combining the last two inequalities with (\ref{e L(n+k,k)}) for $0\leq k \leq k_1 -1$ and
the fact that $K_{p,q,L} \geq 1$, we get
\begin{eqnarray*}
\| L (n_0+k_1,k_1) \|_{\X \to \X} \leq
K_{p,q,L} \left( 1+ \sum_{k=0}^{k_1-1} 2^k K_{p,q,L}^{k+1} \right) \leq
K_{p,q,L}^{k_1+1} \left( 1 + \sum_{k=0}^{k_1-1} 2^k \right) =
2^{k_1} K_{p,q,L}^{k_1+1}.
\end{eqnarray*}
This completes the proof of \textbf{(i)}.

\textbf{(ii)} The assertion
$\sup\limits_{n \in \Z^+} \| L(n) \Pr_{[j,0]} \|_{\B^\ga \to \X} < \infty$ follows from the estimate
\begin{eqnarray} \label{e supLP}
\sup\limits_{n \geq -j+1} \ \| L(n) \Pr_{[j,0]} \|_{\B^\ga \to \X} \ \leq \
 K_{p,q,L} \frac{\left(2e^\ga K_{p,q,L} \right)^{-j+1} -1}{2e^\ga K_{p,q,L} -1} \  .
\end{eqnarray}
Let us show that statement (i) implies (\ref{e supLP}). Note that for $n \geq -j+1$
and
$j \leq m \leq 0$,
\[
| L(n) \Pr_{\{m\}} \vphi | = | L(n,-m) \vphi^{[m]} | \leq
2^{-m} K_{p,q,L}^{-m+1} | \vphi^{[m]} | \leq 2^{-m} K_{p,q,L}^{-m+1} e^{-m\ga} | \vphi |_{\B^{\ga}} \ .
\]
Hence,
\[
| L(n) \Pr_{[j,0]} \vphi | \leq \sum_{m=j}^{0}
| L(n) \Pr_{\{m\}} \vphi | \leq | \vphi |_{\B^{\ga}} \sum_{m=j}^{0} 2^{-m} e^{-m\ga} K_{p,q,L}^{-m+1}
\leq | \vphi |_{\B^{\ga}} K_{p,q,L}  \frac{\left(2e^\ga K_{p,q,L} \right)^{-j+1} -1}{2e^\ga K_{p,q,L} -1}
 \ .
\]
\end{proof}

\subsection{Proof of Theorem \ref{t UES} \label{ss proofUES}}

The proof is based on Theorem \ref{t ordUES} and
the reduction of (\ref{e nh}) to (\ref{e ndnh}).
The facts that \textbf{(i) $\Leftrightarrow$ (ii)} and that
\textbf{(i)} and  \textbf{(ii)} are equivalent to
UE stability of (\ref{e ndh}) are established in  Proposition \ref{p UES}.

Let us prove that \textbf{ (iii) implies UE stability of (\ref{e ndh})}.
Taking into account Theorem \ref{t ordUES}, it is enough to prove that \textbf{(iii)} implies
\begin{equation} \label{e ylq}
\| y(\cdot , 0, \vz_\B; g) \|_q \leq C_1 \| g \|_p \ , \qquad g \in \l^p (\B^\ga),
\end{equation}
with a certain constant $C_1 = C_1 (L)>0$.

By Proposition \ref{p rep},
\begin{eqnarray*}
\| y (\cdot,0,\vz_\B ; g) \|_q \leq
\| x_\bullet (0,\vz_\B; g^0 + L \h) \|_q + \| \h \|_q .
\end{eqnarray*}
Applying Proposition \ref{p lp} to the first term (note that $x_{0} = \vz_\B$) and
inequality (\ref{e ||q<||p}) to the second, we get
\begin{eqnarray*}
\| y (\cdot,0,\vz_\B ; g) \|_q \leq
C_2 (q) \, \| x (\cdot, 0 ,\vz_\B; g^0 + L \h) \|_q + \| \h \|_p ,
\end{eqnarray*}
where $C_2 (q) := (1 - e^{-q\ga})^{-1/q}$ for $q<\infty$ and $C_2 (\infty) :=1$.
From $(\l^p,\l^q)$-stability and Proposition \ref{p GaL} we obtain
\begin{eqnarray*}
\| y (\cdot,0,\vz_\B ; g) \|_q \leq
 C_2 (q) \, K_{p,q,L} \, \|g^0 \|_p + C_2 (q) \, K_{p,q,L} \, \| L \h \|_p + \| \h  \|_p \ .
\end{eqnarray*}
Note that (\ref{e LPlinf}) and Proposition \ref{p |Lnk|<} (ii) imply $\| L ( \cdot ) \|_\infty <
\infty$.
This and Proposition \ref{p hlp} yield
\begin{eqnarray*}
\| y (\cdot,0,\vz_\B; g) \|_q & \leq & C_2 (q) \, K_{p,q,L} \,  \| g^0 \|_p +
C_2 (q) \, K_{p,q,L} \, \| L \|_\infty \| \h \|_p + \| \h \|_p
\leq \\ & \leq &
C_2 (q) \, K_{p,q,L} \, \| g \|_p +
\bigl[ C_2 (q) \, K_{p,q,L} \, \| L \|_\infty +1 \bigr] (1-e^{-\ga})^{-1} \|  g \|_p \ .
\end{eqnarray*}
This completes the proof of (\ref{e ylq}).

Let us show that \textbf{UE stability of (\ref{e ndh}) implies (iii).}
It follows from Theorem \ref{t ordUES} and formula (\ref{e xn=yn}) that UE stability of (\ref{e ndh})
implies
$x_\bullet (0, \vz_\B ; f) \in \l^q (\X)$ for any $f \in \l^p (\X)$. So system (\ref{e nh}) is
$(\l^p,\l^q)$-stable.

Finally, note that UE stability of (\ref{e ndh}) implies $\| L (\cdot)\|_\infty < \infty$ and so
implies (\ref{e LPlinf}) for  every $m \in \Z^-$. Indeed,
we see from Remark \ref{r Alinf} (1) that \textbf{(ii)} yields
$\| D (\cdot) \|_\infty < \infty$ (the operators $D(n)$ are defined by (\ref{e D})). Now (\ref{e |D|}) 
implies $\| L (\cdot) \|_\infty < \infty$.
This completes the proof.

\section{Uniform stability and $(\l^1,\l^\infty)$-stability}

In this section we prove that in the case when $p=1$ and $q = \infty$, the theorem analogous to
Theorem \ref{t UES} is valid with uniform stability instead of UE-stability. We also consider
the more difficult case when $\ga=0$. As before, our method is based on the reduction
of system (\ref{e nh}) to the  first order system (\ref{e ordnh}).

\subsection{Preliminaries} \label{ss preUS}

Let a function $L: \Z^+ \to \L (\B, \X)$ define system (\ref{e nh}) on a seminormed phase space $\B$.
Then homogeneous system (\ref{e h}) is called \emph{uniformly stable (US, in short) in $\X$ w.r.t.
the phase space $\B$} if (\ref{e ExStX}) holds with $\nu = 0$.
(If the phase space $\B$ is fixed we will say
in brief that a system is US in $\X$.)

\emph{Uniform stability in $\B$} of (\ref{e h}) and \emph{uniform stability} of the  first order
system (\ref{e ordh})
are defined in the similar way placing $\nu = 0$ in  Definitions \ref{d UESn} (2) and \ref{d ordUES} (1),
respectively.

We will use the following result concerning first order systems.

\begin{theorem}[cf. \cite{AVM96}] \label{t ordUS}
  Let a function $\A : \Z^+ \to \L (\W)$ define the first order system (\ref{e ordnh}).
  Then the corresponding homogeneous system (\ref{e ordh}) is US if and only if system (\ref{e ordnh})
  is $(\l^1,\l^\infty)$-stable.
\end{theorem}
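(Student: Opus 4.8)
The plan is to funnel both properties through a single quantity: the uniform bound on the Cauchy (evolution) operator of the homogeneous system. Set $\Phi(n,k):=\A(n-1)\A(n-2)\cdots\A(k)$ for $n>k\geq 0$ and $\Phi(n,n):=\I$, so that $z(n,\tau,\psi;\fz)=\Phi(n,\tau)\psi$ and the discrete variation-of-constants formula reads
\[
z(n,0,\vz_\W;\s)=\sum_{k=0}^{n-1}\Phi(n,k+1)\,\s(k),\qquad n\in\Z^+.
\]
With this notation, uniform stability of (\ref{e ordh}) is, directly from the definition (Definition \ref{d ordUES}(1) with $\nu=0$), equivalent to $\sup_{n\geq\tau\geq 0}\|\Phi(n,\tau)\|_{\W\to\W}<\infty$, so the whole statement reduces to matching this supremum against $(\l^1,\l^\infty)$-stability.

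First I would dispatch the implication ``US $\Rightarrow(\l^1,\l^\infty)$-stability,'' which is immediate: if $\|\Phi(n,\tau)\|\leq K_1$ for all $n\geq\tau\geq 0$, then the formula above gives, for any $\s\in\l^1(\W)$, the estimate $|z(n,0,\vz_\W;\s)|\leq\sum_{k=0}^{n-1}\|\Phi(n,k+1)\|\,|\s(k)|\leq K_1\|\s\|_1$ uniformly in $n$, so $z(\cdot,0,\vz_\W;\s)\in\l^\infty(\W)$.

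The substantial direction is ``$(\l^1,\l^\infty)$-stability $\Rightarrow$ US,'' and the main obstacle is to convert boundedness of the input--output map into a \emph{pointwise} bound on $\Phi$. I would first invoke the first-order analogue of Proposition \ref{p GaL}, namely (\ref{e |z|q<|s|p}) with $(p,q)=(1,\infty)$, to obtain a constant $K_{1,\infty,\A}$ with $\|z(\cdot,0,\vz_\W;\s)\|_\infty\leq K_{1,\infty,\A}\|\s\|_1$. Then, exactly as in Remark \ref{r Alinf}(2), I would test against the spike inputs $\s(j):=\delta_{j,k}\psi$ (with $k\in\Z^+$, $\psi\in\W$), for which $\|\s\|_1=|\psi|_\W$ and, by the variation-of-constants formula, $z(n,0,\vz_\W;\s)=\Phi(n,k+1)\psi$ whenever $n\geq k+1$. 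This yields $\|\Phi(n,k+1)\|_{\W\to\W}\leq K_{1,\infty,\A}$ for all $n\geq k+1\geq 1$, i.e.\ a uniform bound on $\Phi(n,m)$ for every $m\geq 1$.

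It remains to cover the boundary index $\tau=0$, which the spike argument alone does not reach. For $n\geq 1$ I would factor $\Phi(n,0)=\Phi(n,1)\,\A(0)$ and combine $\|\Phi(n,1)\|\leq K_{1,\infty,\A}$ with $\|\A(0)\|_{\W\to\W}<\infty$ (automatic, since $\A(0)\in\L(\W)$; one may also quote $\sup_n\|\A(n)\|<\infty$ from Remark \ref{r Alinf}(2)); together with $\Phi(n,n)=\I$ this produces a single constant $K_1:=\max\{1,\,K_{1,\infty,\A}(1+\|\A(0)\|)\}\geq 1$ bounding $\|\Phi(n,\tau)\|$ for all $n\geq\tau\geq 0$. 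By the reformulation of the first paragraph, this is precisely uniform stability. I expect the spike-testing step to be the conceptual heart of the argument, with the $\tau=0$ patch and the verification of the variation-of-constants formula being routine.
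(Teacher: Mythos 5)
Your proof is correct, but it is worth pointing out that the paper does not actually prove Theorem \ref{t ordUS}: it imports the result from \cite[Theorem 6]{AVM96} and only supplies, in Remark \ref{r USAinf<inf}, the observation that the hypothesis $\sup_{n}\|\A(n)\|_{\W\to\W}<\infty$ imposed there can be dropped because it is automatic. So your contribution is a genuinely self-contained replacement for the citation. Your route is the standard and most elementary one: identify uniform stability with a uniform bound on the Cauchy operator $\Phi(n,\tau)$, get the easy direction from the variation-of-constants sum, and for the converse combine the closed-graph bound (\ref{e |z|q<|s|p}) with spike inputs $\s(j)=\delta_{j,k}\psi$ to bound $\Phi(n,k+1)$ for all $k\geq 0$, patching the single boundary case $\tau=0$ by factoring out $\A(0)$. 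The spike-input device is exactly the one the paper itself deploys in Remark \ref{r Alinf}(2) and in the proof of Proposition \ref{p |Lnk|<}(i), so your argument is fully consistent with the paper's toolkit; what it buys over the paper's treatment is independence from \cite{AVM96} (whose proof goes through the machinery of spectral dichotomies), at the cost of a page of routine verification. All steps check out: the variation-of-constants formula, the equivalence of US with $\sup_{n\geq\tau\geq 0}\|\Phi(n,\tau)\|<\infty$, the estimate $\|\Phi(n,k+1)\|\leq K_{1,\infty,\A}$ for $n\geq k+1\geq 1$, and the $\tau=0$ patch using $\A(0)\in\L(\W)$ are all valid.
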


\begin{remark} \label{r USAinf<inf}
This result was obtained in \cite[Theorem 6]{AVM96}, formally under the additional assumption
$\sup_{n \in \Z^+} \| \A (n) \|_{\W \to \W} < \infty$. This additional assumption  can be easily dropped
in the way shown in Remark \ref{r Alinf}. In fact, uniform stability of (\ref{e ordh}), as well as
$(\l^1,\l^\infty)$-stability of (\ref{e ordnh}), implies
$\sup_{n \in \Z^+} \| \A (n) \|_{\W \to \W} < \infty$.
\end{remark}

\begin{proposition} \label{p US}
Let $\ga \geq 0$ and let $L: \Z^+ \to \L (\B^{\ga}, \X)$. Then the following statements are equivalent:
\begin{description}
\item[(i)] System (\ref{e h}) is US in $\X$ w.r.t.  $\B^{\ga}$.
\item[(ii)] System (\ref{e h}) is US in $\B^{\ga}$.
\item[(iii)] System (\ref{e ndh}) is US.
\end{description}
\end{proposition}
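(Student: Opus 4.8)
The plan is to follow the scheme of the proof of Proposition \ref{p UES}, noting that the passage to uniform stability is in fact easier than the exponential case and, crucially, survives at the borderline $\ga = 0$. First I would record that statements \textbf{(ii)} and \textbf{(iii)} are merely reformulations of one another: by (\ref{e xn=yn}) with $f = \fz$ the homogeneous solution $y(n,\tau,\vphi;\fz)$ of (\ref{e ndh}) coincides with the prehistory $x_n(\tau,\vphi;\fz)$, and the $\B^\ga$-norm of the former (as a state of the first order system in $\W = \B^\ga$) is exactly the $\B^\ga$-seminorm of the latter. Hence US of (\ref{e ndh}) in the sense of Definition \ref{d ordUES} (1) with $\nu_1 = 0$ is, by definition, US of (\ref{e h}) in $\B^\ga$. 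The implication \textbf{(ii)} $\Rightarrow$ \textbf{(i)} is then immediate, since $|x(n,\tau,\vphi;\fz)| = |x_n^{[0]}(\tau,\vphi;\fz)| \leq |x_n(\tau,\vphi;\fz)|_{\B^\ga}$, the $m=0$ term being dominated by the supremum defining the seminorm.

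The only substantive step is \textbf{(i)} $\Rightarrow$ \textbf{(ii)}, and here I would reuse the coordinate identities already extracted in the proof of Proposition \ref{p UES}. Writing $y(n) := y(n,\tau,\vphi;\fz) = x_n(\tau,\vphi;\fz)$, one has $\Pr_{\{0\}} y(n) = \E_0 x(n)$, so that $|\Pr_{\{0\}} y(n)|_{\B^\ga} = |x(n)|$, while $(\I - \Pr_{\{0\}}) y(n) = \Sh\, y(n-1)$ gives, via (\ref{e |Sh|}), $|(\I - \Pr_{\{0\}}) y(n)|_{\B^\ga} = e^{-\ga} |y(n-1)|_{\B^\ga}$. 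Combining these with the splitting (\ref{e |phi|P0}) yields the recursion
\[
|y(n)|_{\B^\ga} = \max\{|x(n)|,\, e^{-\ga}|y(n-1)|_{\B^\ga}\}, \qquad n > \tau .
\]
Assuming \textbf{(i)}, that is, $|x(n)| \leq K |\vphi|_{\B^\ga}$ for all $n \geq \tau$ with $K \geq 1$, a one-line induction on $n \geq \tau$ then delivers $|y(n)|_{\B^\ga} \leq K |\vphi|_{\B^\ga}$: the base case $n = \tau$ is $|y(\tau)|_{\B^\ga} = |\vphi|_{\B^\ga}$, and in the inductive step both entries of the maximum are bounded by $K |\vphi|_{\B^\ga}$, using $e^{-\ga} \leq 1$. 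Recalling \textbf{(ii)} $\Leftrightarrow$ \textbf{(iii)}, this is precisely US of (\ref{e h}) in $\B^\ga$, with the same stability constant $K$.

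I do not expect a genuine obstacle; the one point worth emphasising is where this diverges from Proposition \ref{p UES}. In the exponential case the induction had to carry a strictly positive decay rate $\nu_1 = \min\{\nu,\ga\}$, which forces $\ga > 0$; here, with $\nu = \nu_1 = 0$, no rate is propagated and the factor $e^{-\ga}$ need only satisfy $e^{-\ga} \leq 1$, so the value $\ga = 0$ is admissible. This is exactly why the proposition can be stated for all $\ga \geq 0$ and is the flexibility that will be needed when uniform stability in the non-fading space $\B^0$ is taken up later. The whole content is the recursion above, which packages both the ``future'' values (controlled by \textbf{(i)}) and the non-increasing contribution of the initial prehistory into a single monotone estimate, so no separate case analysis for initial data versus propagated values is required.
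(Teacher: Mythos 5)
Your proposal is correct and is essentially the paper's own argument: the paper proves Proposition \ref{p US} by running the proof of Proposition \ref{p UES} with $\nu=\nu_1=0$, which is exactly your recursion $|y(n)|_{\B^\ga}=\max\{|x(n)|,\,e^{-\ga}|y(n-1)|_{\B^\ga}\}$ combined with induction and $e^{-\ga}\le 1$. Your observation that only $e^{-\ga}\le 1$ (rather than a strictly positive decay rate) is needed, which is what admits $\ga=0$, is precisely the point of the paper's remark.
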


The proof is the same as that of Proposition \ref{p UES} if we set $\nu=\nu_1=0$.

\begin{proposition} \label{p US to Linf}
Let $\ga \geq 0$ and let $L: \Z^+ \to \L (\B^{\ga}, \X)$. Then any of assertions (i)-(iii) of
Proposition \ref{p US} implies $\sup_{n \in \Z^+} \| L (n) \|_{\B^\ga \to \X} < \infty$.
\end{proposition}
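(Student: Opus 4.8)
The plan is to reduce the statement to Remark \ref{r Alinf}(1) applied to the first order system (\ref{e ndh}). By Proposition \ref{p US}, any of the assertions (i)--(iii) is equivalent to the uniform stability of the homogeneous first order system (\ref{e ndh}), that is, to the uniform stability (in the sense of Definition \ref{d ordUES}(1) with $\nu_1 = 0$) of the system $y(n+1) = D(n) y(n)$ with $\W = \B^\ga$. So it suffices to extract a bound on $\| L(\cdot) \|_{\B^\ga \to \X}$ from a bound on $\| D(\cdot) \|_{\B^\ga \to \B^\ga}$.

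First I would invoke Proposition \ref{p US} to pass to assertion (iii): system (\ref{e ndh}) is US. Next I would apply the reasoning of Remark \ref{r Alinf}(1) verbatim, but now with $\A(n) = D(n)$, $\W = \B^\ga$, and $\nu_1 = 0$. Uniform stability gives a constant $K_1 \geq 1$ such that $|y(n,\tau,\vphi;\fz)|_{\B^\ga} \leq K_1 |\vphi|_{\B^\ga}$ for all $n \geq \tau \geq 0$. Taking $n = \tau + 1$ yields
\[
| D(\tau) \vphi |_{\B^\ga} = | y(\tau+1,\tau,\vphi;\fz) |_{\B^\ga} \leq K_1 | \vphi |_{\B^\ga},
\]
so $\| D(n) \|_{\B^\ga \to \B^\ga} \leq K_1$ for all $n \in \Z^+$, i.e. $\| D(\cdot) \|_\infty < \infty$.

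Finally I would read off the conclusion from identity (\ref{e |D|}), which states that $\| D(n) \|_{\B^\ga \to \B^\ga} = \max\{ \| L(n) \|_{\B^\ga \to \X}, e^{-\ga} \}$. Since the left-hand side is bounded by $K_1$ uniformly in $n$, the term $\| L(n) \|_{\B^\ga \to \X}$ is bounded by $K_1$ as well, which gives $\sup_{n \in \Z^+} \| L(n) \|_{\B^\ga \to \X} \leq K_1 < \infty$, as required. I do not expect any genuine obstacle here: the only point demanding a moment's care is confirming that Proposition \ref{p US} and formula (\ref{e |D|}) are valid for the full range $\ga \geq 0$ (not merely $\ga > 0$), but Proposition \ref{p US} is stated for $\ga \geq 0$ and the derivation of (\ref{e |D|}) from (\ref{e |Sh|}) only needs $e^{-j\ga} \leq 1$ for $j \geq 0$, which holds. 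The argument is thus essentially a one-line corollary of the order-reduction machinery already assembled.
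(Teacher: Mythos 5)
Your proof is correct and follows exactly the paper's route: the paper's own argument is the one-line citation of Proposition \ref{p US} (iii), Remark \ref{r USAinf<inf} (which is precisely the uniform-stability version of Remark \ref{r Alinf}(1) that you rederive), and formula (\ref{e |D|}). Your extra check that (\ref{e |D|}) is valid for all $\ga \geq 0$ is a sensible precaution but, as you note, poses no obstacle.
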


The proposition follows from assertion (iii) of Proposition \ref{p US}, Remark \ref{r USAinf<inf},
and formula (\ref{e |D|}).

\subsection{Uniform stability in the phase space $\B^\ga$ with $\ga >0$. \label{ss USres ga>0}}

\begin{theorem} \label{t US}
Let $\ga > 0$ and let a function $L: \Z^+ \to \L (\B^{\ga}, \X)$ define system (\ref{e nh}).
Then the following statements are equivalent
\begin{description}
\item[(i)] System (\ref{e h}) is US in $\B^{\ga}$ (or, equivalently, in $\X$ w.r.t. $\B^{\ga}$).
\item[(ii)] System (\ref{e nh}) is $(\l^1,\l^\infty)$-stable and condition
    (\ref{e LPlinf}) is fulfilled.
\end{description}
\end{theorem}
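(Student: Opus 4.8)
The plan is to mirror the proof of Theorem \ref{t UES}, but with uniform exponential stability replaced throughout by uniform stability, $(\l^p,\l^q)$-stability replaced by $(\l^1,\l^\infty)$-stability, and Theorem \ref{t ordUES} replaced by its uniform-stability analogue Theorem \ref{t ordUS}. The key structural fact, already recorded in Proposition \ref{p US}, is that statement (i) is equivalent to uniform stability of the first order system (\ref{e ndh}). So the entire theorem reduces to showing that US of (\ref{e ndh}) is equivalent to the conjunction of $(\l^1,\l^\infty)$-stability of (\ref{e nh}) and condition (\ref{e LPlinf}). By Theorem \ref{t ordUS}, US of (\ref{e ndh}) is in turn equivalent to $(\l^1,\l^\infty)$-stability of the reduced first order nonhomogeneous system (\ref{e ndnh}), so the real content is relating the $(\l^1,\l^\infty)$-stability of the reduced system to that of the original system (\ref{e nh}) together with (\ref{e LPlinf}).

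First I would prove the direction \textbf{(ii) $\Rightarrow$ (i)}. Assuming $(\l^1,\l^\infty)$-stability of (\ref{e nh}) and (\ref{e LPlinf}), I would establish the bound
\begin{equation*}
\| y(\cdot , 0, \vz_\B; g) \|_\infty \leq C_1 \| g \|_1 , \qquad g \in \l^1 (\B^\ga),
\end{equation*}
which by Theorem \ref{t ordUS} gives US of (\ref{e ndh}) and hence (i). The estimate is obtained exactly as in the proof of Theorem \ref{t UES}: apply the representation formula of Proposition \ref{p rep} to split $y(\cdot,0,\vz_\B;g)$ into $x_\bullet(0,\vz_\B; g^{[0]}+L\h)$ and $\h$; bound the first term via Proposition \ref{p lp} (here using only the $p=\infty$ part, i.e.\ (\ref{e ||<||linf}), which is valid for all $\ga\geq 0$) together with Proposition \ref{p GaL} applied with $(p,q)=(1,\infty)$; and bound $\h$ via Proposition \ref{p hlp} with $p=1$. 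Crucially, (\ref{e LPlinf}) combined with Proposition \ref{p |Lnk|<}(ii) yields $\| L(\cdot)\|_\infty < \infty$, which is what makes the term $\| L\h\|_1 \leq \|L\|_\infty \|\h\|_1$ controllable. The chain of inequalities is formally the same as in Section \ref{ss proofUES} with $p=1$, $q=\infty$, so no genuinely new calculation is needed.

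For the converse \textbf{(i) $\Rightarrow$ (ii)}, I would argue that US of (\ref{e ndh}) gives both halves of (ii). By Theorem \ref{t ordUS} and the identity (\ref{e xn=yn}), US of (\ref{e ndh}) forces $x_\bullet(0,\vz_\B;f)\in \l^\infty(\X)$ for every $f\in\l^1(\X)$, i.e.\ $(\l^1,\l^\infty)$-stability of (\ref{e nh}). For condition (\ref{e LPlinf}), I would invoke Proposition \ref{p US to Linf}: US of (\ref{e ndh}) implies $\sup_n \| L(n)\|_{\B^\ga\to\X}<\infty$, and since $\| L(n)\Pr_{[-\infty,m]}\|_{\B^\ga\to\X}\leq \|L(n)\|_{\B^\ga\to\X}$, condition (\ref{e LPlinf}) holds for every $m\in\Z^-$. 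This completes the equivalence.

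I do not expect a serious obstacle here, since the structure is entirely parallel to Theorem \ref{t UES} and all the machinery (Propositions \ref{p rep}, \ref{p hlp}, \ref{p lp}, \ref{p |Lnk|<}, \ref{p US}, \ref{p US to Linf} and Theorem \ref{t ordUS}) is already in place. The one point demanding care is that the endpoint pair $(1,\infty)$ is precisely the case excluded from Theorem \ref{t UES}, so I must be sure to use Theorem \ref{t ordUS} rather than Theorem \ref{t ordUES} for the first order reduction, and to verify that every step of the estimate in the \textbf{(ii) $\Rightarrow$ (i)} direction still goes through at this endpoint. The relevant checks are that Proposition \ref{p hlp} holds for $p=1$, that the $\l^\infty$-bound (\ref{e ||<||linf}) of Proposition \ref{p lp} is available (it is, for all $\ga\geq 0$), and that Proposition \ref{p GaL} applies with $(p,q)=(1,\infty)$; all three hold, so the main obstacle of Theorem \ref{t UES}'s proof — the loss of the $\l^p$ estimate of Proposition \ref{p lp} when $q=\infty$ — is sidestepped because at $q=\infty$ one only needs (\ref{e ||<||linf}).
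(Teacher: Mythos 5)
Your proposal is correct and follows essentially the same route as the paper, which proves Theorem \ref{t US} precisely by repeating the proof of Theorem \ref{t UES} with Proposition \ref{p US} in place of Proposition \ref{p UES}, the $\l^\infty$-bound (\ref{e ||<||linf}) (valid for all $\ga \geq 0$, cf.\ Remark \ref{r lp}) in place of the full Proposition \ref{p lp}, Theorem \ref{t ordUS} in place of Theorem \ref{t ordUES}, and Remark \ref{r USAinf<inf} (equivalently, Proposition \ref{p US to Linf}) in place of Remark \ref{r Alinf}(1). All the substitutions and endpoint checks you list are exactly the ones the paper relies on.
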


The proof of Theorem \ref{t US} is similar to the proof of Theorem \ref{t UES} with the use of
Proposition \ref{p US} instead of Proposition \ref{p UES}, Remark \ref{r lp} instead of 
Proposition \ref{p lp}, Theorem \ref{t ordUS} instead of
Theorem \ref{t ordUES},
and Remark \ref{r USAinf<inf} instead of Remark \ref{r Alinf} (1).

\begin{remark} \label{r US cond repl}
 Example \ref{ex Pmn} and Proposition \ref{p US to Linf} show that condition (\ref{e LPlinf})
 in Theorem \ref{t US} \emph{cannot be replaced} by (\ref{e LPmn})
with $\{ m_n \}_1^\infty $ such that
$\lim m_n = -\infty$.
\end{remark}

Recall that systems with bounded delay are defined in Section \ref{ss UESres}.

\begin{corollary} \label{c USbd}
 Let $\ga \in \R$.
If (\ref{e nh}) is a  system with bounded delay of order $d$, then the following statements are equivalent:
\begin{description}
\item[(i)] System (\ref{e h}) is US in $\X$ with respect to $\B^\ga$
(or, equivalently, w.r.t. $\B_{[-d+1,0]}^\ga$).
\item[(ii)] System (\ref{e nh}) is $(\l^1,\l^\infty)$-stable.
\end{description}
\end{corollary}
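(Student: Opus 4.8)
The plan is to reduce Corollary \ref{c USbd} to the already-proved Theorem \ref{t US} together with the bounded-delay observations assembled just before Corollary \ref{c UESbd}. The key point is that for a system with bounded delay of order $d$, the boundedness condition (\ref{e LPlinf}) is automatic: by Definition \ref{d bound del} there is an $m \in \Z^-$ (namely $m = -d$) such that $L(n) \Pr_{[-\infty,m]} = 0$ for all $n$, so $\| L(\cdot) \Pr_{[-\infty,m]} \|_\infty = 0 < \infty$. Hence the extra hypothesis (\ref{e LPlinf}) in Theorem \ref{t US} costs nothing in the bounded-delay setting, and the content of Corollary \ref{c USbd} is the phase-space independence of uniform stability in $\X$.

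First I would dispose of the case $\ga > 0$ directly. For $\ga > 0$ the space $\B^\ga$ is a seminormed (in fact normed) phase space on which $L$ is defined, and (\ref{e LPlinf}) holds trivially as noted above; therefore Theorem \ref{t US} applies verbatim and gives the equivalence of ``(\ref{e h}) is US in $\X$ w.r.t.\ $\B^\ga$'' with ``(\ref{e nh}) is $(\l^1,\l^\infty)$-stable,'' which is exactly (i)$\Leftrightarrow$(ii). Next I would handle arbitrary $\ga \in \R$ (including $\ga \le 0$) by transferring the statement to a fixed $\ga > 0$ using the phase-space independence already exploited in the proof of Corollary \ref{c UESbd}. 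The mechanism is the identity (\ref{e x=x(P)}), namely $x(\cdot,0,\vphi;f) = x(\cdot,0,\Pr_{[-d+1,0]}\vphi;f)$, so that only the coordinates $m \in [-d+1,0]$ of the initial prehistory influence the solution $x(\cdot)$ in $\X$.

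The crux is then the following norm-equivalence remark: on the range of $\Pr_{[-d+1,0]}$, i.e.\ on the finite-dimensional-in-index block of tuples $\vphi$ with $\vphi^{[m]} = \vz_\X$ for $m \le -d$, all the seminorms $|\cdot|_{\B^\delta}$, $\delta \in \R$, are mutually equivalent, since for such $\vphi$ we have $|\vphi|_{\B^\delta} = \max_{-d+1 \le m \le 0} |\vphi^{[m]}| e^{\delta m}$, a finite maximum whose comparison constants $e^{(\delta - \delta')m}$ range over the compact index set $\{-d+1,\dots,0\}$. Uniform stability in $\X$ w.r.t.\ a phase space $\B$ is the estimate $|x(n,\tau,\vphi;\fz)| \le K |\vphi|_{\B}$ uniformly in $n \ge \tau \ge 0$; combining the equivalence of $\B^\ga$- and $\B^{\ga'}$-seminorms on $\col(\Pr_{[-d+1,0]}\vphi)$ with (\ref{e x=x(P)}) shows US in $\X$ w.r.t.\ $\B^\ga$ is independent of the choice of $\ga \in \R$ (and likewise equals US w.r.t.\ the seminormed space $\B_{[-d+1,0]}^\ga$). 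The $(\l^1,\l^\infty)$-stability of (\ref{e nh}), being a statement about $x(\cdot,0,\vz_\B;f)$ with zero initial data, does not involve the phase-space norm at all and is therefore manifestly $\ga$-independent.

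Assembling these pieces: for any $\ga \in \R$, US of (\ref{e h}) in $\X$ w.r.t.\ $\B^\ga$ is equivalent to US in $\X$ w.r.t.\ $\B^{\ga_0}$ for a fixed $\ga_0 > 0$, which by the $\ga > 0$ case is equivalent to $(\l^1,\l^\infty)$-stability of (\ref{e nh}); and that last property is $\ga$-free. I expect the only genuinely delicate step to be stating the seminorm-equivalence cleanly for seminormed (rather than normed) phase spaces when $\ga \le 0$, but since one restricts attention to the finite block $\Pr_{[-d+1,0]}$ this is routine and is precisely the argument already invoked at the end of the proof of Corollary \ref{c UESbd}. I would simply write ``The proof repeats the proof of Corollary \ref{c UESbd}, using Theorem \ref{t US} in place of Theorem \ref{t UES},'' and spell out the two sentences above.
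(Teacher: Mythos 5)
Your proposal is correct and matches the paper's argument: the paper proves Corollary \ref{c USbd} by the same reduction as Corollary \ref{c UESbd}, i.e.\ applying Theorem \ref{t US} for $\ga>0$ (where (\ref{e LPlinf}) holds trivially for bounded delay) and then transferring to arbitrary $\ga\in\R$ via (\ref{e x=x(P)}) and the equivalence of the $\B^\ga$-norms on the range of $\Pr_{[-d+1,0]}$. Nothing is missing.
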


The proof is similar to that of Corollary \ref{c UESbd}.


\subsection{The case $\ga = 0$ \label{ss USres ga=0}}

Example \ref{ex US ga=0 repl} shows that in general Theorem \ref{t US} is not valid for
$\ga=0$.
In this subsection we give several results on uniform stability of (\ref{e h}) in $\B^0$.

Let $L: \Z^+ \to \L (\B^{0}, \X)$.
Recall that the operator $H: \Ssp_+ (\B^0) \to \Ssp_+ (\B^0)$ is defined in (\ref{e reph})
Let us define the operator $M_L: \Ssp_+ (\B^0) \to \Ssp_+ (\X) $ by
\[
(M_L g) (n) := L(n) g(n), \quad n \in \Z^+
, \quad g \in \Ssp_+ (\B^0)
.
\]
As in the proof of Proposition \ref{p GaL}, $\Gamma$ is the linear operator from
$\Ssp_+ (\X)$ to $\Ssp_+ (\X)$ defined by
\[
\Gamma : f \mapsto x(\cdot , 0, \vz_\B; f).
\]
In the following theorem we use the product $\Gamma M_L H$ of
the three above defined operators and the image $\left( \Gamma M_L H \right) \, \l^1 (\B^0)$ of
the space $\l^1 (\Z^+, \B^0)$ under the operator $\Gamma M_L H $. The image is understood in 
the usual sense
\[
\left( \Gamma M_L H \right) \, \l^1 (\B^0) :=
\{ \ x (\cdot) \in \Ssp_+ (\X) \ : \ x = \Gamma M_L H g \ \ \text{for some} \ g \in \l^1 (\B^0) \} .
\]

\begin{theorem} \label{t UEga=0}
Let a function $L: \Z^+ \to \L (\B^{0}, \X)$ define system (\ref{e nh}).
Then the following statements are equivalent
\begin{description}
\item[(i)] System (\ref{e h}) is US in $\B^{0}$ (or, equivalently, in $\X$ w.r.t. $\B^{0}$).
\item[(ii)] System  (\ref{e nh}) is $(\l^1,\l^\infty)$-stable and
$\left( \Gamma M_L H \right) \, \l^1 (\B^0) \subseteq \l^{\infty} (\X)$.
\end{description}
\end{theorem}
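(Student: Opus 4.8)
The plan is to adapt the proof architecture of Theorem \ref{t UES} to the borderline case $(p,q)=(1,\infty)$ with $\ga=0$, replacing uniform exponential stability by uniform stability throughout. By Proposition \ref{p US}, statement (i) is equivalent to uniform stability of the reduced first order system (\ref{e ndh}), and by Theorem \ref{t ordUS} this in turn is equivalent to $(\l^1,\l^\infty)$-stability of the nonhomogeneous reduced system (\ref{e ndnh}). Thus the entire problem reduces to showing that the reduced system (\ref{e ndnh}) is $(\l^1,\l^\infty)$-stable if and only if (\ref{e nh}) is $(\l^1,\l^\infty)$-stable together with the image condition $\left( \Gamma M_L H \right) \l^1 (\B^0) \subseteq \l^\infty (\X)$. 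The representation of Proposition \ref{p rep} is the bridge: for $g \in \l^1(\B^0)$ we have $y(\cdot,0,\vz_\B;g) = x_\bullet(0,\vz_\B; g^{[0]} + L\h) + \h$ with $\h = Hg$.

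First I would establish the forward direction, that (i) implies (ii). Uniform stability of (\ref{e ndh}) gives, via Remark \ref{r USAinf<inf} and (\ref{e |D|}), that $\sup_n \|L(n)\|_{\B^0\to\X} < \infty$, hence $M_L$ maps bounded sequences to bounded sequences. From Theorem \ref{t ordUS} the reduced system is $(\l^1,\l^\infty)$-stable, and projecting onto the zeroth coordinate (using $x(n) = \Pr_{\{0\}}y(n)$ as in the proof of Proposition \ref{p UES}) yields $(\l^1,\l^\infty)$-stability of (\ref{e nh}). For the image condition, I would take $g \in \l^1(\B^0)$ and write $y(\cdot,0,\vz_\B;g)$ in bounded $\B^0$-norm; since $\h = Hg$ is bounded in $\B^0$ (here $\ga=0$ forces the use of Remark \ref{r lp}: only the $p=\infty$ version of Proposition \ref{p lp} survives, namely (\ref{e ||<||linf})), the term $\Gamma M_L H g = x(\cdot,0,\vz_\B; L\h)$ must also be bounded, giving $\left( \Gamma M_L H\right)\l^1(\B^0) \subseteq \l^\infty(\X)$.

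For the converse, that (ii) implies (i), I would show $(\l^1,\l^\infty)$-stability of (\ref{e ndh}) directly by bounding $\|y(\cdot,0,\vz_\B;g)\|_\infty$ for $g\in\l^1(\B^0)$. Using Proposition \ref{p rep} and the triangle inequality in the $\B^0$-seminorm,
\begin{equation*}
\| y(\cdot,0,\vz_\B;g) \|_\infty \leq \| x_\bullet(0,\vz_\B; g^{[0]} + L\h) \|_\infty + \| \h \|_\infty .
\end{equation*}
The term $\|\h\|_\infty$ is controlled through (\ref{e ||<||linf}) and the fact that $H$ maps $\l^1(\B^0)$ into $\l^\infty(\B^0)$, which one checks from formula (\ref{e hm=sumg}) since with $\ga=0$ each coordinate of $\h(n)$ is a sum of at most $\|g\|_1$-worth of terms. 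The first term splits by linearity of $\Gamma$ into $x(\cdot,0,\vz_\B;g^{[0]})$ and $x(\cdot,0,\vz_\B;L\h) = \Gamma M_L H g$: the former is bounded because $g^{[0]}\in\l^1(\X)$ and (\ref{e nh}) is $(\l^1,\l^\infty)$-stable (with the bound quantified via Proposition \ref{p GaL}), while the latter is bounded precisely by the hypothesis $\left(\Gamma M_L H\right)\l^1(\B^0)\subseteq\l^\infty(\X)$. I would invoke the closed graph theorem once more to turn this mere inclusion into a norm bound on $\Gamma M_L H$ as an operator $\l^1(\B^0)\to\l^\infty(\X)$, so that all three pieces combine into $\|y(\cdot,0,\vz_\B;g)\|_\infty \leq C\|g\|_1$.

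The main obstacle is exactly where this theorem departs from Theorem \ref{t UES}: when $\ga=0$ the damping estimate of Proposition \ref{p hlp} collapses, since $\|\e\|_1 = (1-e^{-\ga})^{-1}$ blows up as $\ga\to 0$, and one can no longer conclude $\|L\h\|_p < \infty$ from $\|L\|_\infty<\infty$ alone. This is precisely why the composite operator $\Gamma M_L H$ cannot be disposed of and must instead be promoted to an explicit hypothesis — it is the residue of the term that was automatically controlled in the fading case. The delicate point in writing the argument is to apply (\ref{e ||<||linf}) rather than (\ref{e ||<||lp}) uniformly, and to verify that the closed graph argument applies to $\Gamma M_L H$ as a genuinely closed operator between $\l^1(\B^0)$ and $\l^\infty(\X)$, which follows from the closedness of $\Gamma$ established in Proposition \ref{p GaL} together with the boundedness of $M_L$ and $H$.
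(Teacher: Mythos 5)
Your proposal is correct and follows essentially the same route as the paper: reduce via Proposition \ref{p US} and Theorem \ref{t ordUS} to $(\l^1,\l^\infty)$-stability of the first order system (\ref{e ndnh}), then use the representation of Proposition \ref{p rep} together with $\|Hg\|_\infty\leq\|g\|_1$ and (\ref{e ||<||linf}) to trade $y(\cdot,0,\vz_\B;g)$ for $\Gamma g^{[0]}+\Gamma M_L Hg+\h$ in both directions. The only superfluous element is the final closed-graph step: Theorem \ref{t ordUS} requires only the qualitative inclusion $y(\cdot,0,\vz_\B;g)\in\l^\infty(\B^0)$ for each $g\in\l^1(\B^0)$, so no norm bound on $\Gamma M_L H$ is needed.
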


\begin{proof}
\textbf{(i) $\Rightarrow$ (ii).} By Proposition \ref{p US} and Theorem \ref{t ordUS},
assertion (i) implies that the first order system (\ref{e ndnh}) considered in the space $\B^{0}$
is $(\l^1,\l^\infty)$-stable. This and (\ref{e xn=yn}) yield $(\l^1,\l^\infty)$-stability of (\ref{e nh}).
In particular,
\begin{equation} \label{e Gg0}
\Gamma g^{[0]} \in \l^\infty (\X) \quad \text{for any} \quad g \in \l^1 (\B^0) .
\end{equation}

For $g \in \l^1 (\B^0)$, equality (\ref{e rep1}),  $(\l^1,\l^\infty)$-stability of (\ref{e ndnh}),
and
$|x (n,0,\vz_\B;f)| \leq | x_n (0,\vz_\B;f) |_{\B^0}$ imply
\[
\Gamma (g^{[0]} + M_L H g) = x (\cdot,0,\vz_\B;g^{[0]} + M_L H g) \in \l^\infty (\X).
\]
Taking into account (\ref{e Gg0}), we get
$\Gamma M_L H g \in \l^{\infty} (\X)$.

Inverting the above arguments with the use of Remark \ref{r lp} one can get \textbf{(ii) $\Rightarrow$ (i).}
\end{proof}

The condition $\left( \Gamma M_L H \right) \, \l^1 (\B^0) \subseteq \l^{\infty} (\X)$ 
has a complicated nature.
The following sufficient conditions are simpler for understanding.

\begin{corollary} \label{c UEga=0}
Let a function $L: \Z^+ \to \L (\B^{0}, \X)$ define system (\ref{e nh}).
Then each of the following two conditions is sufficient for (\ref{e h}) to be US in $\B^{0}$:
\begin{description}
\item[(i)] System  (\ref{e nh}) is $(\l^\infty,\l^\infty)$-stable and
\begin{equation} \label{e LPlinf ga=0}
\text{there exists } \ m \in \Z^- \ \ \text{such that} \quad
\| L (\cdot) \Pr_{[-\infty,m]} \|_\infty :=
\sup_{n \in \Z^+} \| L (n) \Pr_{[-\infty,m]} \|_{\B^{0} \to \X} < \infty \ .
\end{equation}
\item[(ii)] System  (\ref{e nh}) is $(\l^1,\l^\infty)$-stable and
\begin{equation} \label{e LPl1}
\text{there exists } \ m \in \Z^- \ \ \text{such that} \quad
 \| L (\cdot) \Pr_{[-\infty,m]} \|_1 :=
 \sum_{n=0}^{\infty} \| L (n) \Pr_{[-\infty,m]} \|_{\B^{0} \to \X} < \infty \ .
\end{equation}
\end{description}
\end{corollary}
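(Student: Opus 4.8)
The plan is to deduce both assertions from Theorem \ref{t UEga=0} by checking its condition (ii). Both hypotheses already provide a suitable input-output stability: in case (ii) the $(\l^1,\l^\infty)$-stability is assumed outright, while in case (i) the $(\l^\infty,\l^\infty)$-stability yields $(\l^1,\l^\infty)$-stability via the embedding (\ref{e ||q<||p}) applied with $p=1$, $q=\infty$. Hence in both cases the whole task reduces to showing that $\left(\Gamma M_L H\right)\,\l^1(\B^0) \subseteq \l^\infty(\X)$. Fix $g \in \l^1(\B^0)$ and write $\h = Hg$. I would first record the $\ga=0$ substitute for Proposition \ref{p hlp} (which is stated only for $\ga>0$): from the explicit formula (\ref{e hm=sumg}) together with $|g^{[j]}(k)| \leq |g(k)|_{\B^0}$ one gets $|\h(n)|_{\B^0} \leq \sum_{k=0}^{n-1}|g(k)|_{\B^0} \leq \|g\|_1$, so that $\|\h\|_\infty \leq \|g\|_1$.

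The common device is to split, for the $m$ furnished by the respective hypothesis,
\[
(M_L H g)(n) = L(n)\h(n) = L(n)\Pr_{[-\infty,m]}\h(n) + L(n)\Pr_{[m+1,0]}\h(n),
\]
and to treat the two pieces separately. For the head piece, Proposition \ref{p |Lnk|<}(ii) supplies $C' := \sup_{n}\|L(n)\Pr_{[m+1,0]}\|_{\B^0\to\X} < \infty$ (it uses only $(\l^p,\l^q)$-stability, available under either hypothesis); since $\Pr_{[m+1,0]}$ is idempotent, $|L(n)\Pr_{[m+1,0]}\h(n)| \leq C'\,|\Pr_{[m+1,0]}\h(n)|_{\B^0}$. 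For the tail piece, the relevant hypothesis controls $L(n)\Pr_{[-\infty,m]}\h(n)$ through $|\h(n)|_{\B^0} \leq \|g\|_1$.

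For part (i) I would then bound the head piece by $C'\|g\|_1$ and the tail piece by $\big(\sup_n\|L(n)\Pr_{[-\infty,m]}\|_{\B^0\to\X}\big)\,\|g\|_1$ using (\ref{e LPlinf ga=0}); thus $M_L H g \in \l^\infty(\X)$, and $(\l^\infty,\l^\infty)$-stability gives $\Gamma M_L H g \in \l^\infty(\X)$. For part (ii) I would instead prove the stronger $M_L H g \in \l^1(\X)$: the tail piece is summable because $\sum_n |L(n)\Pr_{[-\infty,m]}\h(n)| \leq \|g\|_1 \sum_n\|L(n)\Pr_{[-\infty,m]}\|_{\B^0\to\X} < \infty$ by (\ref{e LPl1}), while the head piece is summable because $\Pr_{[m+1,0]}\h$ itself lies in $\l^1(\B^0)$. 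For this last point, formula (\ref{e hm=sumg}) shows that each coordinate $\h^{[j]}(n)$ with $m+1\leq j\leq 0$ is a sum over $k\in[\max\{0,m+1+n\},\,n-1]$; interchanging the order of summation, each $|g(k)|_{\B^0}$ is then counted at most $-m-1$ times, giving $\sum_n |\Pr_{[m+1,0]}\h(n)|_{\B^0} \leq (-m-1)\|g\|_1$. Finally $(\l^1,\l^\infty)$-stability yields $\Gamma M_L H g \in \l^\infty(\X)$.

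The only genuinely delicate point is the combinatorial bookkeeping for the head piece in part (ii): one must see that the projection $\Pr_{[m+1,0]}$ retains only a bounded, finite-memory window of the $g$-values, which is exactly what upgrades the merely bounded sequence $\h$ to an $\l^1$-summable one after projection. Everything else is a routine use of the triangle inequality together with the operator bounds already at our disposal.
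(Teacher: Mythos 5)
Your proposal is correct and follows essentially the same route as the paper: both cases are reduced to condition (ii) of Theorem \ref{t UEga=0}, using the bound $\|Hg\|_\infty \le \|g\|_1$ and, for case (ii), the splitting of $L(n)$ by $\Pr_{[-\infty,m]}$ and $\Pr_{[m+1,0]}$ together with the counting estimate $\sum_n |\Pr_{[m+1,0]}(Hg)(n)|_{\B^0} \le (-m-1)\|g\|_1$, which are exactly the paper's estimates (\ref{e H inf to 1}) and (\ref{e PjH1 to 1}). The only cosmetic difference is that in case (i) the paper combines the two pieces at once into $\sup_n\|L(n)\|_{\B^0\to\X}<\infty$ rather than keeping the head/tail split, which changes nothing of substance.
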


\begin{proof}
In the both cases we check assertion (ii) of Theorem \ref{t UEga=0} using the following estimates
\begin{eqnarray} \label{e H inf to 1}
\| H g \|_\infty := \sup_{n \in \Z^+} | (H g) (n) |_{\B^0} & \leq & \| g \|_1 , \\
\| \; \Pr_{[j,0]} \; (H g) (\cdot) \; \|_1 :=
\sum_{n=0}^{\infty} \left| \Pr_{[j,0]} (H g)(n) \right|_{B^0} & \leq & (-j) \; \| g \|_1 ,
\quad j \in \Z^- . \label{e PjH1 to 1}
\end{eqnarray}
The first one follows immediately from (\ref{e hm=sumg}). For the proof of the second, let us note that
(\ref{e hm=sumg}) implies
\[
\left| \Pr_{[j,0]} (H g)(n) \right|_{B^0} = \max_{j \leq m \leq 0} \left| \, (H g)^{[m]} (n) \, \right|
\leq \sum_{k=\max \{0, n+j\} }^{n-1}  |g (k)|_{\B^0} .
\]
Hence (recall the sum convention (\ref{e sumconv}))
\[
\sum_{n=0}^{\infty} \left| \Pr_{[j,0]} (H g)(n) \right|_{B^0} \leq
\sum_{n=1}^{\infty} \ \ \sum_{k=\max \{0, n+j\} }^{n-1}  |g (k)|_{\B^0} .
\]
Since each $ g (n) $ participates in the last sum
at most $(-j)$ times, we get (\ref{e PjH1 to 1}).

\textbf{(i) implies uniform stability of (\ref{e h}).}
 $(\l^1,\l^\infty)$-stability follows from  $(\l^\infty,\l^\infty)$-stability.

Formulae (\ref{e supLP<inf}) and (\ref{e LPlinf ga=0}) imply that
$\| L \|_\infty := \sup_{n \in \Z^+} \| L (n) \|_{\B^{0} \to \X}< \infty$.
From  (\ref{e H inf to 1}) we get
\[
\| M_L H g \|_\infty \leq \| L \|_\infty \| H g \|_\infty \leq \| L \|_\infty \| g \|_1 .
\]
So $M_L H g \in \l^\infty (\X)$ for $g \in \l^1 (\B^0)$ and therefore $(\l^\infty,\l^\infty)$-stability
implies $\Gamma M_L H g \in \l^{\infty} (\X)$.

\textbf{(ii) implies uniform stability of (\ref{e h}).}
Let $g \in \l^1 (\B^0)$.
Without loss of generality, we can assume $m \leq -1$ in
(\ref{e LPl1}).
Taking $j=m+1$,   one can get from (\ref{e PjH1 to 1}) and
(\ref{e supLP<inf}) that
\begin{eqnarray}
\sum_{n=0}^{\infty} \left| L (n) \Pr_{[m+1,0]} (H g) (n) \right| & \leq &
\| \,  L (\cdot) \Pr_{[m+1,0]} \, \|_\infty \| \, \Pr_{[m+1,0]} (H g) (\cdot)  \, \|_1 \leq \notag
\\
& \leq &
 (-m-1) \; \| L (\cdot) \Pr_{[m+1,0]} \|_\infty  \| g \|_1 <\infty . \label{e LPm0Hg l1}
\end{eqnarray}
From (\ref{e LPl1}) and (\ref{e H inf to 1}) we get
\begin{eqnarray}
\sum_{n=0}^{\infty} \left| L (n) \Pr_{[-\infty,m]} (H g) (n) \right| \leq
\| L (\cdot) \Pr_{[-\infty,m]} \|_1 \; \| H g \|_\infty \leq
\| L (\cdot) \Pr_{[-\infty,m]} \|_1 \; \| g \|_1 < \infty.
\label{e LPimHg l1}
\end{eqnarray}
Combining (\ref{e LPm0Hg l1}) and (\ref{e LPimHg l1}), we see that $M_L H g \in \l^1 (\X)$ for every
$g \in \l^1 (\B^0)$. From $(\l^1,\l^\infty)$-stability we get $\Gamma M_L H g \in \l^{\infty} (\X)$.
\end{proof}

\begin{remark}
Example \ref{ex LPlp} shows that condition (\ref{e LPl1}) in
Corollary \ref{c UEga=0} cannot be relaxed to condition $\| L (\cdot) \Pr_{[-\infty,-1]} \|_p < \infty$
with $p>1$.
\end{remark}

\section{Subdiagonal systems and independence of stability properties of the parameter $\ga$
\label{s Ind ga}}

The continuous embedding
\[
\B^\ga \subseteq \B^\delta , \quad
| \vphi |_{\B^\delta} \leq | \vphi |_{\B^\ga} \qquad
\text{for} \quad -\infty < \ga \leq \delta < \infty ,
\]
and the definitions of uniform stability and UE stability imply easily the following statement.

\begin{proposition} \label{p ga<delta}
Let $\delta \in \R$. If system (\ref{e h}) is UES (US) in $\X$ w.r.t. $\B^\delta$,
then for any $\ga \in (-\infty,\delta)$ system (\ref{e h}) is UES (resp., US) in $\X$ w.r.t. $\B^\ga$.
\end{proposition}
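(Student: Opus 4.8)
The plan is to transfer the exponential estimate from $\B^\delta$ to $\B^\ga$ by exploiting the continuous embedding $\B^\ga \subseteq \B^\delta$ stated just before the proposition, together with the accompanying norm domination $|\vphi|_{\B^\delta} \leq |\vphi|_{\B^\ga}$ (which holds because $m \leq 0$ and $\ga \leq \delta$ force $e^{\ga m} \geq e^{\delta m}$, so that $\sup_{m \leq 0}|\vphi^{[m]}|e^{\ga m} \geq \sup_{m \leq 0}|\vphi^{[m]}|e^{\delta m}$). First I would record that the same $L$ which defines (\ref{e h}) on $\B^\delta$ also defines it on $\B^\ga$: since $\B^\ga \subseteq \B^\delta \subseteq \dom L(n)$ and $|L(n)\vphi| \leq \|L(n)\|_{\B^\delta \to \X}\,|\vphi|_{\B^\delta} \leq \|L(n)\|_{\B^\delta \to \X}\,|\vphi|_{\B^\ga}$, each $L(n)$ lies in $\L(\B^\ga,\X)$. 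Consequently the solution $x(\cdot,\tau,\vphi;\fz)$ through $(\tau,\vphi)$ with $\vphi \in \B^\ga$ is literally the same discrete function whether the initial datum is regarded as an element of $\B^\ga$ or of the larger space $\B^\delta$, so both UE-stability statements concern estimates on one and the same solution.

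The decisive step is then simply to insert the norm inequality into Definition \ref{d UESn}(1). By the UES hypothesis w.r.t. $\B^\delta$ there exist $K \geq 1$ and $\nu > 0$ with $|x(n,\tau,\vphi;\fz)| \leq K e^{-\nu(n-\tau)}|\vphi|_{\B^\delta}$ for all $n \geq \tau \geq 0$; for $\vphi \in \B^\ga$ this chains into $|x(n,\tau,\vphi;\fz)| \leq K e^{-\nu(n-\tau)}|\vphi|_{\B^\delta} \leq K e^{-\nu(n-\tau)}|\vphi|_{\B^\ga}$, which is exactly (\ref{e ExStX}) for the phase space $\B^\ga$ with the very same constants $K$ and $\nu$. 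For the US assertion the identical computation with $\nu=0$ applies verbatim. Note that the decay rate $\nu$ is preserved rather than degraded, so no optimization over $\ga$ is required and the conclusion holds simultaneously for every $\ga \in (-\infty,\delta)$.

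I expect no genuine obstacle beyond one piece of bookkeeping, namely confirming that $\B^\ga$ is itself a phase space so that Definition \ref{d UESn} even applies. This I would check by splitting $|x_n|_{\B^\ga} = \sup_{m \leq 0}|x(n+m)|e^{\ga m}$ into the finitely many indices $m \in \{-n,\dots,0\}$ (all values $|x(n+m)|$ being finite) and the tail $m < -n$, where $x(n+m) = x_0^{[n+m]}$ and $|x(n+m)|e^{\ga m} = e^{-\ga n}\,|x_0^{[n+m]}|e^{\ga(n+m)} \leq e^{-\ga n}\,|x_0|_{\B^\ga}$, so the supremum is finite whenever $x_0 \in \B^\ga$. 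With this in hand the argument is purely the monotonicity of the seminorms in $\ga$, which is why the proposition follows ``easily'' as asserted.
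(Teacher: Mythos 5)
Your proposal is correct and follows exactly the route the paper intends: the paper gives no separate proof, stating only that the continuous embedding $\B^\ga \subseteq \B^\delta$ with $|\vphi|_{\B^\delta} \leq |\vphi|_{\B^\ga}$ for $\ga \leq \delta$, combined with the definitions, yields the claim, and your chaining of the two inequalities (with the same $K$ and $\nu$) is precisely that argument. The additional bookkeeping you supply --- that $L(n) \in \L(\B^\ga,\X)$, that the solution is the same function in either phase space, and that $\B^\ga$ is closed under taking prehistories --- is correct and only makes explicit what the paper leaves implicit.
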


It is easy to see that the inverse implication is not true (for instance, from Example \ref{ex ga<delta}).

In this section we will show that under assumption (\ref{e LPlinf}) with $\ga>0$,
uniform stability and UE stability in $\B^\delta$ do not depend on the choice of the parameter
$\delta \in (0,\ga]$, and, moreover, do not depend on the 'upper-triangular' part of
the infinite operator-matrix $ \bigl( \, L (n,j) \, \bigr)_{n,j \geq 0}$.

\begin{definition} \label{d tr}
Assume that $L(n)$ is defined on $\B_{\fin}$ for all $n \in \Z^+$ and that
$L (n,k) \in \L (\X,\X)$ for all $n,k \in \Z^+$.
Then we will say that (\ref{e nh}) is \emph{a subdiagonal system} if
\begin{eqnarray} \label{e Lupt=0}
 L (n) \Pr_{[-\infty,-n]} \vphi = \vz_\X , \quad \vphi \in \dom L (n), \quad n \in \Z^+ .
\end{eqnarray}
\end{definition}

If (\ref{e nh}) is a subdiagonal system, then it
can be written in the form
\begin{eqnarray} \label{e nhtr}
x(n+1) = \sum_{j = 1}^n L (n,n-j) x (j) + f(n), \ n \in \Z^+ , \
(x(0)= \vz_\X \text{ due to  (\ref{e sumconv}})
\end{eqnarray}
 and it can be considered on the whole vector space $\X^{\Z^-}$.

\begin{remark}
One can see that subdiagonal system is a particular case of Volterra difference system with unbounded 
delay (see (\ref{e Volunb})).  Subdiagonal systems are characterized by the condition $L(n,n) = 0$.
\end{remark}

Having an arbitrary operators $L(n): \dom  L(n) \to \X$, $n \in \Z^+$, with domains
satisfying $\B_\fin \subseteq \dom  L(n) \subseteq \X^{\Z^-} $,
we define the operator-valued function $ L_\subd $  on $\Z^+$ by
\begin{eqnarray} \label{e trL0}
 L_\subd (0) \vphi & = & \vz_\X , \qquad  \vphi \in \X^{\Z^-}, \\
 L_\subd (n) \vphi & = &  L (n) \Pr_{[-n+1,0]} \vphi ,
\quad \vphi \in \X^{\Z^-}, \quad n \in \N .\label{e trLn}
\end{eqnarray}
If $L(n,k) \in \L (\X,\X)$ for all $n,k \in \Z^+$, then $ L_\subd$ defines a subdiagonal system. 
For a function $ L:\Z^+ \to \L (\B^\ga,\X)$ the assumption $L(n,k) \in \L (\X,\X)$
is always fulfilled and therefore the following definition is natural.

\begin{definition} \label{d subdas}
Let $\ga \in \R$ and let a function $ L:\Z^+ \to \L (\B^\ga,\X)$ define system
(\ref{e nh}). Then we will say that the system defined by the function $L_\subd$ is
the subdiagonal system associated with system (\ref{e nh}).
\end{definition}

Definition \ref{d subdas} is justified by the following proposition.

\begin{proposition} \label{p tr(lp,lq)}
In the settings of Definition \ref{d subdas} system (\ref{e nh}) is $(\l^p,\l^q)$-stable 
if and only if
the associated subdiagonal system  is
$(\l^p,\l^q)$-stable.
\end{proposition}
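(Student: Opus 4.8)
The plan is to prove something slightly stronger than the stated equivalence: for every right-hand side $f$, the solution of system (\ref{e nh}) and the solution of the associated subdiagonal system (the one defined by $L_\subd$) \emph{through the same zero initial data} $(0,\vz_\B;f)$ coincide as functions $\Z\to\X$. Once this is established, the conclusion is immediate, because $(\l^p,\l^q)$-stability of either system is, by Definition \ref{d lplq st}, a statement only about the single trajectory $x(\cdot,0,\vz_\B;f)$; if the two trajectories agree for all $f$, the two stability properties are literally the same condition.

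Write $x(\cdot):=x(\cdot,0,\vz_\B;f)$ for the solution of (\ref{e nh}) and $\wt x(\cdot)$ for the solution of the subdiagonal system, both through $(0,\vz_\B;f)$. The key observation I would use is that zero initial data makes each prehistory vector finitely supported in a controlled way: since $x(k)=\vz_\X$ for all $k\leq 0$, the coordinates of the prehistory satisfy $x_n^{[m]}=x(n+m)=\vz_\X$ whenever $m\leq -n$, and the same holds for $\wt x$. Consequently $\Pr_{[-n+1,0]}x_n=x_n$ for every $n\in\N$ (and likewise for $\wt x_n$), so that along these trajectories $L(n)$ acts exactly as $L_\subd(n)=L(n)\Pr_{[-n+1,0]}$.

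First I would check the base case. At $n=0$ both solutions have $x_0=\wt x_0=\vz_\B$, and by (\ref{e nh}), the definition (\ref{e trL0}) of $L_\subd(0)$, and linearity of $L(0)$,
\[
x(1)=L(0)\vz_\B+f(0)=f(0)=L_\subd(0)\vz_\B+f(0)=\wt x(1).
\]
Then I would argue by induction on $n$. Assuming $x(k)=\wt x(k)$ for all $k\leq n$ gives $x_n=\wt x_n$, and invoking the support observation,
\[
\wt x(n+1)=L_\subd(n)\wt x_n+f(n)=L(n)\Pr_{[-n+1,0]}x_n+f(n)=L(n)x_n+f(n)=x(n+1),
\]
which closes the induction and yields $x=\wt x$, as desired.

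There is no serious obstacle here; the only point requiring care is the observation that zero initial data forces every prehistory vector $x_n$ to vanish on the coordinates with index $m\leq -n$, so that the part $L(n)\Pr_{[-\infty,-n]}$ — precisely the part discarded in passing from $L$ to $L_\subd$ — never acts on a nonzero coordinate along a solution with zero initial data. This is exactly why that part of the coefficients is irrelevant for $(\l^p,\l^q)$-stability, in agreement with Remark \ref{r lplq}, and it is what makes the two trajectories, and hence the two stability properties, coincide.
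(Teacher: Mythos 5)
Your proof is correct and is essentially the paper's own argument: the paper disposes of this proposition in one sentence by noting that $(\l^p,\l^q)$-stability depends only on the trajectory $x(\cdot,0,\vz_\B;f)$, along which the discarded parts $L(n)\Pr_{[-\infty,-n]}$ never see a nonzero coordinate. Your induction merely makes that observation explicit, handling the $n=0$ case via (\ref{e trL0}) correctly.
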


For the proof it is enough to notice that Definition \ref{d lplq st} implies that
$(\l^p,\l^q)$-stability  of system (\ref{e nh}) does not depend on the parts
$L (n) \Pr_{[-\infty,-n]}$ of the operators $L(n)$.

\begin{theorem} \label{t ga ind}
Let $\ga >0$ and let a function $ L:\Z^+ \to \L (\B^\ga,X)$ define system (\ref{e nh}).
Assume that (\ref{e LPlinf}) holds.
Then the following statements are equivalent:
\begin{description}
\item[(i)] System (\ref{e nh}) is UES (US) in $B^\ga$.
\item[(ii)] The subdiagonal system associated with  (\ref{e nh}) is UES (resp, US) in $B^\ga$.
\item[(iii)] For any $\delta >0$ and any function $\wt L : \Z^+ \to \L(\B^{\delta}, \X)$ such that
\begin{eqnarray}
& \wt L_\subd (n) = L_\subd (n) ,  \quad n \in \Z^+, \quad \text{and} \notag
\\
& \sup_{n \in \Z^+} \| \wt L (n) \Pr_{[-\infty, l]} \|_{ \B^{\scriptstyle \delta} \to \X}
< \infty  \quad \text{for a certain} \quad l \in \Z^-,
\label{e tLPlinf}
\end{eqnarray}
the system defined by $\wt L (\cdot)$
is UES (resp., US) in $\B^{\delta}$.
\end{description}
\end{theorem}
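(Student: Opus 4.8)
The plan is to reduce all three statements to the characterizations already established in Theorems~\ref{t UES} and \ref{t US}. The two cases in the statement run in perfect parallel: for UES I would invoke Theorem~\ref{t UES} with an arbitrary pair $(p,q)$ satisfying (\ref{e pqneq}), and for US Theorem~\ref{t US} with $(p,q)=(1,\infty)$. In either case the criterion for a defining function to produce a UES (resp.\ US) system on a phase space $\B^\delta$ ($\delta>0$) is that the associated nonhomogeneous system be $(\l^p,\l^q)$-stable (resp.\ $(\l^1,\l^\infty)$-stable) \emph{and} that the boundedness condition (\ref{e LPlinf}) hold on $\B^\delta$. Thus the whole theorem amounts to showing that both ingredients are shared by the functions $L$, $L_\subd$, and every admissible $\wt L$ of (iii).

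For the stability ingredient I would use Proposition~\ref{p tr(lp,lq)}. Since a solution with zero initial data satisfies $x_n=\Pr_{[-n+1,0]}x_n$, it depends only on the subdiagonal part of the defining function; hence $(\l^p,\l^q)$-stability is unaffected by the ``upper-triangular'' entries $L(n)\Pr_{[-\infty,-n]}$ and does not depend on the phase-space parameter. Consequently $L$, its associated subdiagonal function $L_\subd$ (for which $(L_\subd)_\subd=L_\subd$), and any $\wt L$ from (iii) (for which $\wt L_\subd=L_\subd$ by hypothesis) all generate systems with the \emph{same} $(\l^p,\l^q)$- and $(\l^1,\l^\infty)$-stability behaviour, regardless of whether they are considered on $\B^\ga$ or on $\B^\delta$.

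For the boundedness ingredient, condition (\ref{e LPlinf}) is assumed for $L$ and is built into hypothesis (\ref{e tLPlinf}) for $\wt L$, so only $L_\subd$ needs attention. Here I would factor $L_\subd(n)\Pr_{[-\infty,m]}=L(n)\Pr_{[-\infty,m]}\Pr_{[-n+1,0]}$ and use that $\Pr_{[-n+1,0]}$ has norm at most $1$ on $\B^\ga$ (it merely discards coordinates), giving $\sup_n\|L_\subd(n)\Pr_{[-\infty,m]}\|_{\B^\ga\to\X}\le\sup_n\|L(n)\Pr_{[-\infty,m]}\|_{\B^\ga\to\X}<\infty$. The same factorization also shows $L_\subd(n)\in\L(\B^\ga,\X)$, so $L_\subd$ genuinely defines a subdiagonal system on $\B^\ga$.

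With both ingredients in hand the equivalences follow by bookkeeping. Applying Theorem~\ref{t UES} (resp.\ Theorem~\ref{t US}) on $\B^\ga$ to $L$ and to $L_\subd$, and combining with the two preceding paragraphs, yields (i)$\Leftrightarrow$(ii). For (i)$\Rightarrow$(iii): assuming (i), the $L$-system is stable, hence by the stability invariance so is every admissible $\wt L$-system; together with the assumed (\ref{e tLPlinf}), Theorem~\ref{t UES} (resp.\ Theorem~\ref{t US}) on $\B^\delta$ makes the $\wt L$-system UES (resp.\ US) in $\B^\delta$. The converse (iii)$\Rightarrow$(i) is immediate by specializing $\delta=\ga$ and $\wt L=L$, which is admissible precisely because (\ref{e LPlinf}) is assumed. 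The only genuinely new computation is the boundedness transfer to $L_\subd$; the remaining steps are applications of results already proved, the main care being to keep the UES/US and $\B^\ga$/$\B^\delta$ bookkeeping straight.
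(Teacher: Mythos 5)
Your proposal is correct and follows exactly the route the paper takes: the paper's entire proof is the one-line remark that the theorem ``follows immediately from Proposition~\ref{p tr(lp,lq)} and Theorem~\ref{t UES} (resp., Theorem~\ref{t US})'', and you have simply filled in the bookkeeping, including the one detail the paper leaves implicit (that $L_\subd$ inherits condition (\ref{e LPlinf}) via $\|\Pr_{[-n+1,0]}\|_{\B^\ga\to\B^\ga}\le 1$). No gaps.
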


The theorem follows immediately from Proposition \ref{p tr(lp,lq)} and
Theorem \ref{t UES} (resp., Theorem \ref{t US}).

\begin{remark}
\textbf{1)} Taking the function $L$ defined in Example \ref{ex US ga=0 repl},
one can see that the system defined by $L_\subd$ is
US in any of $\B^\ga$ with $\ga > 0$, while the system defined by $L$ is not US in $\B^\ga$, $\ga > 0$.
So Example \ref{ex US ga=0 repl} shows that neither condition (\ref{e LPlinf}) nor (\ref{e tLPlinf})
can be dropped in the US version of Theorem \ref{t ga ind}.
For the UES case of Theorem \ref{t ga ind} the same conclusion can be inferred  from
Example \ref{ex BB05 ex2}.

\textbf{2)} The systems introduced in Examples \ref{ex US ga=0 repl} and
 \ref{ex BB05 ex2} satisfy condition (\ref{e LPlinf}) with $\ga \leq 0$.
This  implies easily that in general Theorem \ref{t ga ind} is not valid for $\ga \leq 0$ in both the
$US$ and $UES$ (in $\X$) versions.
\end{remark}

\section{Examples  \label{s ex}}

In this section we assume that the Banach space $\X $ is nontrivial, i.e., $\X \neq \{ \vz_\X \}$.
In particular, all the arguments below are valid if $\X = \R$
or $\X = \C$.

The following example
shows that condition (\ref{e LPlinf}) cannot be omitted
in Theorem \ref{t UES} and Corollary \ref{c lplq}.

\begin{example} \label{ex lplq ind}
Let $1 \leq q < r  \leq \infty$, $a (\cdot) \in l^{r} (\R) \setminus l^{q} (\R)$.
Let us define $L(n):\X^{\Z^-} \to \X$ by $L(0) = 0$ and $L(n) \vphi  =  a(n) \vphi^{[-n+1]}$ for
$n \in \N$.
The function $L$ defines the system
\begin{eqnarray} \label{e eq exlplq}
x (1) = f(0), \qquad x(n+1) = a(n) x (1) + f(n) , \quad n \in \N ,
\end{eqnarray}
on all the spaces $\B^\ga$ with $\ga \in \R$.
It is easy to see that for any $p \leq r$
system (\ref{e eq exlplq}) is $(\l^p,\l^r)$-stable, but it \emph{is not $(\l^p,\l^q)$-stable}.

Indeed, considering the solution $x (n) = x (n,0,\vz_{\B};f) $ with $f \in \l^p$, we get
$ x (n+1) = a(n) f(0) + f(n)$,  $ n \in \N$.
Since $ a (\cdot) \in \l^{r} $ and $f (\cdot) \in \l^p \subseteq \l^{r}$,
we see that $x( \cdot) \in \l^{r}$.
On the other hand, it follows from $ a (\cdot) \notin \l^{q} $ that $x( \cdot) \notin \l^{q}$
whenever $f(0) \neq 0$ and $f \in \l^{p}  \cap \l^q$.

Note that
\[
x (n+1,1,\vphi; \fz) = a (n) x (1,1,\vphi; \fz) = a (n) \vphi^{[0]}, \quad n \in \N .
\]
Hence, $a (\cdot) \notin l^{q}$ implies that the homogeneous system associated with 
(\ref{e eq exlplq}) \emph{is not UES in $\X$
w.r.t. any of the spaces $\B^\ga$, $\ga \in \R$}.
\end{example}

\vspace{2mm}

The next example shows that
condition (\ref{e LPlinf}) cannot be replaced by the
less restrictive condition (\ref{e LPmn}) in Theorems \ref{t UES} and \ref{t US}.

\begin{example} \label{ex Pmn}
Let $1 \leq p \leq q \leq \infty$, $\ga > 0$, and let $\B^{\ga}$ be the phase space.
Let a sequence $m_n \in \Z^-$, $n\in \Z^+$, be such that
$\liminf m_n = -\infty$. Then there exists $L:\Z^+ \to \L (\B^{\ga}, \X)$ such that:
\item[(i)] condition (\ref{e LPmn}) is fulfilled,
\item[(ii)] system (\ref{e nh}) is $(\l^p,\l^q)$-stable,
\item[(iii)] but $\sup_{n \in \Z^+} \| L (n) \|_{\B^\ga \to \X} = \infty$ and so system (\ref{e h})
\emph{is neither UES nor US in $\X$} (due to Remark \ref{r |L|<inf} and Proposition \ref{p US to Linf}).

Let us construct the corresponding operator-function. Since $\liminf m_n = -\infty$, we can choose an
increasing sequence $n_k \in \Z^+$, $k \in \N$, such that
\begin{eqnarray}
n_k & > & n_{k-1} + k +1 , \label{e nk nk+1}\\
m_{n_k} & < & - k . \label{e mnk<-k}
\end{eqnarray}
Let us define $L(\cdot)$ by
\begin{eqnarray}
L(n) & = & 0 \quad \text{if} \quad n \not \in \{ n_k \}_1^{\infty} , \label{e Ln n neq nk}
 \\
L(n_k) \vphi & = & \vphi^{[-k]} , \quad \vphi \in \B^\ga, \quad k \in \N . \label{e Lnk}
\end{eqnarray}

Then condition (\ref{e LPmn}) is fulfilled since (\ref{e Ln n neq nk}), (\ref{e mnk<-k}) and (\ref{e Lnk})
imply $L (n) \Pr_{[-\infty,m_n]} = 0$.

Further, $\sup_{n \in \Z^+} \| L (n) \|_{\B^\ga \to \X} = \infty$. Indeed, (\ref{e Lnk}) yields
\[
\| L(n_k) \|_{\B^\ga \to \X} =
\sup_{\vphi \neq 0_\B} \frac{| \vphi^{[-k]} |}{|\vphi|_{\B^\ga}} = e^{k\ga} , \qquad
\text{and therefore} \qquad \lim_{k \to \infty} \| L(n_k) \|_{\B^\ga \to \X} =  \infty .
\]

On the other hand, system (\ref{e nh}) is $(\l^p,\l^q)$-stable. Indeed, due to $p \leq q$ and
(\ref{e ||q<||p}),
it is enough to show that $x (\cdot,0,\vz_\X;f) \in \l^p$ for any $f (\cdot) \in \l^p$.
It is easy to see that
\begin{eqnarray}
x (n+1,0,\vz_\X;f) & = & f (n) \qquad \text{if} \quad n \not \in \{ n_k \}_1^{\infty} ,
\label{e xn n not nk}
 \\
x (n+1,0,\vz_\X;f) & = & x_{n_k}^{[-k]} + f(n_k) \quad \text{if} \quad n = n_k . \notag 
\end{eqnarray}
By definition, $x_{n_k}^{[-k]} = x (n_k-k)$. Taking into account (\ref{e nk nk+1}),
we see that $n_k - k > n_{k-1}+1 $ and so (\ref{e xn n not nk}) implies $x_{n_k}^{[-k]} = f (n_k-k-1)$.
Hence,
\begin{eqnarray}
x (n+1,0,\vz_\X;f) = f (n_k-k-1) + f(n_k) \quad \text{if} \quad n = n_k . \label{e xnk+1}
\end{eqnarray}
By (\ref{e nk nk+1}) both sequences $\{n_k\}$ and $\{n_k-k-1\}$ are strictly increasing.
Thus, by Minkowski's inequality,
\begin{eqnarray*}
\| x (\cdot ,0,\vz_\X;f)\|_p \leq 2 \| f \|_p. \label{e |x|<2|f|}
\end{eqnarray*}
\end{example}

\vspace{2mm}

\cite[Example 2]{BB_JMAA05} can be used to prove that neither condition (\ref{e LPlinf}) 
nor condition (\ref{e tLPlinf})
can be dropped in the UES version of Theorem \ref{t ga ind} and that for $\ga \leq 0$ the 
implication (iii)$\Rightarrow$(i) of Theorem \ref{t UES} is not valid in general.

\begin{example}[\cite{BB_JMAA05}] \label{ex BB05 ex2}
Define  $L(n)$ by $L(n) \vphi = \frac 12 \vphi^{[0]}+ \vphi^{[-n]}$, $n \in \Z^+$.
The function $L$ defines the system
\begin{eqnarray} \label{e eqBB05ex}
x(n+1) = \frac 12 x(n)+ x (0) + f(n) , \quad \quad n \in \Z^+ .
\end{eqnarray}

Let us show that for any $\ga \leq 0$:
\item[(i)] function $L$ satisfies condition (\ref{e LPlinf}),
\item[(ii)] system (\ref{e eqBB05ex}) is $(\l^p,\l^q)$-stable for any $1\leq p \leq q \leq \infty$,
\item[(iii)] but the homogeneous system associated with (\ref{e eqBB05ex}) 
\emph{is not UES in $\X$ w.r.t. $\B^\ga$}.

Indeed,
(\ref{e LPlinf}) is satisfied in its strongest form (i.e., with $m =0$) since for $\ga \leq 0$
\[
| L (n) \vphi | \leq \frac 12 |\vphi^{[0]}| + |\vphi^{[-n]}| \leq 
\left( \frac 12 + e^{n\ga} \right) |\vphi |_{\B^\ga} \leq \frac 3 2 |\vphi |_{\B^\ga} .
\]
By induction,
\[
x (n , 0, \vz_\B; f ) = \sum_{k=0}^{n-1} 2^{-(n-k)+1}  f(k) , \quad n \in \Z^+ .
\]
Applying Young's inequality for convolutions (see e.g. \cite[Problem VI.11.10]{DSh58}),
one gets  $(\l^p,\l^q)$-stability for $p \leq q $.
It follows from
$
x (n , 0, \vphi; \fz ) = (2- 2^{-n}) x(0) = (2- 2^{-n}) \vphi^{[0]}
$
that system $x(n+1) = \frac 12 x(n)+ x (0) $ is not UES in $\B^\ga$.

\end{example}

\vspace{2mm}

The following example shows that in general Theorem \ref{t US} is not valid for
$\ga=0$ and that neither condition (\ref{e LPlinf}) nor (\ref{e tLPlinf})
can be dropped in the US version of Theorem \ref{t ga ind}.

\begin{example}[cf. Example 1 in \cite{BB_JMAA05}] \label{ex US ga=0 repl}
Operators $L(n) \vphi  =  \vphi^{[0]} + \vphi^{[-n]}$
define the system
\begin{eqnarray} \label{e eq exUSga=0}
x(n+1) = x(n)+ x (0) + f(n) , \quad \quad n \in \Z^+ .
\end{eqnarray}

It is easy to see that:
\item[(i)] system (\ref{e eq exUSga=0}) is $(\l^1,\l^\infty)$-stable,
\item[(ii)] condition (\ref{e LPlinf}) is fulfilled for $\ga=0$,
\item[(iii)] but the homogeneous system associated with (\ref{e eq exUSga=0}) 
\emph{is not US} in $\X$ w.r.t. $\B^0$ (and so is not US in $\B^0$).

Indeed, induction shows that
\begin{eqnarray} \label{e x=sumf}
x (n,0,\vz_\X;f) = \sum_{k=0}^{n-1} f(k) , \quad n \in \N .
\end{eqnarray}
So (\ref{e eq exUSga=0}) is $(\l^1,\l^\infty)$-stable.
For $\ga=0$ condition (\ref{e LPlinf}) is fulfilled with $m=0$ since
$\| L (n) \|_{\B^0 \to \X} = 2$.
The system $x(n+1) = x(n)+ x (0)$ is not US in $\X$ since
$x (n,0,\vphi;\fz) = (n+1) \vphi^{[0]} $. 
\end{example}

\vspace{2mm}

From the next example, one can see that Condition (\ref{e LPl1}) in
Corollary \ref{c UEga=0} cannot be relaxed to condition $\| L (\cdot) \Pr_{[-\infty,-1]} \|_p < \infty$
with $p>1$.

\begin{example}[cf. Example 3.1 in \cite{CKRV97}] \label{ex LPlp}
This is a development of Example \ref{ex US ga=0 repl}.
Let $1<p\leq \infty$, $a (\cdot) \in l^p (\R) \setminus l^1 (\R)$ and
$a(n) \geq 0$ for all $n \in \N$.
Operators $L(n) \vphi  =  \vphi^{[0]} + a(n) \vphi^{[-n]} $ define
the system
\begin{eqnarray} \label{e eq exLPlp}
x(n+1) = x(n)+ a(n) x (0) + f(n) . 
\end{eqnarray}
One can see that:
\item[(i)] system (\ref{e eq exLPlp}) is $(\l^1,\l^\infty)$-stable,
\item[(ii)] $\| L (\cdot) \Pr_{[-\infty,-1]} \|_p < \infty$,
\item[(iii)] but the homogeneous system associated with (\ref{e eq exLPlp}) 
\emph{is not US in $\X$ w.r.t. $\B^0$}.

Indeed, (\ref{e eq exLPlp}) is $(\l^1,\l^\infty)$-stable since (\ref{e x=sumf}) still holds.
From
\[
| L (n) \Pr_{[-\infty,-1]} \vphi | = | a(n) \vphi^{[-n]} |
\leq a(n) | \vphi |_{\B^0} , 
\]
one can get $\| L (\cdot) \Pr_{[-\infty,-1]} \|_p < \infty$.
The system $x(n+1) = x(n)+ a(n) x (0)$ is not US in $\X$ since
\[
x (n,0,\vphi;\fz) = \left( 1 + \sum_{k=0}^{n-1} a(k) \right) \vphi^{[0]} 
\]
and $\sum_{k=0}^{n-1} a(k) \to +\infty$ as $n \to \infty$.
\end{example}

\vspace{2mm}

The example below shows that the inverse implication in
Proposition \ref{p ga<delta} is not true.

\begin{example} \label{ex ga<delta}
Let $\delta >0$. Consider the system
\begin{eqnarray} \label{e eq exga<delta}
x(n+1) = n e^{- n\delta } x (0) + f(n) . 
\end{eqnarray}
Corresponding operators $L(n)$ are defined by
$L(n) \vphi = n e^{- n\delta } \vphi^{[-n]}$.
Then the homogeneous system associated with (\ref{e eq exga<delta}) is UES in $\X$ w.r.t. $\B^\ga$ 
for all $\ga \in (0, \delta)$ (and so for all $\ga \in (-\infty, \delta)$ due to 
Proposition \ref{p ga<delta}), but \emph{is not US in $\X$ w.r.t.} $\B^\delta$.

Indeed, note that
$x (\cdot +1,0,\vz_\X;f) = f(\cdot)$. Hence system (\ref{e eq exga<delta}) is $(\l^p,\l^q)$-stable for any
$1 \leq p \leq q \leq \infty$. From this and Theorem \ref{t UES} (Theorem \ref{t US}),
we see that system $x(n+1) = n e^{- n\delta } x (0)$ is UES (resp., US) in $\B^\ga$ with $\ga >0$ 
if and only if condition
(\ref{e LPlinf}) is fulfilled. Using the specific form of $L(n)$, one obtains that for $n \geq -m$,
\[
\| L (n) \Pr_{[-\infty,m]} \|_{\B^\ga \to \X} =
\| L (n) \|_{\B^\ga \to \X} =
\sup_{\vphi \neq 0_\B} \frac{n e^{- n\delta } | \vphi^{[-n]} |}{|\vphi|_{\B^\ga}} =
n e^{- n\delta } e^{n\ga} .
\]
Thus, (\ref{e LPlinf}) is fulfilled exactly when $\ga < \delta$.
\end{example}

\section{Discussion and Open Problems \label{s Disc}}

In the present paper we studied the connection of $(\l^p,\l^q)$-stability
and uniform (exponential) stability. 
The paper comprehensively investigates the topic of
Bohl-Perron type criteria for systems with infinite delay defined on the most popular
family $\B^\ga$, $\ga >0$, of fading phase spaces (for the definition of fading phase spaces see
e.g. \cite{MM04} and the discussion in the monograph \cite{HMN91}).
However, there are still relevant open problems.

\textbf{1.}
The choice of an appropriate seminormed or normed phase space $\B$ for the system (\ref{e nh}) 
is determined by the requirement $L (n) \in \L (\B,\X)$ for all $n$.
For the Volterra difference system (\ref{intro1}) and phase spaces $\B^\ga$ this condition takes 
the form \begin{equation}
\label{e Vol boundness cond}
\sum_{k = 0}^{\infty} e^{k \gamma} \| L(n,k) \|_{\X \to \X} < \infty , \qquad n \in \Z^+ .
\end{equation}
If we are interested in the question of uniform (exponential) stability in $\B^\ga$ ($\ga  > 0$), 
we also should take into account the fact that $\sup_{n \geq 0} \| L(n) \|_{\B^\ga \to \X} < \infty$ 
is a necessary condition (see Remark \ref{r |L|<inf} and Proposition \ref{p US to Linf}).
In the case of Volterra system (\ref{intro1}), condition 
$\sup_{n \geq 0} \| L(n) \|_{\B^\ga \to \X} < \infty$ becomes
\begin{equation} \label{e B0 Vol}
\sup_{n \geq 0} \ \sum_{k = 0}^{\infty} \ e^{k \gamma} \| L(n,k) \|_{\X \to \X} < \infty .
\end{equation}
On the other hand, having two phase spaces $\B_1$ and $\B_2$ with continuous embedding 
$\| \cdot \|_{\B_2} \leq \| \cdot \|_{\B_1}$, one can see that uniform (exponential) stability in 
$\X$ w.r.t.
$\B_2$ is a stronger property than uniform (exponential) stability in $\X$ w.r.t.
$\B_1$ (cf. Proposition \ref{p ga<delta}).

If (\ref{e Vol boundness cond}) or the stronger condition
(\ref{e B0 Vol})
is violated for all $\ga>0$, but (\ref{e B0 Vol}) holds for $\ga=0$, then it is reasonable to consider 
the question of uniform (exponential) stability in $\X$ w.r.t. $\B^0$. However, the criteria of uniform 
and UE stabilities in fading phase spaces $\B^\ga$ with $\ga >0$
(Theorems \ref{t UES} and \ref{t US}) are \emph{not valid} in the non-fading space $\B^0$ 
(see Examples \ref{ex BB05 ex2} and \ref{ex US ga=0 repl}).

\emph{We were not able to get a comprehensive description of uniform and UE stabilities in $\B^0$ 
in terms of $(\l^p,\l^q)$-stability}. For uniform stability, we obtained a criterion of somewhat different 
type (Theorem \ref{t UEga=0}), but one of its conditions has a complicated nature.

\begin{problem} \label{prob US B0}
Find a complete description of uniform and uniform exponential stabilities in the phase spaces $\B^0$ 
in terms of $(\l^p,\l^q)$-stability and certain boundedness conditions on the coefficients $L(n)$.
\end{problem}

\begin{example}
An example of a Volterra system that is UES in $\X$ w.r.t. $\B^0$ and does not satisfy 
(\ref{e Vol boundness cond}) for all $\ga>0$ is
\begin{equation}
x(n+1) = \sum_{k=-\infty}^{n} \frac{e^{-n}}{(n-k+1)(n-k+2)} x(k) , \quad n\geq 0.
\end{equation}
Indeed, it is easy to see by induction that 
$| x (\tau+k,\tau, \vphi; \fz) | \leq e^{-(\tau+k-1)} | \vphi |_{\B^0} $ 
for all $\tau \geq 0$ and $k \geq 1$.
\end{example}

Note that in \cite{CKRV00} exponential stability is understood in the sense of exponentially 
decaying estimates on the fundamental matrix.  This stability property does not depend on 
the choice of the phase space and, for the particular case of Volterra systems of 
the form (\ref{e Volunb}), is stronger than UES stability in $B^\ga$ for any $\ga >0 $. 
The idea of \cite{CKRV00} to consider  $(\l^p,\l^q)$-stability with weighted $\l^p$-spaces
may provide an approach to Problem \ref{prob US B0}.

In Corollary \ref{c UEga=0} we give two sufficient conditions of Bohl-Perron type for uniform stability 
in $\B^0$. One can see that, in these conditions, the assumption on $L(n)$ is connected with 
the parameter $p$ (which equals $1$ or $\infty$) in $(\l^p,\l^\infty)$-stability.
The question is whether the interpolation to intermediate values of $p \in (1,\infty)$ also provides
sufficient conditions.

\begin{problem} \label{prob interp}
More precisely, for $p \in (1,\infty)$, prove or disprove that system (\ref{e h}) is US in $\B^{0}$ if  
 (\ref{e nh}) is $(\l^p,\l^\infty)$-stable and
\begin{equation*} \label{open2}
\text{there exists } \ m \in \Z^- \ \ \text{such that} \quad
\| L (\cdot) \Pr_{[-\infty,m]} \|_p^p :=
\sum_{n=0}^{\infty} \| L (n) \Pr_{[-\infty,m]} \|_{\B^{0} \to \X}^p < \infty \ .
\end{equation*}
\end{problem}

\vspace{2mm}

\textbf{2.} We used essentially the fact that the phase spaces $\B^\ga$ with $\ga >0$ are fading.
Our results can be extended on wider class of phase spaces of $\l^r$ type defined by
\begin{equation*}
\B^{r,\ga} :=\{ \vphi = \col ( \vphi^{[m]} )_{m=-\infty}^{0}  \ :
\ | \vphi |_{\B^{r,\ga}} := 
\left( \sum_{m=-\infty}^{0} |e^{\ga m} \vphi^{[m]} |^r  \right)^{1/r} \!\! < \infty \}, 
\quad 1 \leq r < \infty .
\end{equation*}
If $\ga >0$, the phase spaces $\B^{r,\ga}$ are fading. We show below that our method works for
this class.

\begin{theorem}
Let $1\leq r < \infty$ and $\ga >0$. Then Theorems \ref{t UES} and \ref{t US} are valid with 
the phase space $\B=\B^{r,\ga}$ instead of $\B^\ga$.
\end{theorem}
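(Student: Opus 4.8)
The plan is to repeat the proofs of Theorems~\ref{t UES} and~\ref{t US} verbatim, replacing $\B^\ga$ by $\B^{r,\ga}$ throughout, and to verify that each auxiliary statement of Sections~\ref{s RedOrd} and~\ref{ss proofUES} survives the substitution (with possibly altered constants). Since $\B^{r,\ga}$ is again a Banach space, the reduced first-order system (\ref{e ndnh}) may be considered in $\W=\B^{r,\ga}$, so that Theorems~\ref{t ordUES} and~\ref{t ordUS} apply without change. The representation Proposition~\ref{p rep} is purely algebraic (it involves only $\Sh$, $\Pr_{\{0\}}$, $\E_0$, $L$, and $D$) and therefore carries over literally. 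Likewise Proposition~\ref{p GaL} and part~(i) of Proposition~\ref{p |Lnk|<} depend only on $(\l^p,\l^q)$-stability and the closed graph theorem, not on the particular phase-space norm, so they require no modification.

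Next I would record the norm identities that drive the method. A direct reindexing gives $|\Sh^j\vphi|_{\B^{r,\ga}}=e^{-\ga j}|\vphi|_{\B^{r,\ga}}$ (the analogue of (\ref{e |Sh|})), while $\I-\Pr_{\{0\}}$ is a contraction and the single-coordinate bound $|\vphi^{[m]}|\le e^{-\ga m}|\vphi|_{\B^{r,\ga}}$ continues to hold. From these, Proposition~\ref{p hlp} follows at once by applying the triangle inequality to (\ref{e reph}) term by term, $|\h(n)|_{\B^{r,\ga}}\le\sum_{k=0}^{n-1}e^{-((n-k)-1)\ga}|g(k)|_{\B^{r,\ga}}$, and then Young's inequality exactly as before, with the same constant $(1-e^{-\ga})^{-1}$; similarly the single-coordinate bound yields Proposition~\ref{p |Lnk|<}(ii) with the estimate (\ref{e supLP}) unchanged. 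The operator-norm formula (\ref{e |D|}) becomes $|D(n)\vphi|_{\B^{r,\ga}}^r=|L(n)\vphi|^r+e^{-r\ga}|\vphi|_{\B^{r,\ga}}^r$, so that $\max\{\|L(n)\|_{\B^{r,\ga}\to\X},e^{-\ga}\}\le\|D(n)\|\le\|L(n)\|_{\B^{r,\ga}\to\X}+e^{-\ga}$, which preserves the equivalence $\|L\|_\infty<\infty\Leftrightarrow\|D\|_\infty<\infty$ used at the end of Section~\ref{ss proofUES}. Finally, in Proposition~\ref{p UES} (and its $\nu=0$ version, Proposition~\ref{p US}) the maximum identity (\ref{e |phi|P0}) is replaced by the $\l^r$-additive identity $|\vphi|_{\B^{r,\ga}}^r=|\Pr_{\{0\}}\vphi|_{\B^{r,\ga}}^r+|(\I-\Pr_{\{0\}})\vphi|_{\B^{r,\ga}}^r$; rerunning the induction with the resulting power-$r$ recursion $|y(n)|^r=|x(n)|^r+e^{-r\ga}|y(n-1)|^r$ still closes provided the decay rate is chosen strictly below $\ga$, so that the geometric factor $e^{-r(\ga-\nu_1)}$ stays below $1$.

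I expect the one genuinely new point to be Proposition~\ref{p lp}, where the sup-norm telescoping of the $\B^\ga$ case is no longer available. For $x_0=\vz_\B$ one computes $|x_n|_{\B^{r,\ga}}^r=\sum_{l\le n}e^{r\ga(l-n)}|x(l)|^r=(w*u)(n)$, a discrete convolution with $w(j)=e^{-r\ga j}$ for $j\ge0$ and $u(l)=|x(l)|^r$, and the required bound $\|x_\bullet\|_q\le C\,\|x(\cdot)\|_q$ amounts to controlling the $\l^{q/r}$-type norm of $w*u$ by that of $u$. The hard part is that there is no a priori order relation between $q$ and $r$, so the argument splits: if $q\ge r$ one applies the discrete Young inequality with exponent $q/r\ge1$, obtaining the constant $(1-e^{-r\ga})^{-1/r}$; if $q<r$ one instead uses the subadditivity $(\sum_i a_i)^{q/r}\le\sum_i a_i^{q/r}$ valid for the exponent $q/r\le1$, together with summation of a geometric series, obtaining the constant $(1-e^{-q\ga})^{-1/q}$. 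In either case the constant is finite and plays the role of $C_2(q)$ from (\ref{e ||<||lp}), while the trivial bound $|x(n)|=|x_n^{[0]}|\le|x_n|_{\B^{r,\ga}}$ supplies the reverse inequality needed for the \emph{only if} direction. Inserting these estimates into the derivation of (\ref{e ylq}) reproduces the bound $\|y(\cdot,0,\vz_\B;g)\|_q\le C_1\|g\|_p$, and the proofs of Theorems~\ref{t UES} and~\ref{t US} then go through unchanged.
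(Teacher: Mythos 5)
Your proposal is correct and follows essentially the same route as the paper: rerun the proofs of Theorems \ref{t UES} and \ref{t US} in the norm of $\B^{r,\ga}$, checking that the algebraic pieces (Propositions \ref{p rep}, \ref{p GaL}, \ref{p |Lnk|<}) survive untouched and adjusting Propositions \ref{p hlp}, \ref{p UES}/\ref{p US}, and \ref{p lp}. The only noteworthy difference is that the paper dismisses the $\B^{r,\ga}$-analogue of Proposition \ref{p lp} as a ``minor change'' without proof, whereas you correctly identify it as the one place needing a new estimate and supply the full two-case argument ($q\ge r$ via Young's inequality, $q<r$ via subadditivity of $t\mapsto t^{q/r}$); your exact power-$r$ recursion in Propositions \ref{p UES} and \ref{p US} is an equivalent variant of the paper's triangle-inequality induction (which instead absorbs a polynomial factor by slightly decreasing the decay rate).
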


\begin{proof}
The general scheme of the proof of Theorems \ref{t UES} and \ref{t US} works with minor changes in
(\ref{e |phi|P0}), (\ref{e |D|}), (\ref{e ||<||linf}), (\ref{e supLP}), and more essential changes
in Proposition \ref{p US} and
in the proofs of Propositions \ref{p UES} and \ref{p hlp}.
We explain here only essential changes.

\textbf{Changes in the proof of Proposition \ref{p hlp}.}
Estimate (\ref{e est h(n)}) can be adjusted to the phase space $\B^{r,\ga}$ with the use of 
Minkowski's inequality:
\begin{eqnarray*}
| \h (n) |_{\B^{r,\ga}} & = & \left( \sum_{m \in \Z^-}
\left| \sum_{k=\max \{ 0, m+n \} }^{n-1} e^{(-n+k+1)\ga} 
\left[ e^{(m+n-k-1) \ga}  g^{[m+n-k-1]} (k) \right]  \right|^r \right)^{1/r}
\leq \\ & \leq &
 \sum_{k= 0}^{n-1} e^{(-n+k+1)\ga} | g (k) |_{\B^{r,\ga}} =
  (\e * \g_r) (n) ,
\end{eqnarray*}
where $\e (\cdot)$ is given by (\ref{e def g e}) and $\g_r (\cdot) := | g (\cdot) |_{\B^{r,\ga}} $.
The rest of the proof is the same.

\textbf{Changes in the proof of Proposition \ref{p UES}.} We use
$| \vphi |_{\B^{r,\ga}} \leq
|\Pr_{\{0\}} \vphi |_{\B^{r,\ga}} + | (\I - \Pr_{\{0\}}) \vphi |_{\B^{r,\ga}} $ instead of
(\ref{e |phi|P0}). Then induction easily produce the inequality
\[
| y(n,\tau, \vphi; \fz) |_{\B^{r,\ga}}
\leq (n-\tau+1) K e^{- \nu_1  (n-\tau) } | \vphi |_{\B^{r,\ga}} .
\]
Replacing $\nu_1$ with any $\nu_2 \in (0,\nu_1)$ and changing $(n-\tau+1) K$ to large enough 
constant $K_1$, one gets
UE stability in $\B^{r,\ga}$.

\textbf{For phase spaces $\B^{r,\ga}$ Proposition \ref{p US} is valid only when $\ga>0$, the proof
for this case requires the following changes.} We again use $| \vphi |_{\B^{r,\ga}} \leq
|\Pr_{\{0\}} \vphi |_{\B^{r,\ga}} + | (\I - \Pr_{\{0\}}) \vphi |_{\B^{r,\ga}} $ instead of
(\ref{e |phi|P0}) and get by induction the inequality
\[
| y(n,\tau, \vphi; \fz) |_{\B^{r,\ga}}
\leq K \left( \sum_{j=0}^{n-\tau} e^{-j\ga}\right)  | \vphi |_{\B^{r,\ga}} .
\]
This implies uniform stability in $\B^{r,\ga}$ with $K_1=K (1-e^{-\ga})^{-1}$.
\end{proof}

The non-fading memory spaces $\B^{r,0}$
appear naturally in the theory of Volterra difference systems (\ref{intro1}) when
the coefficients $L(n,k)$ satisfy condition of the $\l^{r'}$ type:
\[
\sum_{k =0}^{+\infty} \| L(n,k) \|_{\X \to \X}^{r'} < \infty \qquad 
\left(\text{or, for } r=1, \ \sup_{k \geq 0} \| L(n,k) \|_{\X \to \X}  \right) ,  \qquad n \in \Z^+
\]
(it is assumed that $1/r+1/r'=1$).
Note that $\B^{r,0}$ compose a scale between $\bigcup\limits_{\ga < 0} \B^\ga$ and $\B^0$ in 
the sense that the embedding
\begin{equation} \label{e emL}
 \B^\ga \subseteq \B^{r,0} \subseteq \B^0 , \quad 
 | \vphi |_{\B^{0}} \leq | \vphi |_{\B^{r,0}} \leq (1 - e^{\ga r})^{-1/r} | \vphi |_{\B^{\ga}}, 
 \quad 
 \ga < 0, \ 1 \leq r < \infty ,
\end{equation}
holds as well as the usual $\l^r$ embedding of spaces $\B^{r,0}$.

\begin{problem}
Find Bohl-Perron type criterions for phase spaces $\B^{r,0}$. This question widens frames of Problem 
\ref{prob US B0}.
\end{problem}

\textbf{3.}  Bohl-Perron type theorems were used in \cite[Sec. 5]{BB_JMAA05} to derive explicit
(i.e., given in the terms of coefficients) exponential stability tests for systems with bounded delay.
It is interesting to apply the method of \cite[Sec. 5]{BB_JMAA05} to Volterra difference systems with 
unbounded and infinite delay. This is a part of the following general problem.

\begin{problem}
Find explicit  tests of exponential stability (complementing the known ones)
for Volterra difference systems with unbounded or infinite delay.
\end{problem}

One of the motivations for this question is that exponential stability (and, more generally, dichotomy) 
is used in the study of bounded solutions to nonlinear perturbations of Volterra difference equations 
(see e.g. \cite{CC09,M97}).

On the other hand, there is not much literature devoted to explicit conditions of exponential stability
in the cases of unbounded and infinite delay.
For systems of convolution type
\begin{equation} \label{e Vconv}
x(n+1)=\sum_{k=0}^{n} B(n-k) x(k)
\end{equation}
some sufficient conditions can be derived from known results on asymptotical stability 
(see e.g. \cite{E05,KC-VT-M03}) with the use of \cite[Theorem 5]{EM96}. (\cite[Theorem 5]{EM96} 
gives a necessary and sufficient condition of exponential stability of (\ref{e Vconv}) under 
the assumption that (\ref{e Vconv}) is asymptotically stable.)

\centerline{\bf Acknowledgment}

The authors were partially supported by the Pacific Institute for
Mathematical Sciences and by NSERC research grant.

\end{document}